\documentclass[12pt,oneside,reqno,fleqn]{amsart}
\usepackage{amssymb}
\usepackage{bbm}
\usepackage{cases}
\usepackage{amsmath,mathtools}
\usepackage{graphicx}
\usepackage{mathrsfs}
\usepackage{stmaryrd}
\usepackage{color}
\usepackage{soul}
\usepackage[dvipsnames]{xcolor}
\usepackage{amsfonts}
\usepackage{amsmath,amssymb,amsthm}
\usepackage{thmtools}
\usepackage{libertine}
\usepackage[T1]{fontenc}
\usepackage[libertine,varbb]{newtxmath}
\usepackage[colorlinks,linkcolor=Red,citecolor=Red,urlcolor=Red]{hyperref}
\usepackage[shortlabels] {enumitem}
\usepackage[nameinlink,capitalize]{cleveref}
\usepackage[spacing=true,kerning=true,tracking=true,letterspace=50]{microtype}
\usepackage{orcidlink}
\usepackage{ulem}
\usepackage[]{todonotes}

\hypersetup{pdftitle={Martingale problem}, pdfauthor={Fabio Bugini, Michele Coghi, Khoa L\^e and Chengcheng Ling}}
\numberwithin{equation}{section}
\newcommand{\be}{\begin{eqnarray}}
\newcommand{\ee}{\end{eqnarray}}
\newcommand{\ce}{\begin{eqnarray*}}
\newcommand{\de}{\end{eqnarray*}}
\declaretheorem[name=Theorem,numberwithin=section]{theorem}
\declaretheorem[name=Lemma,sibling=theorem]{lemma}
\declaretheorem[name=Proposition,sibling=theorem]{proposition}
\declaretheorem[name=Corollary,sibling=theorem]{corollary}
\declaretheorem[name=Assumption,sibling=theorem,style=remark]{assumption}

\declaretheorem[name=Example,sibling=theorem,style=remark]{example}
\declaretheorem[name=Remark,sibling=theorem,style=remark]{remark}
\declaretheorem[name=Definition,sibling=theorem,style=remark]{definition}

\crefname{theorem}{Theorem}{Theorems}
\crefname{lemma}{Lemma}{Lemmas}
\crefname{proposition}{Proposition}{Propositions}
\crefname{corollary}{Corollary}{Corollaries}
\crefname{definition}{Definition}{Definitions}
\crefname{remark}{Remark}{Remarks}
\crefname{example}{Example}{Examples}
\crefname{condition}{Condition}{Conditions}
\crefname{eqn}{Equation}{Equations}
\crefname{assumption}{Assumption}{Assumptions}
\crefname{innercustomthm}{Condition}{Conditions}
\crefrangelabelformat{innercustomthm}{#3#1#4-#5#2#6}

\def\bb2{{\boldsymbol{2}}}

\def\={&\!\!=\!\!&}

\def\<{{\langle}}
\def\>{{\rangle}}

\def\bx{{\mathbf{x}}}

\newcommand{\I}{{\mathcal{I}}}

\def\min{{\mathord{{\rm min}}}}

\def\={&\!\!=\!\!&}
\def\bt{\begin{theorem}}
\def\et{\end{theorem}}
\def\bl{\begin{lemma}}
\def\el{\end{lemma}}
\def\br{\begin{remark}}
\def\er{\end{remark}}
\def\bd{\begin{definition}}
\def\ed{\end{definition}}
\def\bp{\begin{proposition}}
\def\ep{\end{proposition}}
\def\bc{\begin{corollary}}
\def\ec{\end{corollary}}
\def\bx{\begin{example}}
\def\ex{\end{example}}

\def\cB{{\mathcal B}}

\def\cF{{\mathcal F}}

\def\cM{{\mathcal M}}

\def\cN{{\mathcal N}}

\def\cP{{\mathcal P}}

\def\cS{{\mathcal S}}

\def\mC{{\mathbb C}}

\def\mE{{\mathbb E}}

\def\mN{{\mathbb N}}

\def\mP{{\mathbb P}}

\def\mR{{\mathbb R}}

\def\mZ{{\mathbb Z}}
\def\bP{{\mathbf P}}
\def\bE{{\mathbf E}}

\def\sN{{\mathscr N}}

\def\geq{\geqslant}
\def\leq{\leqslant}
\def\les{\lesssim}

\def\bP{{\mathbf P}}

\newcommand{\R}{{\mathbb R}}
\newcommand{\Rd}{{\R^d}}
\newcommand{\tand}{\quad\text{and}\quad}
\newcommand{\LL}{{\mathbb{L}}}
\newcommand{\LLb}{{\widebar\LL}}
\newcommand{\Lb}{{\widebar L}}

\newcommand{\1}{{\mathbf 1}}
\newcommand{\caa}{{\mathcal A}}
\newcommand{\sol}{{\mathbb{S}}}
\newcommand{\solb}{{\widebar\sol}}
\newcommand{\sone}{{\sol_{1,p}}}
\newcommand{\stwo}{{\sol_{2,p}}}

\newcommand{\ival}{{\mathcal{T}}}

\newcommand{\ky}{{\it KY }}
\newcommand{\norm}[1]{{\left\vert\kern-0.25ex\left\vert\kern-0.25ex\left\vert #1 
    \right\vert\kern-0.25ex\right\vert\kern-0.25ex\right\vert}}

\renewcommand{\le}{\leq}
\renewcommand{\ge}{\geq}
\allowdisplaybreaks
\begin{document}
    \title{Young Parabolic PDEs and the Martingale Problem for Young SDEs}
	\date{\today}
	
\author{Fabio Bugini, Michele Coghi,
Khoa L{\^e} 
\and Chengcheng Ling 
}
\address{Fabio Bugini: (formerly)
Technische Universit\"at Berlin, Institute of Mathematics,  10623 Berlin, Germany}
\address{Michele Coghi:
University of Trento, Department of Mathematics, 38123 Povo, Italy}
\email{michele.coghi@unitn.it}
\address{Khoa L{\^e}:
University of Leeds,
 School of Mathematics, Leeds LS29JT, U.K.}
\email{ k.le@leeds.ac.uk}
\address{Chengcheng Ling:
University of Augsburg, Institut für Mathematik, Universitätsstraße 14, 86159 Augsburg,
 Germany  }
 \email{ chengcheng.ling@uni-a.de}
	\begin{abstract}
   Under mild regularity assumptions, we prove that the martingale problem associated with the hybrid Young--Lyons--It\^o differential equation admits a unique solution in the natural class of solutions satisfying Krylov-type estimates, thereby establishing probabilistic weak well-posedness. Our proof relies on an analysis of the associated Kolmogorov equations, which are Young-type parabolic PDEs with irregular coefficients. The resulting theory for such Young-type parabolic PDEs is also of independent interest.
\bigskip
		
		\noindent {{\sc Mathematics Subject Classification (2020):}
		Primary 60H10; 
		Secondary
		60H50, 
		60L90, 
		35B65. 
		}

		\noindent{{\sc Keywords:} Singular rough SDEs;  regularization by noise; sewing; martingale problem; Young PDEs}
		
	\end{abstract}
	
	\maketitle
	
\section{Introduction}
It has long been recognized that the addition of stochastic terms to the basic governing equations
can be used to model an intrinsic presence of randomness as well as to account for other numerical, empirical or physical uncertainties.  Consequently, the field of stochastic 
equations has gained enormous importance over the past decades. Starting with It\^o integration theory, it is nowadays well-known that we can give a probabilistic meaning to problems that are analytically ill-posed due to the low regularity of trajectories of the driving stochastic processes. For It\^o-type equations, the pathwise strong solution theory characterizes solutions through explicit sample-path constructions but typically requires strong regularity of the coefficients; by contrast, the weak solution framework relaxes the requirement of constructing solutions on a fixed probability space by allowing both the process and the probability space to vary, thus permitting existence results under milder assumptions while still retaining a probabilistic construction; and the Stroock--Varadhan martingale problem formulation goes further by characterizing solutions in terms of the generator of the diffusion, focusing directly on the laws of processes rather than individual paths, which allows one to establish existence and uniqueness under minimal regularity conditions. These results are well established, and we refer to the books \cite{ok, ry, ka, SV} for details.   

However, it turns out that, in general, the It\^o solution map, which sends a trajectory of the driving signal to a trajectory of the solution, is not continuous; moreover, this methodology is restricted to the semimartingale/Markov setting. This loss
of robustness was overcome by the theory of rough paths introduced by Lyons \cite{TL}, which allows for a pathwise analysis of stochastic equations.  More precisely, Lyons singled out the appropriate
topology on the space of rough paths which renders the corresponding Itô–Lyons solution map
continuous as a function of a suitably enhanced driving path. In contrast to It\^o's theory, rough path theory allows one
to consider drivers beyond the martingale setting, such as general Gaussian or Markov processes \cite{FH}.

Recently, using stochastic sewing \cite{le20, le23}, the authors of \cite{FHL21} provided a unified framework for studying randomly perturbed ODEs which combines It\^o integration and Lyons' rough path theory. These equations are known as rough SDEs \cite{FHL21, BCN, BFS, FLZ} and in the Young regime, they take the form 
\begin{align}
    \label{eq:rough-SDE}
dX_t=b(t,X_t)dt+g(t,X_t)d\gamma_t+\sigma(t,X_t)dW_t
\end{align}
where $b:[0,T]\times\mathbb{R}^d\rightarrow\mathbb{R}^d$, $g:[0,T]\times\mR^d\rightarrow\mathbb{R}^{d\times d_1} $, $\gamma$ is a $d_1$-dimensional $\beta$-H\"older continuous path, $\sigma:[0,T]\times\mR^d\rightarrow\mathbb{R}^{d\times d_2} $ and $W$
is a standard $d_2$-dimensional Brownian motion. Here $\sigma(t,X_t)dW_t$ is understood in the sense of It\^o integration and 
$g(t,X_t)d\gamma_t$ is defined as a rough integral. In this work we  focus on the Young regime when $\beta > \frac{1}{2}$. For clarity, we assume without loss of generality that $d_1=1$ and $d_2=d$; our analysis, however, applies equally to general $d_1,d_2$.    

 Equations of the type \eqref{eq:rough-SDE} not only have their own theoretical interest from a stochastic analysis perspective, but also have a wide range of applications in fields such as stochastic control, filtering theory and mathematical finance, as discussed in \cite{FHL21,FLZ26}.

Concerning the solution theory, assuming more than Lipschitz regularity of $b$ and $\sigma$, \cite{GN, HL} show existence and uniqueness of strong solutions to \eqref{eq:rough-SDE} for $\beta>\frac{1}{2}$, where $\gamma$ is a fractional Brownian motion with Hurst index larger than $\frac{1}{2}$.  When $\beta > \tfrac{1}{3}$ and the driving signal $\gamma$ can be lifted to a rough path, it has been established in \cite{FHL21} that \eqref{eq:rough-SDE} admits a unique pathwise strong solution if the coefficients are (at least) Lipschitz continuous. A central ingredient of the analysis in \cite{FHL21} is the construction of a suitable metric that simultaneously captures the probabilistic and rough path structures of the driving noise, thereby enabling a fixed-point argument that yields existence and uniqueness of solutions. Since such rough SDEs still involve an Itô-type noise component, which is both Markovian and martingale in nature, it is natural to expect that a weak solution theory for these equations should also be viable. This observation forms the starting point of the present work.

Motivated by the recent advances in the theory of regularization by noise \cite{Zvo, KR, Z, FIR, GaGe, ALL, CD, DGLL}, one may anticipate that the coefficients of rough SDEs can be treated under weaker assumptions. A natural approach, paralleling the weak solution theory for Itô SDEs, is to study the associated Kolmogorov equation, which in this context becomes a rough PDE with singular coefficients. This perspective naturally reduces the problem to the analysis of such rough PDEs. Since our rough SDE is formulated in the Young regime---referred to as the hybrid Young–Lyons–Itô differential equation---the corresponding rough PDE also lies in the Young setting. Our approach to this analysis is discussed in detail in a later subsection.

 By exploiting the regularization effect of the It\^o-type noise, we establish {\it a martingale and weak solution theory for the Young–Lyons–Itô differential equation \eqref{eq:rough-SDE} under mild regularity conditions on the coefficients.} Compared with the existing literature on rough SDEs and rough PDEs, our conditions on $b$ and $g$ are much weaker (no Lipschitz or even continuity requirement on $b$, which may be unbounded and singular). In return, we work in the Young regime $\beta>\frac12$ and require the diffusion coefficient to be uniformly nondegenerate and H\"older continuous in space. Our main hypotheses and results are presented below, after we introduce the notation.
 \subsection{Notation}\label{sub:not}

On finite dimensional vector spaces we always use the Euclidean norm.

For $k\in\{1,\ldots,d\}$ and $f:\mathbb{R}^d\to \mathbb{R}$, we denote $\partial_k f(x):=\frac{\partial f(x)}{\partial x_k}$ for $x\in\mathbb{R}^d$ and $\nabla f(x):=(\partial_if(x))_{1\leq i\leq d}$; derivatives are understood in the weak sense.
For vector-valued $f$ we use the same notation, and
$\nabla^k f$ is defined via $\nabla(\nabla ^{k-1}f)$ iteratively. For a multi-index $k=(k_1,\ldots,k_d)\in\mN^d$, denote $\partial^k f(x):=\frac{\partial^{|k|} f(x)}{\partial x_1^{k_1}\cdots\partial x_d^{k_d}}$. If $k=(0,\ldots,0)$, we use the convention $\partial^kf=f$.

 For each $\nu\in\mZ_+$ and $p\in[1,\infty]$, $L_{\nu,p}(\Rd)
 $ is the usual Sobolev space on $\Rd$ equipped with the norm 
$\|f\|_{L_{\nu, p}(\mathbb{R}^d)}:=\sum_{k=0}^\nu\|\nabla^k f\|_{L_{ p}(\mathbb{R}^d)}$. For $T\in[0,\infty],q\in[1,\infty],$  $\LL^q_{\nu,p}([0,T])$ denotes the space of measurable functions $f:[0,T]\to L_{\nu,p}(\Rd)$ such that $\|f\|_{\LL^q_{\nu,p}([0,T])}$ is finite. Here, for each $s,t\in[0,T]$ satisfying $s\le t$, we denote
 \begin{align*}
  	\|f\|_{\LL^q_{\nu,p}([s,t])}:=\left(\int_s^t\|f(r,\cdot)\|_{L_{\nu,p}(\Rd)}^qdr\right)^{\frac1q}
 \end{align*}
 with the standard modification when $q=\infty$ or $p=\infty$. When $\nu=0$, we simply write $\LL^q_p([0,T])$ instead of $\LL^q_{0,p}([0,T])$.
 In particular, $\LL^q_p([0,T])$ contains Borel measurable functions $f:[0,T]\times\Rd\to\R$ such that $\int_0^T\left[\int_\Rd|f(t,x)|^pdx\right]^{q/p}dt$ is finite.
For $p\in(1,\infty)$ and $\nu\in\R$ we also denote by $L_{\nu,p}(\Rd):=(I-\Delta)^{-\nu/2}\big(L_p(\Rd)\big)$ the Bessel potential space with norm $\|f\|_{L_{\nu,p}(\Rd)}:=\|(I-\Delta)^{\nu/2}f\|_{L_p(\Rd)}$; for $\nu\in\mZ_+$ it coincides with the Sobolev space above, with an equivalent norm \cite{Triebel}. The spaces $\LL^q_{\nu,p}([0,T])$ are defined for real $\nu$ in the same way. We shall use the following standard facts \cite{Triebel}: $\partial_k:L_{\nu,p}\to L_{\nu-1,p}$ is bounded; $L_{\nu,p}\hookrightarrow L_{\nu',p}$ for $\nu'\le\nu$; $L_{\nu,p}\hookrightarrow C_b(\Rd)$ if $\nu>d/p$; the dual of $L_{\nu,p}$ is $L_{-\nu,p'}$ with $p'=\frac p{p-1}$; and $[L_{\nu_0,p},L_{\nu_1,p}]_\lambda=L_{(1-\lambda)\nu_0+\lambda\nu_1,p}$ (complex interpolation), so that in particular $\|f\|_{L_{(1-\lambda)\nu_0+\lambda\nu_1,p}}\lesssim \|f\|_{L_{\nu_0,p}}^{1-\lambda}\|f\|_{L_{\nu_1,p}}^{\lambda}$ for $\lambda\in[0,1]$.
Let $(\Delta_j)_{j\ge0}$ be the Littlewood--Paley blocks associated with a dyadic partition of unity in Fourier space \cite[Section 2.2]{BCD}, so that $\widehat{\Delta_jf}$ is supported in the ball of radius $\frac83$ for $j=0$ and in the annulus $\{\frac34 2^j\le|\xi|\le\frac83 2^j\}$ for $j\ge1$ (in the notation of \cite[Section 2.2]{BCD}, $\Delta_0=\chi(D)+\varphi(D)$ and $\Delta_j=\varphi(2^{-j}D)$ for $j\ge1$). For $\nu\in\R$ and $p\in(1,\infty)$, the Besov space $B^\nu_{p,\infty}=B^\nu_{p,\infty}(\Rd)$ consists of the tempered distributions $f$ with $\|f\|_{B^\nu_{p,\infty}}:=\sup_{j\ge0}2^{j\nu}\|\Delta_jf\|_{L_p}<\infty$. We use the following standard facts \cite{Triebel,BCD}: $L_{\nu+\delta,p}\hookrightarrow B^\nu_{p,\infty}\hookrightarrow L_{\nu-\delta,p}$ for $\delta>0$, $L_{k,p}\hookrightarrow B^k_{p,\infty}$ for $k\in\mZ_+$, and in particular $L_p\hookrightarrow B^0_{p,\infty}$ and $B^\nu_{p,\infty}\hookrightarrow L_p$ for $\nu>0$; $B^\nu_{p,\infty}\hookrightarrow C^{\nu-d/p}(\Rd)\subset C_b(\Rd)$ if $\nu-\frac dp\in(0,1)$; $\partial_k:B^\nu_{p,\infty}\to B^{\nu-1}_{p,\infty}$ is bounded; $\|f\|_{B^{(1-\lambda)\nu_0+\lambda\nu_1}_{p,\infty}}\le\|f\|_{B^{\nu_0}_{p,\infty}}^{1-\lambda}\|f\|_{B^{\nu_1}_{p,\infty}}^\lambda$ for $\lambda\in[0,1]$; and, for $\varrho\in(0,1)$, $\|m\|_{C^\varrho}\approx\sup_j2^{j\varrho}\|\Delta_jm\|_{L_\infty}$. Since $B^\nu_{p,\infty}$ is not separable, we use the following convention: a function $v:[0,T]\to B^\nu_{p,\infty}$ is called measurable if it is strongly measurable as a map into the separable space $B^{\nu-1}_{p,p}\supset B^\nu_{p,\infty}$; $L^\infty([0,T];B^\nu_{p,\infty})$ is the space of such measurable $v$ with $\sup_t\|v_t\|_{B^\nu_{p,\infty}}<\infty$; and time integrals are Bochner integrals in $B^{\nu-1}_{p,p}$. Since $\Delta_j$ commutes with these integrals, $\|\int v_rdr\|_{B^\nu_{p,\infty}}\le\int\|v_r\|_{B^\nu_{p,\infty}}dr$.
We  say that $f\in \LL^{1,q}_{\nu,p}([s,t])$ 
if $f\in\LL^q_{\nu,p}([s,t])$ and $\partial_tf\in\LL^q_{p}([s,t])$. We denote   $\|f\|_{\LL^{1,q}_{\nu,p}([s,t])}:=\|f\|_{\LL^{q}_{\nu,p}([s,t])}+\|\partial_tf\|_{\LL^{q}_{p}([s,t])}$.

 For $\alpha\in(0,1]$,  we set $C^\alpha(\mR^d)$ to be the space of continuous functions such that
\begin{align*}
  \Vert f\Vert_{C^\alpha}:=[f]_{C^\alpha}+\sup_{x\in\mR^d}|f(x)|:=\sup_{x,y\in\mR^d,x\neq y}\frac{|f(x)-f(y)|}{|x-y|^\alpha}+\sup_{x\in\mR^d}|f(x)|<\infty.
\end{align*}
Here, and often below, we write $C^\alpha$ instead of $C^\alpha(\mR^d)$ for simplicity.  
For $\alpha\in(0,\infty)$, we define ${C}^\alpha(\mR^d)$ as the space of all functions $f$ defined on $\mR^d$ having bounded derivatives $\partial^k f$ for multi-indices $k\in\mN^d$ with $|k|\leq \alpha$ so that
\begin{align*}
  \Vert f\Vert_{{C}^\alpha}&:=\|f\|_{{C}^{ \lfloor\alpha\rfloor}}+[f]_{{C}^\alpha}:=\sum_{|k|\leq \alpha}\sup_{x\in\mR^d}|\partial^kf(x)|+\sum_{\alpha-1\leq|k|<\alpha} [\partial^kf]_{{C}^{\alpha-|k|}}<\infty.
\end{align*}   
Analogously we can also define the $\alpha$-H\"older continuous path space $C^\alpha([0,T])$ for any $T\in(0,\infty)$ and the $E$-valued path space $C^\alpha([0,T];E)$, where $(E,\|\cdot\|_E)$ is a normed space.

%
Let $Q:[0,T]^2 \to \R^{d\times d}$ and define 
\begin{align*}
    p^Q_{s,t}(x)=(2\pi)^{-d/2}(\det Q_{s,t})^{-1/2}\exp\left(-\frac12x^\top Q_{s,t}^{-1}x \right)
\end{align*}
and $P^Q_{s,t}f(x)=\int_{\Rd}p^Q_{s,t}(x-y)f(y)dy $ for any measurable function $f$ on $\Rd$. 
Define  $p_t(x):=\frac{1}{\sqrt{2\pi t}^d}e^{-\frac{|x|^2}{2t}}$  and $P_tf(x):=p_t\ast f(x), t\geq0$. It is evident that when $Q_{s,t}=I(t-s) $, $p^Q_{s,t}(x)=p_{t-s}(x)$ and $P^Q_{s,t}=P_{t-s}$.  

We say that a sequence of functions $(f_n)_{n\in \mN}$ converges to a function $f$ in $C^{\alpha-}(\mR^d)$, $\alpha\in(0,\infty)$,  if $\sup_n\|f_n\|_{C^\alpha}<\infty$ and for any $\alpha'<\alpha$ we have $\|f_n-f\|_{C^{\alpha'}}\rightarrow0$ as $n\rightarrow\infty$. We note that such a sequence always exists given $f\in C^\alpha(\mR^d)$: one can simply take $f_n:=P_{1/n}f$. Convergence in $C^{\alpha-}([0,T])$ of paths is defined in the same way; for $\gamma\in C^\alpha([0,T])$, such a sequence of smooth paths is obtained by extending $\gamma$ constantly outside $[0,T]$ and mollifying. 

We adopt the Einstein summation convention over repeated indices.

The following are the main function spaces that we work with throughout the paper.
\begin{definition}\label{def.S}
    We denote 
\begin{align}
  \label{def:sol2p-m} &\sol_{2,p}=\LL^\infty_{2,p}([0,T])\cap C^{\frac{1}{2}}([0,T];L_{1,p}(\mR^d)),
  \\&\|\cdot\|_{ \sol_{2,p}}:=\max\{\|\cdot\|_{\LL^\infty_{2,p}([0,T])}, \|\cdot\|_{C^{\frac{1}{2}}([0,T];L_{1,p}(\mR^d))}\}
 \nonumber
  \\ &\sol_{1,p}=\LL^\infty_{1,p}([0,T])\cap C^{\frac{1}{2}}([0,T];L_{p}(\mR^d))
  , \label{def:sol1p-m}
  \\&\|\cdot\|_{ \sol_{1,p}}:=
  \max\{\|\cdot\|_{\LL^\infty_{1,p}([0,T])}, \|\cdot\|_{ C^{\frac{1}{2}}([0,T];L_{p}(\mR^d))}
  \}\nonumber
\end{align}
\end{definition}
By the continuous embeddings $L_{2,p}(\Rd)\hookrightarrow L_{1,p}(\Rd)\hookrightarrow L_{p}(\Rd)$, we have
\begin{align}\label{eq.Sembed}
    \stwo\hookrightarrow\sone.
\end{align}

For  a given interval $[0, T]$, $[0,T]_\leq^n$ denotes the $n$-th simplex of $[0, T]$.

The notation $a\lesssim b$ means that there exists $C>0$ such that $a\leq C b$, where $C$ depends only on the parameters claimed in the corresponding statement. If the constant depends on any further parameter $c$, we incorporate it in the notation by writing $a\lesssim_c b$.

\subsection{Main Results}

The following conditions on the coefficients of \eqref{eq:rough-SDE} and of its Kolmogorov equation are imposed throughout the paper.  
\begin{assumption}
  \label{ass:main}
  We assume for $p\in(d,\infty)$, 
  \begin{itemize}
    \item[$(H^b)$]   $b\in \LL_q^\infty([0,T])$  for $q\in[p,\infty]$;
    \item[$(H^g)$] $g:[0,T]\times\mathbb{R}^d\rightarrow\mathbb{R}^{d}$ and  $g\in \sol_{1,p}$;
    \item[$(H^\sigma)$] 
    \begin{itemize}
      \item[(i)] $\sigma$ is uniformly bounded and nondegenerate. Namely, there exists a constant $K$ such that
\[
K^{-1} |\xi|^2 \leq |\sigma^\ast (t,x) \xi|^2 \leq K|\xi|^2 \quad
\forall\,\xi\in\R^{d},\,(t,x)\in [0,T] \times \R^d,
\] 
\item[(ii)] $\sigma$ is uniformly continuous on $[0,T]\times\mR^{d}$, and H\"older continuous in space, uniformly in time: for some $\alpha\in(0,1]$, $|\sigma(t,x)-\sigma(t,y)|\le K|x-y|^\alpha$ for all $t\in[0,T]$, $x,y\in\Rd$.
    \end{itemize}
  \end{itemize}
\end{assumption}
We summarize our main results here; the detailed statements are \cref{thm:existence}, \cref{thm:unique} and \cref{thm:Mark} in \cref{sec:mar-weak}, where the proofs are also given.
\begin{theorem}
    \label{thm:main-intro}
        Assume \cref{ass:main} holds and $\gamma\in C^\beta([0,T])$ with $\beta\in(\frac12,1)$. Then, for every starting point, the martingale problem associated with the Young--Lyons--It\^o differential equation \eqref{eq:rough-SDE} has exactly one solution in the class of solutions satisfying the Krylov--Young estimates of \cref{lem:Y-kry}. It is the limit of the laws of the solutions to the equations with mollified $\gamma$. Weak solutions satisfying these estimates exist and are unique in law, and their law is the above martingale solution. Moreover, the space-time process $(t,X_t)$ associated with the unique solution is a time-homogeneous Markov process.
\end{theorem}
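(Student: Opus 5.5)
The plan follows the Stroock--Varadhan scheme, with the classical Kolmogorov equation replaced by a Young-type parabolic PDE. For $\phi\in C_c^\infty(\Rd)$ and terminal time $t\le T$, we consider the backward equation
\begin{align*}
\partial_s u+\tfrac12 a^{ij}\partial_{ij}u+b\cdot\nabla u+ g\cdot\nabla u\,\dot\gamma_s=0,\qquad u(t)=\phi,\quad a=\sigma\sigma^*,
\end{align*}
understood in integral form, with the $d\gamma$ term defined as a Young integral in time with values in $L_{1,p}$. The target is existence and uniqueness of a solution in $\stwo$ (on $[0,t]$), together with the a priori bound
\begin{align*}
\|u\|_{\stwo}\lesssim \|\phi\|_{L_{2,p}}+\ldots,
\end{align*}
and stability under $\gamma^n\to\gamma$ in $C^{\beta-}$ and under mollification of $b,g,\sigma$. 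These bounds should be uniform in the mollification.

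The key product estimate is that $g\in\sone$ and $\nabla u\in C^{1/2}([0,T];L_p)\cap\LL^\infty_{1,p}$. Since $p>d$, $L_{1,p}$ is an algebra-type multiplier space, so $g\cdot\nabla u$ is $\tfrac12$-Hölder in $L_p$. Hence $\tfrac12+\beta>1$ allows us to sew $\int g\cdot\nabla u\,d\gamma$ in $L_p$, and in fact in $L_{1-\epsilon,p}$ by interpolation. The terms are then handled as follows:
\begin{itemize}
\item the leading part is handled by freezing coefficients and applying Krylov's $L_p$-theory, using the Hölder continuity in $(H^\sigma)$;
\item the drift is absorbed via $b\in\LL^\infty_q$ with $q\ge p>d$, since $\|b\nabla u\|_{L_p}\lesssim\|b\|_{L_q}\|u\|_{L_{2-\epsilon,p}}$;
\item the Young term is closed by smallness on short intervals, since $|t-s|^{\beta+1/2-1}$ is small.
\end{itemize}
Iterating over a partition of $[0,t]$ then gives global well-posedness of the PDE.

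Next, existence. Mollify $\gamma$ to get $\gamma^n$, and mollify $b,g$. For the resulting classical SDE with drift $b+g\dot\gamma^n$ (weak solutions exist by Krylov/Zvonkin theory), we prove the Krylov--Young estimates of \cref{lem:Y-kry} uniformly in $n$. These estimates control
\begin{align*}
\mE\int_s^t f(r,X_r)\,dr \quad\text{and}\quad \mE\int_s^t h(r,X_r)\,d\gamma_r
\end{align*}
by $\LL_p$- and $\sone$-norms. They yield tightness, via moment bounds on increments $\mE|X_t-X_s|^m\lesssim|t-s|^{m/2}$, and they pass to the limit. The limit law then solves the martingale problem: by Itô's formula for the smooth approximations,
\begin{align*}
\phi(X_t)-\int_0^t\Big(\tfrac12 a^{ij}\partial_{ij}\phi+b\cdot\nabla\phi\Big)dr-\int_0^t g\cdot\nabla\phi\, d\gamma_r
\end{align*}
is a martingale. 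The singular terms converge thanks to the Krylov estimates, which allow us to replace $b,g$ by smooth approximations with errors uniform in $n$, and thanks to the stability of Young integrals.

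For uniqueness, let $P$ be any martingale solution satisfying the Krylov--Young estimates. We apply the martingale property to $u(s,X_s)$, with $u$ the PDE solution. Since $u$ is not $C^{1,2}$, we justify this by approximating $u$ with PDE solutions $u^n$ for smooth data and coefficients, and using the Krylov--Young estimates to control the remainders. This approximation step is the main obstacle. It requires the error terms
\begin{align*}
(b-b^n)\cdot\nabla u^n,\qquad \int (g\cdot\nabla u^n-g^n\cdot\nabla u^n)\,d\gamma
\end{align*}
to be estimated in expectation through \emph{only} the a priori Krylov--Young class. I would first try to prove a Young-type Itô formula for $u\in\stwo$, using stochastic sewing for the germ $u(s,X_s)$. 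Once the approximation is justified, $\mE_P\phi(X_t)=u(0,x)$, so the one-dimensional marginals are determined. Iterating with conditional laws, in the Stroock--Varadhan way, determines the finite-dimensional distributions, which gives uniqueness. The same argument gives uniqueness in law of weak solutions, and identifies the law as the limit of the mollified laws, so the whole sequence converges.

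Finally, the Markov property. Well-posedness holds for every initial time and point $(s,x)$. Measurability of $(s,x)\mapsto P_{s,x}$ follows from the continuity of $u$. Regular conditional probabilities of a solution, restarted at time $s$, again solve the martingale problem and satisfy the Krylov--Young estimates almost surely, so they equal $P_{s,X_s}$ by uniqueness. Hence $(t,X_t)$ is time-homogeneous Markov.
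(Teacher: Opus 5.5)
\textbf{There is a genuine gap: the PDE estimate you build everything on cannot close in $\stwo$.} Your overall architecture matches the paper's: a Young Kolmogorov equation, mollified $\gamma$, uniform Krylov--Young bounds, tightness, uniqueness through a PDE representation, and Markov. The failure is in the target space $\stwo=\LL^\infty_{2,p}\cap C^{\frac12}([0,T];L_{1,p})$.

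Two terms enter as $dt$-forcings in $\LL^\infty_p$ and nothing better:
\begin{itemize}
\item the drift $b\cdot\nabla u$, since $b\in\LL^\infty_q$;
\item the freezing commutator $(a(\cdot,z)-a)^{ij}\partial^2_{ij}u$.
\end{itemize}
Closing the bound therefore needs $\|\int_s^tP_{s,r}h_r\,dr\|_{L_{2,p}}\lesssim\sup_r\|h_r\|_{L_p}$. This maximal regularity in $L^\infty$ in time is false; see \cref{rem:why-besov} for a Littlewood--Paley counterexample, and Baillon's theorem.

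Krylov's $L_p$-theory does not rescue this. It gives only $L_q$-in-time bounds, which control neither $\sup_t\|u_t\|_{L_{2,p}}$ nor the $\frac12$-H\"older control of $\nabla u$ in $L_p$ built into $\stwo$. Dropping to $L_{2-\epsilon,p}$ does not help either, because the same loss of derivatives recurs at every level.

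The missing idea is to change scale to Besov spaces. In $B^{2-\epsilon}_{p,\infty}$ one does have $\|\int_s^tP_{s,r}h_rdr\|_{B^{2-\epsilon}_{p,\infty}}\lesssim\sup_r\|h_r\|_{B^{-\epsilon}_{p,\infty}}$. In this scale:
\begin{itemize}
\item the Young convolution $\int_s^tP_{s,r}(g\cdot\nabla u_r)d\gamma_r$ gains one derivative, from $L^\infty B^{1-\epsilon}_{p,\infty}\cap C^{\frac12}B^{-\epsilon}_{p,\infty}$;
\item the frozen commutator is made small in $B^{-\epsilon}_{p,\infty}$ by a pointwise-multiplier estimate, which is where $\alpha>\epsilon$ is used;
\item the localisation is carried out in the parametrised spaces $B^\nu_{p,\infty}(L_p(dz))$, so the frozen pieces can be recombined.
\end{itemize}

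\textbf{Uniqueness step.} In the Besov scale, $\partial^2u$ is only a distribution in $B^0_{p,\infty}$, which is why the step you leave open is delicate. Your error list is also incomplete. If you smooth $a$ to make $u^n$ admissible, the term $(a-a^n)^{ij}\partial^2_{ij}u^n$ appears, and the KY class cannot control it without uniform $L_p$ bounds on $\nabla^2u^n$, which are unavailable.

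The paper smooths no coefficient here. It works as follows:
\begin{itemize}
\item $u^n$ solves \eqref{eq:PDE-0-rep} with $\gamma^n$ in place of $\gamma$ and the original $a,b,g$, so $\partial_tu^n+\frac12a^{ij}\partial^2_{ij}u^n\in\LL^\infty_p$.
\item The martingale property is extended to such functions in \cref{lem:MP-Sobolev}, by mollifying in $x$ only. This uses the commutator bound $\|[a^{ij},\varphi_m]\partial^2_{ij}u\|_{L_p}\lesssim m^{-\alpha}\log(1+m)\|u\|_{B^2_{p,\infty}}$ and the occupation density supplied by the KY property.
\item The only remaining error is $\int\rho_n\,d\gamma^n-D(\rho_n)$, which the KY bound controls by $\|\gamma^n-\gamma\|_{C^{\beta_0}}$. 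Stability then gives $u^n\to u$ uniformly.
\end{itemize}
This yields $\bE^\bP(\phi(w_{t_0})|\cB_r)=u(r,w_r)$ directly, as in \cref{lem:rep}. From it, uniqueness of finite-dimensional distributions follows by induction, and the Markov property follows too. No regular-conditional-probability argument is needed; your version of that argument would require showing the KY class is stable under conditioning, which you only assert.

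\textbf{Uniform KY bound.} You assert this rather than derive it. It comes from It\^o's formula applied to the Kolmogorov equation with Young forcing $\rho\,d\Upsilon$ and zero terminal value. The ingredients are a bound $u\in C^\theta([0,T];C_b)$ with $\theta>\frac12$, and the decay of $\nabla u$ near the terminal time, which controls the martingale part.
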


\subsection{Idea of the analysis and organization}
Heuristically, we would like to use the regularization from the diffusion $\int_0^t\sigma(s,X_s)dW_s$ to reduce the regularity requirement from \cite{FHL21}. In the classical SDE setting, this is done by studying the corresponding Kolmogorov equation \cite{SV}. Here the Kolmogorov equation of \eqref{eq:rough-SDE} contains a Young differential in time:
 \begin{align}
    \label{main-rPDE}
    d_t u+\Big(\frac{1}{2}a^{ij}\partial^2_{ij}u+b\cdot \nabla u\Big)dt+(g\cdot \nabla u) d \gamma_t=fdt \text{ on } [0,T]\times\mR^d, \quad u(T)=\phi,
\end{align}
where $a=\sigma\sigma^\top$.
Since $\gamma$ is not differentiable, \eqref{main-rPDE} does not fall into the classical $L_p$-theory of parabolic equations \cite{KH, KS, LSU, XXZZ}. Even for constant $a$ and smooth $\gamma$, $u_t=P_{T-t}u_T+\int_t^T P_{r-t}fdr+\int_t^T P_{r-t}g_r d \gamma_r$, and maximal regularity estimates, well understood for the Lebesgue integral $\int_t^T P_{r-t}fdr$, are much less developed for the Young integral $\int_t^T P_{r-t}g_r d \gamma_r$. We therefore first develop a solution theory for \eqref{main-rPDE} (\cref{sec:PDE}), and then show the well posedness of the martingale problem through it (\cref{sec:tightness,sec:mar-weak}).\\

{\it Solution theory for Young PDE.} If $a=a(t)$ does not depend on $x$ and $b=0$, a solution of \eqref{main-rPDE} satisfies the mild formulation
\begin{align*}
    P^a_{s,t}u_t-u_s=-\int_s^tP^a_{s,r}(g_r\cdot\nabla u_r)d\gamma_r+\int_s^tP^a_{s,r}f_rdr,
\end{align*}
where $P^a$ is the Gaussian semigroup generated by $\frac12a^{ij}\partial^2_{ij}$. The Young integral is constructed by the sewing lemma with the germ $A_{r,r'}=P^a_{s,r}(g_r\cdot\nabla u_r)(\gamma_{r'}-\gamma_r)$. The smoothing of $P^a$ compensates the roughness of $\gamma$: if $g\cdot\nabla u$ is bounded in time with values in $B^1_{p,\infty}$ and $\frac12$-H\"older in time with values in $B^0_{p,\infty}$, the Young integral gains one spatial derivative (\cref{lem:youngPdg}). This closes an a priori estimate for $u$ in the space $\sol^0_{2,p}$ of \cref{def:Se}, i.e.\ $u$ bounded in time with values in $B^2_{p,\infty}$ and $\frac12$-H\"older in time with values in $B^1_{p,\infty}$. The Besov scale is essential: the $dt$-part $\int P^a f dr$ does not map $\LL^\infty_p$ into $\LL^\infty_{2,p}$, but it does map it into $L^\infty_tB^2_{p,\infty}$ (see \cref{rem:why-besov}). General coefficients $a(t,x)$ are treated by freezing them at a point $z$; since $\partial^2u$ is only a distribution in $B^0_{p,\infty}$, the commutator is controlled by a pointwise multiplier estimate, which is where the H\"older continuity of $a$ in space enters. The drift $b$ is a lower-order perturbation (\cref{Thm:est-roughPDE,thm:solution-YPDE}).\\

{\it The martingale problem.} Following the theory of singular It\^o-type equations \cite{KR, ZZ18, Zhangzhao, XXZZ}, we single out the solutions satisfying Krylov-type estimates adapted to the Young term, namely
\begin{align}\label{est:in-Kry}
   \mE\Big(\int_s^t|f(r,X_r)|dr+\big|\int_{s}^{t}\rho(r,X_r)d\Upsilon_r\big|\,\Big|\,\cF_s\Big)\lesssim |t-s|^\theta\big(\|f\|_{\LL^\infty_p}+\Vert \Upsilon\Vert_{C^{\beta_0}} \Vert \rho\Vert_{\sol^0_{1,p}}\big)
\end{align}
for smooth paths $\Upsilon$ and some $\theta>\frac12$, where $\beta_0\in(\frac12,\beta)$ is fixed in \eqref{def:beta-eps} (see \cref{lem:Y-kry}). This complements the classical Krylov estimate concerning only the first integral in \eqref{est:in-Kry}.
The solutions of the equations driven by smooth approximations $\gamma^n$ of $\gamma$ satisfy \eqref{est:in-Kry} uniformly in $n$: by It\^o's formula the left-hand side is expressed through the Kolmogorov equation with data $fdt+\rho d\Upsilon$, which is controlled by the Young PDE theory. A tightness argument then gives existence of martingale and weak solutions in this class (\cref{sec:tightness,sec:mar-weak}). For uniqueness, we show that every martingale solution $\bP$ satisfying \eqref{est:in-Kry} has the conditional representation $\bE^\bP(\phi(w_t)|\cB_r)=u(r,w_r)$, where $u$ solves \eqref{main-rPDE} with $f=0$ and $u(t)=\phi$, by applying the martingale problem to the equations with $\gamma^n$ in place of $\gamma$ and controlling the error by \eqref{est:in-Kry}. The representation determines the finite-dimensional distributions and yields the Markov property. Thus \eqref{est:in-Kry} identifies the class within which uniqueness holds, similarly to \cite{Zhangzhao, GP, HZ}; here, however, the singularity comes from the time variable through $\gamma$ rather than from the space variable.

\subsection{Literature discussion}
Starting from \cite{FHL21}, which first established well-posedness of rough SDEs, there has been a growing literature on the subject \cite{BCN, BFS, FLZ, BFS1}, including the McKean--Vlasov setting \cite{FHL25}. The work \cite{BCN} studies distributional/density properties of the solutions constructed in \cite{FHL21}, and \cite{BFS, BFS1} explore the associated Kolmogorov and Fokker--Planck equations and Feynman--Kac representations. All of these results require coefficients that are at least Lipschitz continuous, and solutions are mostly understood in the pathwise strong sense.

Rough path theory also applies in infinite dimensions, but early applications to rough PDEs were limited to mild formulations \cite{GT, DGT}, Feynman--Kac formulas, or transformations that absorb the rough dependence into the flow of characteristics \cite{CF}, with proof strategies far from classical PDE methods \cite{LSU,KH,KS}. Intrinsic formulations of rough PDEs and a priori estimates with a rough Gronwall lemma \cite{GTT, CR, BG, DFS, HH} later allowed existence and uniqueness results bypassing these transformations; see also \cite{LS} for rough PDEs with transport noise. Despite restricting ourselves to the Young regime, \eqref{main-rPDE} is not covered by any of these theories. In a similar spirit to \cite{HH}, we develop an approach to linear parabolic rough PDEs with possibly discontinuous coefficients, so that the assumptions on the standard part are comparable with the classical PDE theory \cite{KH,KS,LSU}. Going below the threshold $\beta=\frac{1}{2}$ raises highly non-trivial issues, which we leave for future work.
\section{Young parabolic PDEs}\label{sec:PDE}
\subsection{Standing parameters and solution spaces}
Throughout this section we fix $T>0$, $p\in(d,\infty)$ and $\beta\in(\frac12,1)$, and we let $\gamma,\bar\gamma\in C^\beta([0,T])$. Let $\alpha\in(0,1]$ be the H\"older exponent in \cref{ass:main-PDE} below, put
\begin{align}\label{def:eps0}
    \epsilon_0:=\min\Big\{\alpha,\ 1-\frac dp\Big\},
\end{align}
and fix $\epsilon\in[0,\epsilon_0)$ and $q_\epsilon:=\frac2\epsilon\in(\frac2{\epsilon_0},\infty]$, with $q_0=\infty$. The main case is $\epsilon=0$: all results of \cref{sec:mar-weak} use only $\epsilon=0$. A small $\epsilon>0$ is used only in \cref{sec:tightness}, to obtain Krylov estimates for test functions which are merely integrable in time (\cref{lem:Krylov}). Unless stated otherwise, the constants in this section may also depend on $\epsilon$.

\begin{remark}\label{rem:why-besov}
    It would be natural to look for solutions in $\LL^\infty_{2,p}([0,T])$. This is not possible: the heat semigroup does not have maximal regularity in $L^\infty$ in time, i.e.\ $\|\int_\cdot^T P_{r-\cdot}h_rdr\|_{\LL^\infty_{2,p}}\lesssim\|h\|_{\LL^\infty_p}$ fails for $p\in(1,\infty)$. Indeed, let $\psi$ be a Schwartz function whose Fourier transform is supported in the unit ball, put $\varphi_k(x):=\cos(2^kx_1)\psi(x)$, and take $h_r=\varphi_k$ for $r\in[4^{-k},2\cdot4^{-k})$, $k=1,\ldots,n$, and $h_r=0$ otherwise. Then $\sup_r\|h_r\|_{L_p}\lesssim1$, while $\Delta\int_0^TP_rh_rdr=2\sum_{k\le n}(P_{2\cdot4^{-k}}-P_{4^{-k}})\varphi_k$, whose $L_p$-norm is of order $\sqrt n$ by the Littlewood--Paley inequality. This reflects a general phenomenon: by a theorem of Baillon \cite{Baillon}, maximal regularity in $C([0,T];X)$ forces the generator to be bounded unless $X$ contains a copy of $c_0$. Maximal regularity does hold, however, in the Besov space $B^2_{p,\infty}$ (see \cref{lem:semi-group-reg} (iv)), which is the scale we work with.
\end{remark}

\begin{definition}\label{def:Se}
    We denote
    \begin{align}
        &\sol^\epsilon_{1,p}:=L^\infty([0,T];B^{1-\epsilon}_{p,\infty})\cap C^{\frac12}([0,T];B^{-\epsilon}_{p,\infty}),\label{def:Se1}
        \\&\sol^\epsilon_{2,p}:=L^\infty([0,T];B^{2-\epsilon}_{p,\infty})\cap C^{\frac12}([0,T];B^{1-\epsilon}_{p,\infty}),\label{def:Se2}
        \\&\mathbb D^\epsilon:=L^\infty([0,T];B^{-\epsilon}_{p,\infty})+L^{q_\epsilon}([0,T];L_p),\label{def:De}
    \end{align}
    with the norms
    \begin{align*}
        &\|f\|_{\sol^\epsilon_{1,p}}:=\max\{\|f\|_{L^\infty([0,T];B^{1-\epsilon}_{p,\infty})},\|f\|_{C^{\frac12}([0,T];B^{-\epsilon}_{p,\infty})}\},
        \\&\|u\|_{\sol^\epsilon_{2,p}}:=\max\{\|u\|_{L^\infty([0,T];B^{2-\epsilon}_{p,\infty})},\|u\|_{C^{\frac12}([0,T];B^{1-\epsilon}_{p,\infty})}\},
    \end{align*}
    and with the usual norm $\|h\|_{\mathbb D^\epsilon}:=\inf\{\|h_1\|_{L^\infty B^{-\epsilon}_{p,\infty}}+\|h_2\|_{\LL^{q_\epsilon}_p}:h=h_1+h_2\}$ on the sum space. For a subinterval $J\subset[0,T]$ the spaces $\sol^\epsilon_{k,p}(J)$, $\mathbb D^\epsilon(J)$ are defined in the same way.
\end{definition}
Since $L_{k,p}\hookrightarrow B^k_{p,\infty}$, we have $\sol_{k,p}\hookrightarrow\sol^0_{k,p}\hookrightarrow\sol^\epsilon_{k,p}$ for $k=1,2$, and $\LL^\infty_p\hookrightarrow\mathbb D^0\hookrightarrow\mathbb D^\epsilon$. Moreover $\|\nabla u\|_{\sol^\epsilon_{1,p}}\lesssim\|u\|_{\sol^\epsilon_{2,p}}$. Since $1-\epsilon>\frac dp$, $B^{1-\epsilon}_{p,\infty}\hookrightarrow L_p\cap C_b(\Rd)$, so that for $f\in\sol^\epsilon_{1,p}$,
\begin{align}\label{eq:S1e-emb}
    \sup_{t\in[0,T]}\big(\|f_t\|_{L_p}+\|f_t\|_{L_\infty}\big)\lesssim\|f\|_{\sol^\epsilon_{1,p}}.
\end{align}

We consider the following equation
\begin{align}
    \label{main-PDE}
   d_t u+(\frac{1}{2}a^{ij}\partial^2_{ij}u+b\cdot \nabla u)dt+(g\cdot \nabla u) d \gamma_t=fdt +\rho d\bar\gamma_t\text{ on } [0,T]\times\mR^d,
\end{align}
where $f\in\mathbb D^\epsilon$ and $\rho\in\sol^\epsilon_{1,p}$.
\begin{assumption}
  \label{ass:main-PDE}
  We assume:
  \begin{itemize}
    \item[$(H^b)$]   $b\in \LL_q^\infty([0,T])$ for some $q\in[p,\infty]$;
    \item[$(H^g)$] $g:[0,T]\times\mathbb{R}^d\rightarrow\mathbb{R}^{d}$ and  $g\in \sol_{1,p}$;
    \item[$(H_1^a)$] There exists a constant $K\ge1$ such that $(a^{ij})_{1\leq i,j\leq d}$ is symmetric, uniformly bounded and nondegenerate, namely
\[
K^{-1} |\xi|^2 \leq \langle a (t,x) \xi,\xi\rangle \leq K|\xi|^2 \quad
\forall\,\xi\in\R^d,\,(t,x)\in [0,T] \times \R^d;
\]
	\item[$(H_2^a)$]  $a$ is H\"older continuous in space, uniformly in time: for some $\alpha\in(0,1]$,
\[
|a(t,x)-a(t,y)|\leq K|x-y|^\alpha \quad \forall (t,x,y)\in [0,T]\times \R^{2d}.
\]
  \end{itemize}
\end{assumption}
For later use we note that, if $1/q'=1/p-1/q$, then $q'\in[p,\infty]$ and, if $\nabla v\in B^{1-\epsilon}_{p,\infty}\hookrightarrow L_p\cap L_\infty$, then
 \begin{align*}
     \|\nabla v\|_{L_{q'}}\le\|\nabla v\|_{L_p}^{p/q'}\|\nabla v\|_{L_\infty}^{1-p/q'}\lesssim\|v\|_{B^{2-\epsilon}_{p,\infty}},
 \end{align*}
 so that for every interval $J\subset[0,T]$,
\begin{align}\label{est:bnablau}
    \|b\cdot\nabla v\|_{\LL^\infty_p(J)}\lesssim\|b\|_{\LL^\infty_q}\|v\|_{L^\infty(J;B^{2-\epsilon}_{p,\infty})}.
\end{align}
Moreover, since $\alpha>\epsilon$, multiplication by $a^{ij}(t,\cdot)$ is bounded on $B^{-\epsilon}_{p,\infty}$ by \cref{lem.prodS} (i) below, so that $a^{ij}\partial^2_{ij}u\in L^\infty([0,T];B^{-\epsilon}_{p,\infty})$ for $u\in\sol^\epsilon_{2,p}$.

\begin{definition}\label{def:solYPDE}
Let $\epsilon\in[0,\epsilon_0)$. We say $u$ is an (analytically strong) \emph{$\epsilon$-solution} to \eqref{main-PDE} if $u\in \sol^\epsilon_{2,p}$ and for every $s\le t$, as an identity in $B^{-1-\epsilon}_{p,\infty}$,
\begin{multline}
    \label{id.intsolution-PDE}
   u_t-u_s+ \int_s^t\big(\frac{1}{2}a^{ij}\partial^2_{ij}u_r+b\cdot\nabla u_r\big)dr+\int_s^t(g\cdot\nabla u_r)d\gamma_r
   =\int_s^t\rho_rd\bar\gamma_r+ \int_s^t f_rdr.
\end{multline}
\end{definition}
\begin{remark}\label{rem:eps-sol}
If $\epsilon'<\epsilon$, every $\epsilon'$-solution is an $\epsilon$-solution. Under \cref{ass:main-PDE}, $\epsilon$-solutions with the same terminal value are unique (\cref{thm:solution-YPDE}), so when the data lie in the $\epsilon'$-spaces the $\epsilon$-solution is the $\epsilon'$-solution. Below, ``solution'' means $\epsilon$-solution, with $\epsilon$ the parameter fixed in the statement at hand.
\end{remark}
The $dr$-integrals in \eqref{id.intsolution-PDE} are understood according to the convention for Besov-valued integrals in \cref{sub:not}. The integrals $\int(g\cdot \nabla u_r)d\gamma_r$ and $\int\rho_r d\bar\gamma_r$ are $B^{-\epsilon}_{p,\infty}$-valued Young integrals, i.e.\ the limits in $B^{-\epsilon}_{p,\infty}$ of the Riemann sums
\begin{align*}
    \sum_{[s,t]\in\pi} g(s) \cdot\nabla u(s)\gamma_{s,t}
    \tand
    \sum_{[s,t]\in\pi} \rho(s)\bar\gamma_{s,t}
\end{align*}
respectively as the mesh size of the partition $\pi$ vanishes. They exist by \cref{lem:youngPdg,lem.prodS} below, since $g\cdot\nabla u\in\sol^\epsilon_{1,p}$. When $\epsilon=0$ and $f\in\LL^\infty_p$, all terms in \eqref{id.intsolution-PDE} except $\int a^{ij}\partial^2_{ij}u_rdr$ are $L_p$-valued.

\subsection{Young integration in Banach spaces}
 Let $\ival$ be a connected interval and  $E$ be a Banach space.
\begin{definition}\label{def.youngint}
Let $A: \ival_\leq^2 \rightarrow E$. Let $(s,t)\in\ival^2_\le$ and $\Pi$ be a finite partition of $[s,t]$. If the Riemann sum $\sum_{[u,v]\in \Pi} A_{u,v}$ converges as the mesh size of $\Pi$ vanishes, we denote the limit by $(\I A)_{s,t}$. If $(\I A)_{s,t}$ exists for each $(s,t)\in\ival^2_\le$, we say that $A$ is (Riemann) integrable on $\ival$.

If $A$ has the form $A_{s,t}=G_s(y_t-y_s)$ for some maps $G,y$, then we write $\int_s^t G_rdy_r=(\I A)_{s,t}$, which is the usual Young integral \cite{Young}. If $\int_s^t G_rdy_r$  exists for each  $(s,t)\in\ival^2_\le$,  we say that $G$ is $y$-integrable on $\ival$.
\end{definition}
The existence of $\I A$ as defined in \cref{def.youngint} is usually obtained through the following version of the sewing lemma.
\begin{lemma}\label{lem:sewing}
  Let $(E,|\ \cdot\ |) $ be a Banach space, take $0\leq\iota<T$, let $\gamma_0>\eta_0\ge0$, $\gamma_1>\eta_1\ge0$, $\gamma_1>1$ and $A:(\iota,T]_\leq^2\to E$ be a function. Define $\delta A_{s,u,t}=A_{s,t}- A_{s,u}-A_{u,t}$. Assume that there is a finite constant $C$ such that
  \begin{align*}
    | A_{s,t}|\leq C(t-s)^{\gamma_0}(s-\iota)^{-\eta_0}
    \tand|\delta A_{s,u,t}|\leq C(t-s)^{\gamma_1}(s-\iota)^{-\eta_1}
  \end{align*}
   for all $(s,u,t)\in (\iota,T]^3_\leq$.
  Then   $\I A$ exists on $(\iota,T]^2_\le $ and extends continuously to $[\iota,T]^2_\le$. Furthermore, there are finite constants $K_1,K_2$ which depend only on $\gamma_i,\eta_i$'s such that
  \begin{align}
      &|(\I A)_{s,t}-A_{s,t}|\leq K_1 C(t-s)^{\gamma_1}(s-\iota)^{-\eta_1}, \quad \forall (s,t)\in (\iota,T]^2_\leq;\label{est.young}
   \\& |(\I A )_{s,t}|\leq K_2C\sum_{j=0,1}(t-s)^{\gamma_j-\eta_j}, \quad \forall (s,t)\in [\iota,T]^2_\leq.\label{est.tamedyoung}
  \end{align}
  If $\eta_0=\eta_1=0$ and $A$ is defined on $[\iota,T]^2_\le$ with the above bounds for all $(s,u,t)\in[\iota,T]^3_\le$, then $\I A$ exists on $[\iota,T]^2_\le$, i.e.\ the Riemann sums also converge for $s=\iota$.
\end{lemma}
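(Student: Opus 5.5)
The plan is the classical dyadic-removal proof of the sewing lemma, adapted to the singular weights $(s-\iota)^{-\eta_i}$. Fix $(s,t)\in(\iota,T]^2_\le$. On $[s,t]$ the weights are bounded by $(s-\iota)^{-\eta_i}$, so on any subinterval $[u,v]\subset[s,t]$ we have $|A_{u,v}|\le C(v-u)^{\gamma_0}(s-\iota)^{-\eta_0}$ and $|\delta A_{u,w,v}|\le C(v-u)^{\gamma_1}(s-\iota)^{-\eta_1}$. This puts us in the standard non-singular setting with constant $C(s-\iota)^{-\eta_1}$. Take a partition $\Pi$ of $[s,t]$ with $N$ intervals. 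There is a point whose two adjacent intervals have total length $\le 2(t-s)/(N-1)$; removing it changes the Riemann sum by at most $C(s-\iota)^{-\eta_1}(2(t-s)/(N-1))^{\gamma_1}$. Iterating down to the trivial partition and summing over $N$ (convergent since $\gamma_1>1$) gives
\[
\Big|\sum_{[u,v]\in\Pi}A_{u,v}-A_{s,t}\Big|\le 2^{\gamma_1}\zeta(\gamma_1)\,C(t-s)^{\gamma_1}(s-\iota)^{-\eta_1}.
\]

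For convergence of the Riemann sums, compare an arbitrary partition $\Pi$ with the common refinement $\Pi\cup\Pi'$. Applying the bound above on each interval $[u,v]\in\Pi$ gives a difference of at most $K C(s-\iota)^{-\eta_1}\sum_{[u,v]\in\Pi}(v-u)^{\gamma_1}\le KC(s-\iota)^{-\eta_1}|\Pi|^{\gamma_1-1}(t-s)$. This tends to zero with the mesh, so the sums form a Cauchy net and $\I A$ exists on $(\iota,T]^2_\le$. Passing to the limit in the maximal inequality yields \eqref{est.young} with $K_1=2^{\gamma_1}\zeta(\gamma_1)$. The same argument with $\eta_0=\eta_1=0$, and the bounds holding up to $s=\iota$, gives the final claim directly. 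The limit $\I A$ is additive, $(\I A)_{s,t}=(\I A)_{s,u}+(\I A)_{u,t}$, because refinements of partitions of $[s,u]$ and $[u,t]$ concatenate.

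The main obstacle is the extension to $s=\iota$ and estimate \eqref{est.tamedyoung}, where the weight blows up; here I would use a geometric decomposition toward $\iota$. For $s\in(\iota,t]$, set $s_k:=\iota+2^{-k}(t-\iota)$ and let $k_0$ be such that $s\in[s_{k_0+1},s_{k_0}]$. By additivity,
\[
(\I A)_{s,t}=(\I A)_{s,s_{k_0}}+\sum_{k=1}^{k_0}(\I A)_{s_k,s_{k-1}}.
\]
For each dyadic piece, \eqref{est.young} together with the bound on $A$ gives
\[
|(\I A)_{s_k,s_{k-1}}|\le C\big[(s_{k-1}-s_k)^{\gamma_0}(s_k-\iota)^{-\eta_0}+K_1(s_{k-1}-s_k)^{\gamma_1}(s_k-\iota)^{-\eta_1}\big]\lesssim C\sum_j 2^{-k(\gamma_j-\eta_j)}(t-\iota)^{\gamma_j-\eta_j},
\]
since $s_{k-1}-s_k=s_k-\iota$. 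The series converges because $\gamma_j>\eta_j$, and the first piece $(\I A)_{s,s_{k_0}}$ is bounded in the same way. This gives a bound by $\sum_j(t-\iota)^{\gamma_j-\eta_j}$. The same argument shows $(\I A)_{s,t}$ is Cauchy as $s\downarrow\iota$, since the tail sums vanish, so it has a limit that we define as $(\I A)_{\iota,t}$. Continuity in both variables follows similarly.

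To get $(t-s)$ rather than $(t-\iota)$ in \eqref{est.tamedyoung}, split into two cases. If $s-\iota\ge t-s$, then \eqref{est.young} and the bound on $A$ give $C\sum_j(t-s)^{\gamma_j}(s-\iota)^{-\eta_j}\le C\sum_j(t-s)^{\gamma_j-\eta_j}$. If $s-\iota<t-s$, then $t-\iota<2(t-s)$, and we write $(\I A)_{s,t}=(\I A)_{\iota,t}-(\I A)_{\iota,s}$ and apply the dyadic bound to each term, with $t-\iota$ and $s-\iota$ both at most $2(t-s)$. Keeping track of constants in this step is where I expect the only care is needed.
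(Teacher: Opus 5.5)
Your proposal is correct and follows the same route as the paper. You bound the weight by $(s-\iota)^{-\eta_j}$ on subintervals of $[s,T]$, run the standard sewing argument to get \eqref{est.young} with $K_1=2^{\gamma_1}\zeta(\gamma_1)$, and then tame the singularity at $\iota$ by a dyadic decomposition toward $\iota$, which yields \eqref{est.tamedyoung} and the limit as $s\downarrow\iota$.

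The only difference is that you prove by hand the two ingredients the paper cites: Young's point-removal argument behind \cite[Lemma 4.2]{FH}, and the taming lemma \cite[Lemma 3.4]{LL}. This makes explicit that only $\gamma_j>\eta_j\ge0$ and $\gamma_1>1$ are used, which the paper has to justify by remarking that the argument of \cite{LL} carries over beyond exponents in $[0,1]$.
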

\begin{proof}
  Fix $s_0\in(\iota,T]$. Since $(a-\iota)^{-\eta_j}\le(s_0-\iota)^{-\eta_j}$ for $a\ge s_0$, the assumptions give, for all $(a,u,b)\in[s_0,T]^3_\le$,
  \begin{align*}
    |A_{a,b}|\le C(s_0-\iota)^{-\eta_0}(b-a)^{\gamma_0}\tand|\delta A_{a,u,b}|\le C(s_0-\iota)^{-\eta_1}(b-a)^{\gamma_1}.
  \end{align*}
  In particular $A_{a,a}=0$, and the restriction of $A$ to $[s_0,T]^2_\le$ satisfies the hypotheses of the sewing lemma \cite[Lemma 4.2]{FH} with $\alpha=\min\{\gamma_0,1\}$ and $\beta=\gamma_1>1$. By that lemma and its proof, the Riemann sums of $A$ converge on $[s_0,T]^2_\le$ as the mesh size vanishes, and $|(\I A)_{s,t}-A_{s,t}|\le2^{\gamma_1}\zeta(\gamma_1)C(s_0-\iota)^{-\eta_1}(t-s)^{\gamma_1}$ for $(s,t)\in[s_0,T]^2_\le$, where $\zeta$ is the Riemann zeta function. Since $s_0$ is arbitrary, $\I A$ exists on $(\iota,T]^2_\le$, and taking $s_0=s$ gives \eqref{est.young} with $K_1=2^{\gamma_1}\zeta(\gamma_1)$.
  Using the assumptions, we have
  \begin{align*}
    |(\I A)_{s,t}|
    \les C(t-s)^{\gamma_0}(s-\iota)^{-\eta_0}+C(t-s)^{\gamma_1}(s-\iota)^{-\eta_1}.
  \end{align*}
  Applying the taming lemma \cite[Lemma 3.4]{LL}, we obtain \eqref{est.tamedyoung} for $(s,t)\in(\iota,T]^2_\le$ (in \cite{LL} the exponents are assumed to lie in $[0,1]$, but the same argument carries over verbatim to $\gamma_j>\eta_j\ge0$). This also shows the existence of $(\I A)_{\iota,t}$ as $\lim_{s\downarrow \iota}(\I A)_{s,t}$ for each $t\in(\iota,T]$. The estimate for $(\I A)_{s,t}$ (as in \eqref{est.tamedyoung}) is now extended trivially to all $(s,t)\in[\iota,T]^2_\le$. If $\eta_0=\eta_1=0$ and the bounds hold on $[\iota,T]^3_\le$, the first step applies verbatim with $s_0=\iota$, so the Riemann sums converge on $[\iota,T]^2_\le$.
\end{proof}

\begin{remark}[Riemann sums at the singular endpoint]\label{rem:sewing-endpoint}
    In \cref{lem:sewing}, $(\I A)_{\iota,t}$ is defined as $\lim_{s\downarrow\iota}(\I A)_{s,t}$. If $A$ is also defined at $s=\iota$ and $\eta_0=0$, $|A_{\iota,v}|\le C(v-\iota)^{\gamma_0}$, then the Riemann sums over partitions of $[\iota,t]$ converge to $(\I A)_{\iota,t}$; this follows by splitting the sum into the first interval, the intervals with $v-u>u-\iota$ (whose right endpoints are geometrically spaced) and the remaining ones, for which \eqref{est.young} applies. If $\eta_0>0$ this may fail. For instance, in \cref{lem:youngPdg} (ii) the Riemann sums $\sum_\Pi P^a_{\iota,u}f_u\gamma_{u,v}$ converge in $\bar B^{1-\epsilon}$ (where $\eta_0=0$) but not, in general, in $\bar B^{2-\epsilon}$ (where $\eta_0=\frac12$); this is why the $\bar B^{2-\epsilon}$-valued integral at $s=\iota$ is defined as a limit as $s\downarrow\iota$.
\end{remark}

\subsection{An intermediate case: space-independent diffusion coefficients}\label{sub:intermediate}
In this subsection the diffusion coefficient does not depend on $x$, but it may depend on an additional parameter $z\in\Rd$; this is needed for the freezing argument in \cref{sub:general}. Thus, let $a:[0,T]\times\Rd\to\R^{d\times d}$, $(r,z)\mapsto a(r,z)$, be measurable and such that $a(\cdot,z)$ satisfies $(H^a_1)$ for every $z\in\Rd$. For $0\le s\le t\le T$ we put $P^{a(z)}_{s,t}:=P^Q_{s,t}$ with $Q_{s,t}=\int_s^ta(r,z)dr$, i.e.\ $P^{a(z)}_{s,t}$ is the convolution with the centred Gaussian density with covariance $\int_s^ta(r,z)dr$. Then $P^{a(z)}_{\iota,s}P^{a(z)}_{s,t}=P^{a(z)}_{\iota,t}$ for $\iota\le s\le t$, $P^{a(z)}_{s,t}$ is a Fourier multiplier and commutes with derivatives and with the Littlewood--Paley blocks $\Delta_j$, and, for every tempered distribution $\varphi$, $$\partial_sP^{a(z)}_{s,t}\varphi=-\frac12a^{ij}(s,z)\partial^2_{ij}P^{a(z)}_{s,t}\varphi\quad \text{   and   } \quad  \partial_tP^{a(z)}_{s,t}\varphi=\frac12a^{ij}(t,z)\partial^2_{ij}P^{a(z)}_{s,t}\varphi$$ for a.e.\ $s<t$, as identities on the Fourier side. When $a$ does not depend on $z$ we simply write $P^a_{s,t}$.

To run the freezing argument, we need an extension of the spaces above that includes the additional parameter. For $\nu\in\R$ and $p\in(1,\infty)$, let $\bar B^\nu$ denote the space of families $f=(f^z)_{z\in\Rd}$ of tempered distributions such that $z\mapsto\Delta_jf^z$ is a measurable map into $L_p(\Rd)$ for every $j$ and
\begin{equation*}
    \|f\|_{\bar B^\nu}:=\sup_{j\ge0}2^{j\nu}\Big(\int_\Rd \| \Delta_jf^z\|_{L_p}^p dz \Big)^{\frac1p}<\infty ;
\end{equation*}
that is, $\bar B^\nu=B^\nu_{p,\infty}(\R^d_x;L_p(\R^d_z))$. Note that the supremum over $j$ is taken outside the $z$-integral. We also write $\Lb_p:=L_p(\Rd\times\Rd)$, identified with families $(f^z)$ with $f^z\in L_p$, so that $\Lb_p\hookrightarrow\bar B^0$. As in \cref{sub:not}, a function $v:[0,T]\to\bar B^\nu$ is called measurable if it is strongly measurable as a map into the separable space $\bar B^{\nu-1}_{p,p}:=B^{\nu-1}_{p,p}(\R^d_x;L_p(\R^d_z))\supset\bar B^\nu$, and time integrals of such functions are Bochner integrals in $\bar B^{\nu-1}_{p,p}$. The spaces $\solb^\epsilon_{k,p}$ and $\bar{\mathbb D}^\epsilon$ are defined as in \cref{def:Se}, with $B^\nu_{p,\infty}$ and $L_p$ replaced by $\bar B^\nu$ and $\Lb_p$:
\begin{align}
  \label{def:sol2p} &\solb^\epsilon_{2,p}:=L^\infty([0,T];\bar B^{2-\epsilon})\cap C^{\frac12}([0,T];\bar B^{1-\epsilon}),
  \\& \solb^\epsilon_{1,p}:=L^\infty([0,T];\bar B^{1-\epsilon})\cap C^{\frac12}([0,T];\bar B^{-\epsilon}),\label{def:sol1p}
  \\ &\bar{\mathbb D}^\epsilon:=L^\infty([0,T];\bar B^{-\epsilon})+L^{q_\epsilon}([0,T];\Lb_p). 
\end{align}
Operators acting in the variable $x$ are extended $z$-wise: $(P^a_{s,t}f)^z:=P^{a(z)}_{s,t}f^z$, $(a\partial^2_{ij}f)^z:=a(\cdot,z)\partial^2_{ij}f^z$ and $(gf)^z:=gf^z$ for a function $g$ of $(t,x)$. As in \eqref{eq:S1e-emb}, $\sup_t(\int\|f^z_t\|^p_{L_\infty}dz)^{1/p}\lesssim\|f\|_{\solb^\epsilon_{1,p}}$; this follows from Bernstein's inequality $\|\Delta_jf^z\|_{L_\infty}\lesssim2^{jd/p}\|\Delta_jf^z\|_{L_p}$ and $1-\epsilon>\frac dp$.

We list the following estimates for the semigroup $(P^{a}_{s,t})$ that will be used heavily later. The proofs are given in \cref{sec:app1}.
\begin{lemma}[Heat kernel bounds]
    \label{lem:semi-group-reg}
    Let $p\in(1,\infty)$ and $\nu\in\R$. There is a constant $N$ depending only on $d,p,K,T,\nu$ and on the parameters $\mu,q'$ below such that for every $z\in\Rd$, all $0\le s<t\le T$ and, in (iii), all $\iota\in[0,s)$, writing $P_{s,t}=P^{a(z)}_{s,t}$ and $B^\nu=B^\nu_{p,\infty}$:
  \begin{enumerate}[(i)]
      \item for $\mu\in[0,3]$, \quad $\|P_{s,t}f\|_{B^{\nu+\mu}}\le N(t-s)^{-\frac\mu2}\|f\|_{B^{\nu}}$;\hfill\refstepcounter{equation}\textup{(\theequation)}\label{est:a-1}
      \item for $\mu\in[0,2]$, \quad $\|(P_{s,t}-I)f\|_{B^{\nu}}\le N(t-s)^{\frac\mu2}\|f\|_{B^{\nu+\mu}}$;\hfill\refstepcounter{equation}\textup{(\theequation)}\label{est:pt-3}
      \item for $\mu\in[0,3]$, \quad $\|P_{\iota,t}f-P_{\iota,s}f\|_{B^{\nu}}\le N(t-s)^{\frac12}(s-\iota)^{-\frac\mu2}\|f\|_{B^{\nu+1-\mu}}$;\hfill\refstepcounter{equation}\textup{(\theequation)}\label{est:pt-4}
      \item (maximal regularity) for $\mu\in[0,2]$ and $h\in L^\infty([s,t];B^\nu)$,
      \begin{align}\label{est:inte-P-h}
          \Big\|\int_s^tP_{s,r}h_rdr\Big\|_{B^{\nu+\mu}}\le N(t-s)^{1-\frac\mu2}\sup_{r\in[s,t]}\|h_r\|_{B^\nu};
      \end{align}
      \item for $q'\in[1,\infty)$, $\mu\in[0,2-\frac2{q'}]$ and $h\in L^{q'}([s,t];L_p)$,
      \begin{align}\label{est:heat-LqLp}
          \Big\|\int_s^tP_{s,r}h_rdr\Big\|_{B^{\mu}}\le N(t-s)^{1-\frac\mu2-\frac1{q'}}\|h\|_{L^{q'}([s,t];L_p)}.
      \end{align}
  \end{enumerate}
  The same estimates hold with $B^\nu_{p,\infty}$, $L_p$ replaced by $\bar B^\nu$, $\Lb_p$, with the same constants.
\end{lemma}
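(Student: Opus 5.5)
All five estimates reduce to one Fourier-multiplier bound on a single Littlewood--Paley block. Since $P_{s,t}=P^{a(z)}_{s,t}$ is convolution with a Gaussian of covariance $Q_{s,t}=\int_s^ta(r,z)dr$, and $K^{-1}(t-s)I\le Q_{s,t}\le K(t-s)I$ by $(H^a_1)$, its symbol is $e^{-\frac12\xi^\top Q_{s,t}\xi}$. Let $\tilde\Delta_j$ be a fattened block equal to the identity on the support of $\widehat{\Delta_jf}$. The key estimate I would prove is
\begin{align*}
\|\Delta_jP_{s,t}f\|_{L_p}\le Ne^{-c(t-s)4^j}\|\Delta_jf\|_{L_p}\quad(j\ge1),\qquad \|\Delta_0P_{s,t}f\|_{L_p}\le N\|\Delta_0f\|_{L_p}.
\end{align*}
To get it, rescale $\xi=2^j\eta$. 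The symbol $m_j(\eta)=\tilde\varphi(\eta)e^{-\frac12 4^j\eta^\top Q_{s,t}\eta}$ has $L_1$-integrable kernel, with norm bounded by finitely many derivatives of $m_j$ on the annulus; these are controlled by $e^{-c4^j(t-s)}$ times powers of $4^j(t-s)$, and those powers are absorbed by adjusting $c$. The constant depends only on $d,K$, uniformly in $z$ and in $s,t$. The same argument gives $\|\Delta_j(P_{s,t}-I)f\|_{L_p}\le N\min\{1,(t-s)4^j\}\|\Delta_jf\|_{L_p}$, using the symbol $(e^{-\frac12\xi^\top Q\xi}-1)\tilde\varphi(2^{-j}\xi)$, whose derivative bounds carry a factor $\min\{1,4^j(t-s)\}$.

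With this in hand, (i)--(iii) are elementary sums over $j$.

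(i) Use $2^{j\mu}e^{-c(t-s)4^j}\le N(t-s)^{-\mu/2}$ for $j\ge1$. For $j=0$ use $1\le T^{\mu/2}(t-s)^{-\mu/2}$.

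(ii) Use $\min\{1,(t-s)4^j\}\le((t-s)4^j)^{\mu/2}$ for $\mu\in[0,2]$.

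(iii) Write $P_{\iota,t}-P_{\iota,s}=P_{\iota,s}(P_{s,t}-I)$. On block $j$, apply the bound for $P_{s,t}-I$ with exponent $\tfrac12$, i.e.\ $\min\{1,4^j(t-s)\}\le 2^j(t-s)^{1/2}$. Then apply $2^{j\mu}e^{-c(s-\iota)4^j}\le N(s-\iota)^{-\mu/2}$, giving $(t-s)^{1/2}(s-\iota)^{-\mu/2}2^{j(1-\mu)}\|\Delta_jf\|_{L_p}$. Equivalently, this is (ii) with $\mu=1$ combined with (i). The cap $\mu\le3$ only affects constants.

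(iv) This step carries the content of maximal regularity in $B^{\nu+2}_{p,\infty}$. Since $\Delta_j$ commutes with the Bochner integral,
\[
2^{j(\nu+\mu)}\Big\|\Delta_j\int_s^tP_{s,r}h_rdr\Big\|_{L_p}\le N\,2^{j\mu}\int_s^te^{-c(r-s)4^j}dr\,\sup_r\|h_r\|_{B^\nu}.
\]
The last factor is bounded by $2^{j\mu}\min\{t-s,4^{-j}\}\le(t-s)^{1-\mu/2}$ for $\mu\in[0,2]$. The supremum over $j$ is taken after this per-block estimate, and this is exactly why $B_{p,\infty}$ works while $L_{2,p}$ fails (\cref{rem:why-besov}).

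(v) First use $\|\Delta_jh_r\|_{L_p}\lesssim\|h_r\|_{L_p}$. Then apply H\"older in $r$:
\[
2^{j\mu}\int_s^te^{-c(r-s)4^j}\|h_r\|_{L_p}dr\le 2^{j\mu}\big\|e^{-c(\cdot-s)4^j}\big\|_{L^{q'/(q'-1)}}\|h\|_{L^{q'}L_p}.
\]
The $L^{q'/(q'-1)}$ norm is at most $\min\{(t-s),4^{-j}\}^{1-1/q'}$, and for $\mu\le2-2/q'$ one has $2^{j\mu}\min\{t-s,4^{-j}\}^{1-1/q'}\le(t-s)^{1-1/q'-\mu/2}$.

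For the $\bar B^\nu$, $\Lb_p$ versions, every bound above is a pointwise-in-$z$ inequality $\|\Delta_j(\cdot)^z\|_{L_p}\le(\text{constant independent of }z)\times(\cdots)^z$. I would take the $L_p(dz)$ norm at fixed $j$, using Minkowski's integral inequality to pass it inside the $dr$-integrals in (iv) and (v), and only afterwards take $\sup_j$. Measurability in $z$ follows from measurability of $a(\cdot,z)$.

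The main obstacle is the uniform multiplier bound with exponential decay. It is needed uniformly over all covariances $Q$ satisfying $K^{-1}(t-s)\le Q\le K(t-s)$, which need not commute with one another. I would handle this by a direct kernel computation: bound $\|\mathcal F^{-1}m_j\|_{L_1}$ by $\|(1+|x|^2)^{d}\mathcal F^{-1}m_j\|_{L_\infty}$, and bound the latter by $L_1$-norms of derivatives $\partial^\kappa m_j$ with $|\kappa|\le2d$. Each derivative is a polynomial in $4^j Q\eta$ times the Gaussian, so it is bounded by $N(1+4^j(t-s))^{2d}e^{-4^j(t-s)|\eta|^2/(2K)}$ on the annulus $|\eta|\ge3/4$, which yields the claimed decay.
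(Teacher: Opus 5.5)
Your proposal is correct and follows essentially the same route as the paper: a single-block bound $\|P_{s,t}\Delta_jf\|_{L_p}\le Ne^{-c4^j(t-s)}\|\Delta_jf\|_{L_p}$ ($j\ge1$), uniform in $z$, then per-block computations for (i)--(v), with the supremum over $j$ taken only after the $L_p(dz)$-norm in the barred case. The differences are cosmetic. You prove the block decay by a direct kernel estimate instead of citing the proof of \cite[Lemma 2.4]{BCD}, and you bound $P_{s,t}-I$ on a block through its symbol rather than through $(P_{s,t}-I)\Delta_jf=\frac12\int_s^ta^{kl}(r,z)\partial^2_{kl}P_{s,r}\Delta_jf\,dr$ and Bernstein's inequality.
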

Note that (iii) follows from (i) and (ii), since $P_{\iota,t}-P_{\iota,s}=(P_{s,t}-I)P_{\iota,s}$. In addition,  combining (iv) and (v), we have for $h\in\mathbb D^\epsilon$ (or $\bar{\mathbb D}^\epsilon$) and $\mu\in[\epsilon,2]$,
\begin{align}\label{est:inte-P-h-D}
    \Big\|\int_s^tP_{s,r}h_rdr\Big\|_{B^{\mu-\epsilon}_{p,\infty}}\le N(t-s)^{1-\frac{\mu}2}\|h\|_{\mathbb D^\epsilon([s,t])}.
\end{align}
Indeed, if $h=h_1+h_2$ with $h_1\in L^\infty B^{-\epsilon}_{p,\infty}$ and $h_2\in\LL^{q_\epsilon}_p$, we apply (iv) with $\nu=-\epsilon$ to $h_1$, and, if $\epsilon>0$, (v) with $\mu-\epsilon\in[0,2-\epsilon]$ in place of $\mu$ and $q'=q_\epsilon=\frac2\epsilon$ to $h_2$; in both cases the exponent of $t-s$ is $1-\frac\mu2$. (If $\epsilon=0$, then $\mathbb D^0=L^\infty([0,T];B^0_{p,\infty})$ since $L_p\hookrightarrow B^0_{p,\infty}$, and only (iv) is needed.)
\begin{remark}\label{rem.extheat}
    All estimates of \cref{lem:semi-group-reg} are proved block by block: for each Littlewood--Paley block, the bound holds uniformly in $z$, and one then takes the $L_p$-norm in $z$ and the supremum over $j$. This is why the supremum over $j$ is placed outside the $z$-integral in the definition of $\bar B^\nu$.
\end{remark}

\begin{lemma}\label{lem.prodS}
  Let $p\in(d,\infty)$ and write $B^\nu=B^\nu_{p,\infty}$.
  \begin{enumerate}[(i)]
      \item (Pointwise multipliers) Let $\nu\in(-1,1)$ and $\varrho\in(|\nu|,1]$. Then $\|mF\|_{B^\nu}\le N\|m\|_{C^\varrho}\|F\|_{B^\nu}$. More generally, if $m=(m^z)_z$ is a family of functions such that $(z,x)\mapsto m^z(x)$ is measurable, then $\|(m^zF^z)_z\|_{\bar B^\nu}\le N\sup_z\|m^z\|_{C^\varrho}\|F\|_{\bar B^\nu}$.
      \item (Products) Let $\nu\in(\frac dp,1]$. Then for $g\in L_{1,p}$ and $F\in\bar B^\nu$, $\|(gF^z)_z\|_{\bar B^\nu}\le N\|g\|_{L_{1,p}}\|F\|_{\bar B^\nu}$.
      \item For $h\in\solb^\epsilon_{2,p}$, $g\in\sol_{1,p}$ and $f\in\solb^\epsilon_{1,p}$,
    \begin{gather}
     \|\nabla h\|_{\solb^\epsilon_{1,p}}\lesssim\|h\|_{\solb^\epsilon_{2,p}},\label{est:nabla-h}
      \\
      \|gf\|_{\solb^\epsilon_{1,p}}\lesssim \|g\|_{\sol_{1,p}}\|f\|_{\solb^\epsilon_{1,p}}.\label{est:fg}
    \end{gather}
      \item (Commutator) Let $\nu\in(-\alpha,0]$, $a\in C^\alpha(\Rd)$, and let $\varphi_k(x)=k^d\varphi(kx)$, $k\ge1$, for a Schwartz function $\varphi$ with $\int\varphi=1$. Then for $G\in B^\nu$,
      \begin{align}\label{est:commutator}
          \|a(\varphi_k\ast G)-\varphi_k\ast(aG)\|_{L_p}\le N k^{-(\alpha+\nu)}\log(1+k)\|a\|_{C^\alpha}\|G\|_{B^\nu}.
      \end{align}
  \end{enumerate}
  The same estimates hold without the parameter $z$, i.e.\ with $\bar B^\nu,\solb^\epsilon_{k,p}$ replaced by $B^\nu,\sol^\epsilon_{k,p}$.
\end{lemma}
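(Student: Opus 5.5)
The plan is to prove all four parts block by block with Bony's paraproduct decomposition. For two functions we write $mF=T_mF+T_Fm+R(m,F)$, where $T_mF=\sum_j S_{j-1}m\,\Delta_jF$ and $R(m,F)=\sum_{|j-k|\le1}\Delta_jm\,\Delta_kF$. Each bound is pointwise in $z$ at the level of a single block $\Delta_j$. We then take the $L_p(dz)$ norm before the supremum over $j$, as in \cref{rem.extheat}. This is why the $z$-versions follow with the same constants.

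For (i), take $\nu\in(-1,1)$ and $\varrho>|\nu|$.
\begin{itemize}
\item The paraproduct $T_mF$ satisfies $\|\Delta_j T_mF\|_{L_p}\lesssim\|m\|_{L_\infty}\sum_{|k-j|\le 2}\|\Delta_kF\|_{L_p}$, which gives the $B^\nu$ bound.
\item For $T_Fm$ we use $\|S_{k-1}F\|_{L_p}\lesssim\sum_{i<k}2^{-i\nu}\|F\|_{B^\nu}$. If $\nu<0$ this is $\lesssim 2^{-k\nu}$; if $\nu\ge0$ it is $\lesssim k$, absorbed by $\varrho>\nu$. Multiplying by $\|\Delta_km\|_{L_\infty}\lesssim2^{-k\varrho}$ and summing gives a factor $2^{-j(\varrho+\nu)}$, or $k2^{-k\varrho}$ when $\nu\ge0$. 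Both are $\le 2^{-j\nu}$ since $\varrho+\nu>0$.
\item For the resonant part, $\Delta_jR=\sum_{k\gtrsim j}\Delta_j(\Delta_km\,\tilde\Delta_kF)$ is bounded by $\sum_{k\ge j-3}2^{-k(\varrho+\nu)}\|m\|_{C^\varrho}\|F\|_{B^\nu}$. This series converges and is $\lesssim2^{-j\nu}$ because $\varrho+\nu>0$.
\end{itemize}
In the parametrized case $m=m^z$, every constant involves only $\sup_z\|m^z\|_{C^\varrho}$, so the $z$-integration commutes with the estimate.

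For (ii), take $\nu\in(d/p,1]$ and $g\in L_{1,p}\hookrightarrow C^{1-d/p}\cap L_\infty$ (since $p>d$). The terms $T_gF^z$ and $R(g,F^z)$ are handled as in (i) with $\varrho=1-d/p$. When $\nu\le 1-d/p$ we may need $\varrho>\nu$; instead we use $\|\Delta_kg\|_{L_p}\lesssim2^{-k}\|g\|_{L_{1,p}}$ together with $\|\tilde\Delta_kF^z\|_{L_\infty}\lesssim2^{k(d/p-\nu)}\|\Delta_k F^z\|_{L_p}$ (Bernstein). Then the resonant term is bounded by $\sum_{k\ge j}2^{-k(1+\nu-d/p)}\lesssim 2^{-j\nu}$, since $1-d/p>0$. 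For $T_{F^z}g$ we use $\|S_{k-1}F^z\|_{L_\infty}\lesssim\sum_{i<k}2^{i(d/p-\nu)}$, which is bounded because $\nu>d/p$. Combined with $\|\Delta_k g\|_{L_p}\lesssim 2^{-k}\le 2^{-k\nu}$, this gives the claim. In each term the $z$-dependence sits in $F^z$ only, either through its $L_p$ or its $L_\infty$ norm. The $L_\infty$ norms are converted to $L_p$ norms by Bernstein blockwise before integrating in $z$.

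For (iii), \eqref{est:nabla-h} is immediate from the boundedness of $\partial_k:\bar B^{\nu}\to\bar B^{\nu-1}$. For \eqref{est:fg}:
\begin{itemize}
\item We apply (ii) with $\nu=1-\epsilon>d/p$ to $g_tf_t$ for each $t$.
\item For the time-H\"older part we write $g_tf_t-g_sf_s=(g_t-g_s)f_t+g_s(f_t-f_s)$.
\item The second term is handled by a negative-index variant of (ii): multiplication by $g_s\in L_{1,p}\subset C^{1-d/p}$ on $\bar B^{-\epsilon}$, which is (i) with $\varrho=1-d/p>\epsilon$.
\item The first term requires $L_p\cdot \bar B^{1-\epsilon}\to\bar B^{-\epsilon}$, which follows from $f_t^z\in L_\infty$ blockwise. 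We estimate $\|(g_t-g_s)f^z_t\|_{L_p}\le\|g_t-g_s\|_{L_p}\|f^z_t\|_{L_\infty}$ and use $L_p\hookrightarrow B^0\hookrightarrow B^{-\epsilon}$. Integrating in $z$ then gives the factor $(\int\|f^z_t\|^p_{L_\infty}dz)^{1/p}\lesssim\|f\|_{\solb^\epsilon_{1,p}}$.
\end{itemize}

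For (iv), we write the commutator as $\int\varphi_k(x-y)(a(x)-a(y))G(y)\,dy$ and expand $G=\sum_j\Delta_jG$.
\begin{itemize}
\item Low blocks $2^j\le k$: the kernel factor $|a(x)-a(y)|\le\|a\|_{C^\alpha}|x-y|^\alpha$ against $\varphi_k$ gives $k^{-\alpha}\|\Delta_jG\|_{L_p}\le k^{-\alpha}2^{-j\nu}\|G\|_{B^\nu}$. Summing over $2^j\le k$ gives $k^{-\alpha-\nu}$, with a $\log k$ factor at the endpoint $\nu=0$.
\item High blocks $2^j>k$: we treat $a\,\varphi_k\ast\Delta_jG$ and $\varphi_k\ast(a\Delta_jG)$ separately. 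The first is small since $\varphi_k$ is Schwartz, so $\varphi_k\ast\Delta_j$ decays rapidly in $2^j/k$. For the second we paraproduct-decompose $a\Delta_jG$; its output frequency is $\sim2^j$ except for the part $\Delta_{\sim j}a\,\Delta_jG$, which is bounded by $2^{-j(\alpha+\nu)}$ and summable for $2^j>k$.
\end{itemize}
This yields $k^{-(\alpha+\nu)}\log(1+k)$ as claimed.

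The main obstacle is the bookkeeping in (iv), namely the high-frequency part $\varphi_k\ast(a\Delta_jG)$, where one must exploit either the Fourier cutoff or the $\alpha$-H\"older difference. I would first try to treat it by the decay of $\widehat{\varphi_k}$, which is not compactly supported, and fall back on Taylor-expanding $\widehat{\varphi_k}$ if needed.
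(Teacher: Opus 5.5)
Your parts (i)--(iii) follow the paper's proof: Bony's decomposition block by block, Minkowski in $z$, and Bernstein to convert $\|S_{k-1}F^z\|_{L_\infty}$ into $L_p$ norms. Putting Bernstein on $F^z$ rather than on $g$ in the resonant term of (ii) works equally well.

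\textbf{The gap is in (iv), in the blocks $2^j>k$.} There you bound $a(\varphi_k\ast\Delta_jG)$ and $\varphi_k\ast(a\Delta_jG)$ separately, using only $\|a\|_{L_\infty}$ and the decay of $\varphi_k\ast\Delta_j$.
\begin{itemize}
\item That decay is in $k2^{-j}$, not in $k$. On blocks where $2^j$ is a fixed multiple of $k$ it gives no gain.
\item Consequently your bound $\sum_{2^j>k}\|a\|_{L_\infty}(k2^{-j})^M2^{-j\nu}\|G\|_{B^\nu}$ is of order $k^{-\nu}\|a\|_{L_\infty}\|G\|_{B^\nu}$. This is a factor $k^{\alpha}$ worse than \eqref{est:commutator}. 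For $\nu=0$, the case used in \cref{lem:MP-Sobolev}, it does not even tend to $0$.
\item This is not crude bounding. For $a\equiv1$ the commutator vanishes, while both pieces equal $\varphi_k\ast\Delta_jG$. For $2^j\approx2k$ this is in general not smaller than $\|\Delta_jG\|_{L_p}$ by any power of $k$.
\item So the factor $k^{-\alpha}$ can only come from cancellation between the two terms. The same objection applies to the part $S_{j-N}a\,\Delta_jG$ of your paraproduct, and expanding $\widehat{\varphi}$ does not help.
\end{itemize}
If $\widehat{\varphi}$ had compact support, these pieces would vanish for $2^j\gtrsim k$ and your scheme would go through. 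That case excludes the compactly supported mollifiers used later. Raising the threshold to $2^j\approx k^{1+\delta}$, with the H\"older bound below it, repairs the argument only for $\nu=0$. For $\nu<0$ it gives $k^{-(\alpha+\nu)+\eta}$ rather than $k^{-(\alpha+\nu)}\log(1+k)$.

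\textbf{What is missing is a frequency split of $a$ at scale $k$.} Normalize $\|a\|_{C^\alpha}=\|G\|_{B^\nu}=1$, take $2^L\le k<2^{L+1}$, and write $a=a_0+a_1$ with $a_0:=S_La$.

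For the rough part, $\|a_1\|_{L_\infty}\lesssim k^{-\alpha}$, and for $a_1$ alone the separate treatment is legitimate.
\begin{itemize}
\item First term: $\|a_1(\varphi_k\ast\Delta_lG)\|_{L_p}\lesssim k^{-\alpha}\min\{1,k2^{-l}\}2^{-l\nu}$, which is summable over $2^l>k$ since $1+\nu>0$.
\item Second term: write $\varphi_k\ast(a_1\Delta_lG)=\sum_{m\ge L}\varphi_k\ast(\Delta_ma\,\Delta_lG)$. The terms with $m\le l-N_0$ live at frequency $\approx2^l$ and gain $k2^{-l}$. The terms with $m>l-N_0$ are summed using $\alpha+\nu>0$.
\end{itemize}

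For the smooth part $a_0$ you must keep the commutator $a_0(\varphi_k\ast\Delta_lG)-\varphi_k\ast(a_0\Delta_lG)$. Use $\|\partial^\beta a_0\|_{L_\infty}\lesssim k^{|\beta|-\alpha}$ for $1\le|\beta|\le2$.
\begin{itemize}
\item Write $\Delta_lG=\Delta g_l$ with $\|g_l\|_{L_p}\lesssim4^{-l}2^{-l\nu}$ for $l\ge1$.
\item Move the Laplacian onto the kernel $\varphi_k(x-y)(a_0(x)-a_0(y))$. The resulting pieces have $L_1$ norms $\lesssim k^{2-\alpha}$ in $x$ and in $y$.
\item Schur's test then bounds this commutator in $L_p$ by $k^{2-\alpha}4^{-l}2^{-l\nu}$.
\item This sums over $2^l>k$ to $k^{-(\alpha+\nu)}$ because $2+\nu>0$.
\end{itemize}
Together with your H\"older-difference bound for $2^l\le k$, this yields \eqref{est:commutator}.
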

\begin{proof}
    (i), (ii) and (iv) are standard consequences of Bony's paraproduct decomposition; we give the proofs in \cref{sec:app1}. For (iii), the first estimate follows from the boundedness of $\partial_k:\bar B^\nu\to\bar B^{\nu-1}$. For the second, fix $s\le t$. Note that $\|g_s\|_{L_{1,p}}\le\|g\|_{\LL^\infty_{1,p}}$ for every $s$, not only for a.e.\ $s$, since $g\in C([0,T];L_p)$ and the $L_{1,p}$-norm is lower semicontinuous with respect to convergence in $L_p$. By (ii) with $\nu=1-\epsilon$, $\|g_tf_t\|_{\bar B^{1-\epsilon}}\lesssim\|g_t\|_{L_{1,p}}\|f_t\|_{\bar B^{1-\epsilon}}$. Moreover $g_tf_t-g_sf_s=(g_t-g_s)f_t+g_s(f_t-f_s)$. Using $\Lb_p\hookrightarrow\bar B^0\hookrightarrow\bar B^{-\epsilon}$ and the $L_\infty$-bound below \eqref{def:sol1p},
    \begin{align*}
        \|(g_t-g_s)f_t\|_{\bar B^{-\epsilon}}\lesssim\|g_t-g_s\|_{L_p}\Big(\int\|f^z_t\|^p_{L_\infty}dz\Big)^{\frac1p}\lesssim\|g\|_{\sol_{1,p}}\|f\|_{\solb^\epsilon_{1,p}}|t-s|^{\frac12}.
    \end{align*}
    Since $g_s\in L_{1,p}\hookrightarrow C^{1-d/p}$ and $1-\frac dp>\epsilon$, (i) with $\nu=-\epsilon$ and $\varrho=1-\frac dp$ gives $\|g_s(f_t-f_s)\|_{\bar B^{-\epsilon}}\lesssim\|g\|_{\sol_{1,p}}\|f_t-f_s\|_{\bar B^{-\epsilon}}$. This proves \eqref{est:fg}.
\end{proof}

\begin{lemma}\label{lem:youngPdg}
    Let $a$ be as at the beginning of \cref{sub:intermediate}, $f\in\solb^\epsilon_{1,p}$ and $\gamma\in C^{\beta}([0,T])$. Then the following statements hold.
\begin{enumerate}[(i)]
    \item The Young integral $(s,t)\mapsto \int_s^t  f_r d \gamma_r\in \bar B^{-\epsilon}$ is well-defined on $[0,T]^2_\le$ and
    \begin{align}\label{est:young-Lp}\begin{split}
        \Big\|\int_s^tf_rd\gamma_r-f_s\gamma_{s,t}\Big\|_{\bar B^{-\epsilon}}\le\Gamma\|\gamma\|_{C^\beta}\|f\|_{\solb^\epsilon_{1,p}}|t-s|^{\beta+\frac12},
        \\
        \Big\|\int_s^tf_rd\gamma_r\Big\|_{\bar B^{-\epsilon}}\le\Gamma\|\gamma\|_{C^\beta}\|f\|_{\solb^\epsilon_{1,p}}|t-s|^{\beta}.
    \end{split}\end{align}
    Moreover, for every bounded linear operator $H$ from $\bar B^{-\epsilon}$ to a Banach space $E$, $Hf$ is $\gamma$-integrable on $[0,T]$ and
    \begin{align}\label{id.Hintf}
         H\int_s^t f_rd\gamma_r=
        \int_s^t H f_rd\gamma_r.
    \end{align}
    \item For each $\iota\in[0,T]$, the Young integral $(s,t)\mapsto \int_s^t P^a_{\iota,r} f_r d \gamma_r$ is well-defined in $\bar B^{-\epsilon}$ on $[\iota,T]^2_\le$, as the limit in $\bar B^{-\epsilon}$ of the Riemann sums $\sum_{[v,w]\in\Pi}P^a_{\iota,v}f_v\gamma_{v,w}$. For $s>\iota$ it takes values in $\bar B^{2-\epsilon}$, and for every $(s,t)\in[0,T]^2_\le$,
    \begin{align}
   \Big\|\int_s^tP^a_{s,r}f_rd\gamma_r\Big\|_{\bar B^{1-\epsilon}}&\leq  \Gamma\|\gamma\|_{C^\beta}\|f\|_{\solb^\epsilon_{1,p}}
   |t-s|^{\beta},
   \label{est:bar-1p}
   \\
   \Big\|\int_s^tP^a_{s,r}f_rd\gamma_r\Big\|_{\bar B^{2-\epsilon}}&\leq\Gamma\|\gamma\|_{C^\beta}\|f\|_{\solb^\epsilon_{1,p}}
   |t-s|^{\beta-\frac{1}{2}}.\label{est:bar-2p}
    \end{align}
\end{enumerate}
Here $\Gamma$ is a finite constant depending only on $d,p,\beta,K,T,\epsilon$. The same statements hold for $f\in\sol^\epsilon_{1,p}$, with $\bar B$ replaced by $B_{p,\infty}$ and, in (ii), $P^a$ replaced by $P^{a(z)}$ for a fixed $z\in\Rd$.
\end{lemma}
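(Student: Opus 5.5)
Both parts are proved with the sewing lemma, \cref{lem:sewing}, using the heat kernel bounds of \cref{lem:semi-group-reg}. These are applied block by block, uniformly in $z$, as in \cref{rem.extheat}.

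For (i) we take the germ $A_{s,t}=f_s\gamma_{s,t}$ with values in $E=\bar B^{-\epsilon}$. Then
\[
\delta A_{s,u,t}=-(f_u-f_s)\gamma_{u,t},
\]
so $\|\delta A_{s,u,t}\|_{\bar B^{-\epsilon}}\le\|f\|_{C^{1/2}\bar B^{-\epsilon}}\|\gamma\|_{C^\beta}|t-s|^{\frac12+\beta}$. Since $\beta+\frac12>1$, we can apply \cref{lem:sewing} with $\eta_0=\eta_1=0$. This gives existence on $[0,T]^2_\le$ and the first bound in \eqref{est:young-Lp}. The second bound follows from $\|f_s\|_{\bar B^{-\epsilon}}\lesssim\|f\|_{\solb^\epsilon_{1,p}}$ and $|t-s|^{\frac12}\le T^{\frac12}$. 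The identity \eqref{id.Hintf} holds because $H$ commutes with the Riemann sums and is continuous.

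For (ii), fix $\iota$ and take $A_{v,w}=P^a_{\iota,v}f_v\gamma_{v,w}$. Now
\[
\delta A_{v,u,w}=-(P^a_{\iota,u}f_u-P^a_{\iota,v}f_v)\gamma_{u,w},
\]
and we split it as $P^a_{\iota,u}(f_u-f_v)+(P^a_{\iota,u}-P^a_{\iota,v})f_v$. We measure this in $\bar B^{\kappa-\epsilon}$ for $\kappa\in\{0,1,2\}$.

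The first piece is bounded by \eqref{est:a-1} with $\mu=\kappa$: it is at most $(u-\iota)^{-\kappa/2}|u-v|^{1/2}\|f\|$. The second piece is bounded by \eqref{est:pt-4} with $\nu=\kappa-\epsilon$ and $\mu=\kappa$, giving $(u-v)^{1/2}(v-\iota)^{-\kappa/2}\|f_v\|_{\bar B^{1-\epsilon}}$. Hence
\[
\|\delta A_{v,u,w}\|_{\bar B^{\kappa-\epsilon}}\lesssim(w-v)^{\beta+\frac12}(v-\iota)^{-\kappa/2}.
\]
Similarly, \eqref{est:a-1} with $\mu=\kappa-1$ (or $\mu=0$ when $\kappa=0$) gives
\[
\|A_{v,w}\|_{\bar B^{\kappa-\epsilon}}\lesssim(w-v)^\beta(v-\iota)^{-(\kappa-1)_+/2}.
\]
We apply \cref{lem:sewing} with $\gamma_0=\beta$, $\eta_0=(\kappa-1)_+/2$, $\gamma_1=\beta+\frac12$ and $\eta_1=\kappa/2$.

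\begin{itemize}
\item For $\kappa=0$ the singularities vanish, so the Riemann sums converge in $\bar B^{-\epsilon}$ on $[\iota,T]^2_\le$.
\item For $\kappa=2$ we need $\gamma_1>\eta_1$, i.e.\ $\beta+\frac12>1$, which holds. So for $s>\iota$ the integral lies in $\bar B^{2-\epsilon}$, and it coincides with the $\bar B^{-\epsilon}$ limit by the embedding.
\end{itemize}

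The estimates come from \eqref{est.tamedyoung} with $\iota=s$:
\begin{itemize}
\item For $\kappa=1$, the exponents are $\gamma_0-\eta_0=\beta$ and $\gamma_1-\eta_1=\beta$, which gives \eqref{est:bar-1p}.
\item For $\kappa=2$, the exponents are $\beta-\frac12$ and $\beta+\frac12-1=\beta-\frac12$, which gives \eqref{est:bar-2p}.
\end{itemize}
Case $\kappa=2$ is the main obstacle. The borderline singularity $(v-\iota)^{-1}$ in $\delta A$ is only integrable against the gain $(w-v)^{\beta+1/2}$ because $\beta>\frac12$. This is exactly where the taming version of the sewing lemma, with weights, is needed rather than the plain one.

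The unparametrised statements, with $B_{p,\infty}$ in place of $\bar B$ and a fixed $z$, follow by the same argument, since every heat kernel bound used holds for each fixed $z$.
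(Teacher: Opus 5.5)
Your proposal is correct and is essentially the paper's proof: the same germs $f_s\gamma_{s,t}$ and $P^a_{\iota,s}f_s\gamma_{s,t}$, the same sewing exponents $\gamma_0=\beta$, $\eta_0=\frac{(\kappa-1)_+}{2}$, $\gamma_1=\beta+\frac12$, $\eta_1=\frac\kappa2$, and \eqref{est.tamedyoung} with $\iota=s$. The paper splits $\delta A$ slightly differently, but this is immaterial.

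The one step you leave implicit, and which the paper spells out, concerns the singular endpoint $s=\iota$. The estimates \eqref{est:bar-1p}--\eqref{est:bar-2p} are needed exactly there. At that point \cref{lem:sewing} defines the $\bar B^{\kappa-\epsilon}$-valued integral only as $\lim_{s'\downarrow\iota}\int_{s'}^tP^a_{\iota,r}f_rd\gamma_r$. For $\kappa=2$ the Riemann sums are in general not even in $\bar B^{2-\epsilon}$, since their first term is $f_\iota\gamma_{\iota,w}$. So your ``coincides by the embedding'' argument does not cover this case. You must add that in $\bar B^{-\epsilon}$ one has $\int_\iota^t=\int_\iota^{s'}+\int_{s'}^t$, and that $\|\int_\iota^{s'}P^a_{\iota,r}f_rd\gamma_r\|_{\bar B^{-\epsilon}}\lesssim(s'-\iota)^\beta\to0$ by your $\kappa=0$ case. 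Hence that limit is indeed the $\bar B^{-\epsilon}$ Riemann-sum integral.
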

\begin{proof}
 (i). Let $A_{s,t}=f_s\gamma_{s,t}$. Then $\delta A_{s,u,t}=-(f_u-f_s)\gamma_{u,t}$, so that $\|\delta A_{s,u,t}\|_{\bar B^{-\epsilon}}\le\|\gamma\|_{C^\beta}\|f\|_{\solb^\epsilon_{1,p}}(t-s)^{\beta+\frac12}$, and $\beta+\frac12>1$. \cref{lem:sewing} (with $\iota=0$, $\eta_0=\eta_1=0$, and the bounds holding on $[0,T]^3_\le$) gives the existence of $\int f d\gamma$ on $[0,T]^2_\le$ and \eqref{est:young-Lp}, using also $\|A_{s,t}\|_{\bar B^{-\epsilon}}\lesssim\|\gamma\|_{C^\beta}\|f\|_{\solb^\epsilon_{1,p}}(t-s)^\beta$. Since $\|\delta(HA)_{s,u,t}\|_E\le\|H\|\|\delta A_{s,u,t}\|_{\bar B^{-\epsilon}}$, the same argument applies to $HA$, and \eqref{id.Hintf} follows because $H$ is continuous and commutes with Riemann sums.

 (ii). Fix $\iota\in[0,T]$, write $P_{s,t}=P^a_{s,t}$ and let $A_{s,t}:=P_{\iota,s}f_s\gamma_{s,t}$. For $(s,u,t)\in(\iota,T]^3_\le$,
    \begin{align*}
        \delta A_{s,u,t}=-(P_{\iota,u}-P_{\iota,s})f_u\gamma_{u,t}-P_{\iota,s}(f_u-f_s)\gamma_{u,t}.
    \end{align*}
 We first work in $\bar B^{-\epsilon}$. Since $P_{\iota,u}-P_{\iota,s}=P_{\iota,s}(P_{s,u}-I)$, \cref{lem:semi-group-reg} (i) with $\mu=0$ and (ii) with $\mu=1$ give $\|\delta A_{s,u,t}\|_{\bar B^{-\epsilon}}\lesssim\|\gamma\|_{C^\beta}\|f\|_{\solb^\epsilon_{1,p}}(t-s)^{\beta+\frac12}$, without any singularity at $\iota$. The same holds for $A$ itself: $\|A_{s,t}\|_{\bar B^{-\epsilon}}\lesssim\|\gamma\|_{C^\beta}\|f\|_{\solb^\epsilon_{1,p}}(t-s)^{\beta}$. Hence, by \cref{lem:sewing} with $\eta_0=\eta_1=0$, applied on the closed interval $[\iota,T]$, the Riemann sums converge in $\bar B^{-\epsilon}$ on $[\iota,T]^2_\le$, and \eqref{est.tamedyoung} gives $\|\int_\iota^sP_{\iota,r}f_rd\gamma_r\|_{\bar B^{-\epsilon}}\lesssim(s-\iota)^\beta$.

 Next, let $\nu\in\{1-\epsilon,2-\epsilon\}$. By \cref{lem:semi-group-reg} (i) with $\mu=\nu-1+\epsilon$, by (iii) with $\mu=\nu+\epsilon$, and by (i) with $\mu=\nu+\epsilon$ (applied to $f_u-f_s\in\bar B^{-\epsilon}$),
    \begin{align*}
        \|A_{s,t}\|_{\bar B^{\nu}}&\lesssim\|\gamma\|_{C^\beta}\|f\|_{\solb^\epsilon_{1,p}}(t-s)^{\beta}(s-\iota)^{-\frac{\nu-1+\epsilon}2},
        \\
        \|\delta A_{s,u,t}\|_{\bar B^{\nu}}&\lesssim\|\gamma\|_{C^\beta}\|f\|_{\solb^\epsilon_{1,p}}(t-s)^{\beta+\frac12}(s-\iota)^{-\frac{\nu+\epsilon}2}.
    \end{align*}
 We apply \cref{lem:sewing} in $E=\bar B^{\nu}$ with $\gamma_0=\beta$, $\eta_0=\frac{\nu-1+\epsilon}2\le\frac12$, $\gamma_1=\beta+\frac12$, $\eta_1=\frac{\nu+\epsilon}2\le1$; its conditions hold because $\beta>\frac12$. Since $\gamma_0-\eta_0=\gamma_1-\eta_1=\beta-\frac{\nu-1+\epsilon}2$, \eqref{est.tamedyoung} with $\iota=s$ gives \eqref{est:bar-1p} and \eqref{est:bar-2p} for $\nu=1-\epsilon$ and $\nu=2-\epsilon$ respectively. For $s>\iota$, the $\bar B^{\nu}$-valued integral $\int_s^tP_{\iota,r}f_rd\gamma_r$ coincides with the $\bar B^{-\epsilon}$-valued one, both being limits of the same Riemann sums and $\bar B^\nu\hookrightarrow\bar B^{-\epsilon}$. For $s=\iota$, \cref{lem:sewing} defines the $\bar B^\nu$-valued integral as the limit of $\int_{s}^tP_{\iota,r}f_rd\gamma_r$ in $\bar B^\nu$ as $s\downarrow\iota$; since, in $\bar B^{-\epsilon}$, $\int_\iota^tP_{\iota,r}f_rd\gamma_r=\int_\iota^sP_{\iota,r}f_rd\gamma_r+\int_s^tP_{\iota,r}f_rd\gamma_r$ and the first term tends to $0$ by the bound above, this limit is again the $\bar B^{-\epsilon}$-valued integral.
  \end{proof}

\begin{lemma}\label{lem.duhamel}
    Let $a$ be as at the beginning of \cref{sub:intermediate}, let $f \in\solb^\epsilon_{1,p}$, $h\in \bar{\mathbb D}^\epsilon$, and let $u\in\solb^\epsilon_{2,p}$ satisfy, for every $0\le s\le t\le T$, in $\bar B^{-1-\epsilon}$,
    \begin{align}
        \label{eq:Yheat}
        u_t-u_s+\int_s^t\frac12a^{ij}(r)\partial^2_{ij}u_rdr=\int_s^tf_rd\gamma_r+\int_s^th_rdr.
    \end{align}
    Then for every $s\le t$, in $\bar B^{-\epsilon}$,
    \begin{equation}\label{id.mildheat}
        P^a_{s,t}u_t-u_s=\int_s^tP^a_{s,r}f_rd\gamma_r+\int_s^tP^a_{s,r}h_rdr.
    \end{equation}
    The same holds, with the same proof, if $f d\gamma$ is replaced by a finite sum $\sum_if^id\gamma^i$ with $f^i\in\solb^\epsilon_{1,p}$ and $\gamma^i\in C^\beta([0,T])$.
\end{lemma}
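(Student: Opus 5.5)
The approach is to test the integral identity \eqref{eq:Yheat} against the semigroup, i.e.\ a discrete Duhamel argument: telescope along a partition and pass to the limit. Fix $s\le t$ and a partition $\Pi$ of $[s,t]$. We write
\begin{align*}
P^a_{s,t}u_t-u_s=\sum_{[v,w]\in\Pi}\big(P^a_{s,w}u_w-P^a_{s,v}u_v\big)=\sum_{[v,w]\in\Pi}\Big(P^a_{s,v}(u_w-u_v)+(P^a_{s,w}-P^a_{s,v})u_w\Big).
\end{align*}
By \eqref{eq:Yheat} applied on $[v,w]$ (everything holding $z$-wise), the first part equals
\[
P^a_{s,v}\Big(-\int_v^w\tfrac12a^{ij}(r)\partial^2_{ij}u_rdr+\int_v^wf_rd\gamma_r+\int_v^wh_rdr\Big).
\]
For the second part we use $\partial_rP^a_{s,r}=\frac12a^{ij}(r)\partial^2_{ij}P^a_{s,r}$, valid on the Fourier side. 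This gives $(P^a_{s,w}-P^a_{s,v})u_w=\int_v^wP^a_{s,r}\frac12a^{ij}(r)\partial^2_{ij}u_wdr$ as a Bochner integral in $\bar B^{-\epsilon}$. Here we use $u_w\in\bar B^{2-\epsilon}$ and \cref{lem:semi-group-reg} (i) with $\mu=0$, noting that $a(r,z)$ is a constant matrix for each $z$, so $a^{ij}(r)\partial^2_{ij}$ commutes with $P^a$.

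Combining the two parts, the drift terms give
\[
\sum_{[v,w]}\int_v^w P^a_{s,r}\tfrac12a^{ij}(r)\partial^2_{ij}u_w\,dr-\sum_{[v,w]}P^a_{s,v}\int_v^w\tfrac12a^{ij}(r)\partial^2_{ij}u_r\,dr
=\sum_{[v,w]}\int_v^w\Big(P^a_{s,r}\tfrac12a^{ij}(r)\partial^2_{ij}(u_w-u_r)+(P^a_{s,r}-P^a_{s,v})\tfrac12a^{ij}(r)\partial^2_{ij}u_r\Big)dr.
\]
We estimate both pieces in $\bar B^{-1-\epsilon}$, which is the topology where \eqref{eq:Yheat} holds.

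\begin{itemize}
\item For the first piece, $\|\partial^2(u_w-u_r)\|_{\bar B^{-1-\epsilon}}\lesssim|w-r|^{1/2}\|u\|_{\solb^\epsilon_{2,p}}$.
\item For the second piece, \cref{lem:semi-group-reg} (ii) with $\mu=1$ bounds $\|(P^a_{v,r}-I)\partial^2u_r\|_{\bar B^{-1-\epsilon}}$ by $|r-v|^{1/2}\|u_r\|_{\bar B^{2-\epsilon}}$.
\end{itemize}
Summing over $\Pi$, the total error is $O(|\Pi|^{1/2})\to0$.

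The source terms are handled as follows.

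\begin{itemize}
\item The $h$-sum $\sum P^a_{s,v}\int_v^wh_rdr$ differs from $\int_s^tP^a_{s,r}h_rdr$ by $\sum\int_v^w(P^a_{s,v}-P^a_{s,r})h_rdr$. This tends to $0$ in $\bar B^{-1-\epsilon}$ by (ii) with $\mu=1$ applied to $h_r\in\bar B^{-\epsilon}$ (the $L^{q_\epsilon}\Lb_p$ part embeds into $\bar B^0$ and is integrable in time).
\item For the Young term, \eqref{est:young-Lp} gives $P^a_{s,v}\int_v^wf_rd\gamma_r=P^a_{s,v}f_v\gamma_{v,w}+O(|w-v|^{\beta+1/2})$ in $\bar B^{-\epsilon}$, uniformly since $P^a_{s,v}$ is bounded. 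The remainders sum to $O(|\Pi|^{\beta-1/2})\to0$. The main sum $\sum P^a_{s,v}f_v\gamma_{v,w}$ consists exactly of the Riemann sums of \cref{lem:youngPdg} (ii) with $\iota=s$, so it converges in $\bar B^{-\epsilon}$ to $\int_s^tP^a_{s,r}f_rd\gamma_r$.
\end{itemize}

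This yields \eqref{id.mildheat} in $\bar B^{-1-\epsilon}$. Since every term in \eqref{id.mildheat} lies in $\bar B^{-\epsilon}$ (the $h$-integral by \eqref{est:inte-P-h-D}), and $\bar B^{-\epsilon}\hookrightarrow\bar B^{-1-\epsilon}$ is injective, the identity holds in $\bar B^{-\epsilon}$.

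The main obstacle is justifying the drift cancellation rigorously in the nonseparable Besov setting: commuting $P^a_{s,v}$ with the Bochner integrals in $\bar B^{-2-\epsilon}_{p,p}$, and differentiating $r\mapsto P^a_{s,r}u_w$ only for a.e.\ $r$. I would handle both block by block, applying $\Delta_j$, where everything reduces to $L_p(\R^d_x\times\R^d_z)$-valued smooth calculus with constants uniform in $z$, as in \cref{rem.extheat}. The finite-sum version with several $f^id\gamma^i$ follows by linearity of the same argument.
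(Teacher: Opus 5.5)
Your proposal is correct and is essentially the paper's proof. It uses the same telescoping $P^a_{s,w}u_w-P^a_{s,v}u_v=P^a_{s,v}(u_w-u_v)+(P^a_{s,w}-P^a_{s,v})u_w$ over a partition, the same two-piece drift remainder bounded by $|\Pi|^{1/2}$ in $\bar B^{-1-\epsilon}$ via \cref{lem:semi-group-reg} (i), (ii), the same treatment of the Young and $h$ terms through \eqref{est:young-Lp}, \cref{lem:youngPdg} (ii) and $\bar{\mathbb D}^\epsilon\hookrightarrow L^1([0,T];\bar B^{-\epsilon})$, and the same final upgrade from $\bar B^{-1-\epsilon}$ to $\bar B^{-\epsilon}$.
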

\begin{proof}
    Fix $\bar s<\bar t$, write $P_r:=P^a_{\bar s,r}$ for $r\ge\bar s$, and let $\Pi$ be a partition of $[\bar s,\bar t]$. For $[s,t]\in\Pi$ we have $$P_tu_t-P_su_s=P_s(u_t-u_s)+(P_t-P_s)u_t \text{  and  } (P_t-P_s)u_t=\frac12\int_s^ta^{ij}(r)\partial^2_{ij}P_ru_tdr.$$ Together with \eqref{eq:Yheat} and the fact that $a$ does not depend on $x$, we obtain
    \begin{align*}
        P_tu_t-P_su_s=A^1_{s,t}+P_s\int_s^tf_rd\gamma_r+P_s\int_s^th_rdr,
    \end{align*}
    where
    \begin{align*}
        A^1_{s,t}:=\frac12\int_s^ta^{ij}(r)P_r\partial^2_{ij}(u_t-u_r)dr+\frac12\int_s^ta^{ij}(r)(P_r-P_s)\partial^2_{ij}u_rdr.
    \end{align*}
    By \cref{lem:semi-group-reg} (i) with $\mu=0$, $\|P_r\partial^2_{ij}(u_t-u_r)\|_{\bar B^{-1-\epsilon}}\lesssim\|u_t-u_r\|_{\bar B^{1-\epsilon}}\lesssim(t-r)^{\frac12}\|u\|_{\solb^\epsilon_{2,p}}$. Since $P_r-P_s=(P^a_{s,r}-I)P_s$, (ii) with $\mu=1$ gives $\|(P_r-P_s)\partial^2_{ij}u_r\|_{\bar B^{-1-\epsilon}}\lesssim(r-s)^{\frac12}\|u_r\|_{\bar B^{2-\epsilon}}$. Hence
    \begin{align*}
        \sum_{[s,t]\in\Pi}\|A^1_{s,t}\|_{\bar B^{-1-\epsilon}}\lesssim|\Pi|^{\frac12}\|u\|_{\solb^\epsilon_{2,p}}\to0
    \end{align*}
    as $|\Pi|\to0$. Next, by \eqref{est:young-Lp}, $P_s\int_s^tf_rd\gamma_r=P_sf_s\gamma_{s,t}+R_{s,t}$ with $\|R_{s,t}\|_{\bar B^{-\epsilon}}\lesssim(t-s)^{\beta+\frac12}$, so that $\sum_\Pi R_{s,t}\to0$, while $\sum_\Pi P_sf_s\gamma_{s,t}\to\int_{\bar s}^{\bar t}P^a_{\bar s,r}f_rd\gamma_r$ in $\bar B^{-\epsilon}$ by \cref{lem:youngPdg} (ii). Finally, since $\bar{\mathbb D}^\epsilon\hookrightarrow L^1([0,T];\bar B^{-\epsilon})$, \cref{lem:semi-group-reg} (ii) with $\mu=1$ gives
    \begin{align*}
        \Big\|\sum_{[s,t]\in\Pi}P_s\int_s^th_rdr-\int_{\bar s}^{\bar t}P_rh_rdr\Big\|_{\bar B^{-1-\epsilon}}&\le\sum_{[s,t]\in\Pi}\int_s^t\|(P_r-P_s)h_r\|_{\bar B^{-1-\epsilon}}dr\\&\lesssim|\Pi|^{\frac12}\|h\|_{L^1([0,T];\bar B^{-\epsilon})}\to0.
    \end{align*}
    Since $\sum_\Pi(P_tu_t-P_su_s)=P^a_{\bar s,\bar t}u_{\bar t}-u_{\bar s}$, letting $|\Pi|\to0$ yields \eqref{id.mildheat} in $\bar B^{-1-\epsilon}$, and hence in $\bar B^{-\epsilon}$ because both sides belong to $\bar B^{-\epsilon}$.
\end{proof}

\begin{proposition}
  \label{thm:YPDE-add}
Let $a$ be as at the beginning of \cref{sub:intermediate}, let $g \in (\sol_{1,p})^{d}$, $\rho\in \solb^\epsilon_{1,p}$, $h\in \bar{\mathbb D}^\epsilon$ and $\phi\in \bar B^{2-\epsilon}$.
Let $u\in \solb^\epsilon_{2,p}$ satisfy $u_T=\phi$ and, for every $0\le s\le t\le T$, in $\bar B^{-1-\epsilon}$,
    \begin{align}
        u_t-u_s+\int_s^t\frac12 a^{ij}(r)\partial^2_{ij} u_rdr+\int_s^tg\cdot \nabla u_r  d\gamma_r=\int_s^t\rho_r d\bar\gamma_r+\int_s^th_rdr.
        \label{eq:YPDE}
    \end{align}
   Then there exists a finite constant $\mathscr{N}$  depending on $K, T, p,d, \beta,\epsilon $ and on upper bounds for $\|g\|_{\sol_{1,p}}, \|\gamma\|_{C^\beta}$ such that
    \begin{gather}  \label{est:apr-add}
    \|u\|_{\solb^\epsilon_{2,p}}
        \leq \sN\big(\|\bar\gamma\|_{C^\beta}\|\rho\|_{\solb^\epsilon_{1,p} }+\|h\|_{\bar{\mathbb D}^\epsilon}+\|\phi\|_{\bar B^{2-\epsilon}}\big).
    \end{gather}
    Moreover, there are $T_0\in(0,1]$ and $N_0$, depending only on $K,p,d,\beta,\epsilon$ and on upper bounds for $\|g\|_{\sol_{1,p}}, \|\gamma\|_{C^\beta}$, such that if $T\le T_0$, then
    \begin{gather}  \label{est:apr-add-small}
    \|u\|_{\solb^\epsilon_{2,p}}
        \leq N_0\big(T^{\beta-\frac12}\|\bar\gamma\|_{C^\beta}\|\rho\|_{\solb^\epsilon_{1,p} }+\|h\|_{\bar{\mathbb D}^\epsilon}+\|\phi\|_{\bar B^{2-\epsilon}}\big).
    \end{gather}
    The same holds if $\rho d\bar\gamma$ is replaced by a finite sum $\sum_i\rho^id\bar\gamma^i$, with $\sum_i\|\bar\gamma^i\|_{C^\beta}\|\rho^i\|_{\solb^\epsilon_{1,p}}$ on the right-hand sides.
\end{proposition}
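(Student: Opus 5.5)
The plan is to rewrite \eqref{eq:YPDE} in mild form via \cref{lem.duhamel}, estimate every term with \cref{lem:semi-group-reg} and \cref{lem:youngPdg}, absorb the $u$-dependent Young term on short intervals, and patch backward in time. Concretely, \eqref{eq:YPDE} is of the form \eqref{eq:Yheat} with the finite sum $-(g\cdot\nabla u)d\gamma+\rho\,d\bar\gamma$ in place of $fd\gamma$. Here $g\cdot\nabla u\in\solb^\epsilon_{1,p}$ by \eqref{est:nabla-h}--\eqref{est:fg}, with $\|g\cdot\nabla u\|_{\solb^\epsilon_{1,p}(J)}\lesssim\|g\|_{\sol_{1,p}}\|u\|_{\solb^\epsilon_{2,p}(J)}$ on any subinterval $J$. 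Taking $t=T$ in \eqref{id.mildheat} gives, for each $s$,
\begin{align*}
u_s=P^a_{s,T}\phi+\int_s^TP^a_{s,r}(g\cdot\nabla u_r)d\gamma_r-\int_s^TP^a_{s,r}\rho_rd\bar\gamma_r-\int_s^TP^a_{s,r}h_rdr.
\end{align*}

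First I would bound $\sup_{s\in J}\|u_s\|_{\bar B^{2-\epsilon}}$ on a terminal interval $J=[T-\tau,T]$. \cref{lem:semi-group-reg} (i) with $\mu=0$ handles $\phi$, \eqref{est:inte-P-h-D} with $\mu=2$ handles $h$, and \eqref{est:bar-2p} bounds the two Young terms by $\tau^{\beta-\frac12}\|\gamma\|_{C^\beta}\|g\|_{\sol_{1,p}}\|u\|_{\solb^\epsilon_{2,p}(J)}$ and $\tau^{\beta-\frac12}\|\bar\gamma\|_{C^\beta}\|\rho\|_{\solb^\epsilon_{1,p}}$. For the H\"older part I would write, for $s<t$ in $J$, $u_s-u_t=(P^a_{s,t}-I)u_t+[P^a_{s,t}u_t-u_s]$, where the bracket is given by \eqref{id.mildheat} on $[s,t]$. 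Then \cref{lem:semi-group-reg} (ii) with $\mu=1$ gives $\|(P^a_{s,t}-I)u_t\|_{\bar B^{1-\epsilon}}\lesssim(t-s)^{\frac12}\|u_t\|_{\bar B^{2-\epsilon}}$. The Young integrals over $[s,t]$ are bounded in $\bar B^{1-\epsilon}$ by \eqref{est:bar-1p}, which gives $(t-s)^\beta\le\tau^{\beta-\frac12}(t-s)^{\frac12}$ times the same quantities as above, and the $h$-term by \eqref{est:inte-P-h-D} with $\mu=1$ gives $(t-s)^{\frac12}\|h\|_{\bar{\mathbb D}^\epsilon}$. Altogether,
\begin{align*}
\|u\|_{\solb^\epsilon_{2,p}(J)}\le N\|\phi\|_{\bar B^{2-\epsilon}}+N\|h\|_{\bar{\mathbb D}^\epsilon}+N\tau^{\beta-\frac12}\big(\|\bar\gamma\|_{C^\beta}\|\rho\|_{\solb^\epsilon_{1,p}}+\|\gamma\|_{C^\beta}\|g\|_{\sol_{1,p}}\|u\|_{\solb^\epsilon_{2,p}(J)}\big),
\end{align*}
with $N$ independent of $T\le1$. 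Choosing $\tau$ (or $T_0$) such that $N\tau^{\beta-\frac12}\|\gamma\|_{C^\beta}\|g\|_{\sol_{1,p}}\le\frac12$ allows absorption, since $\|u\|_{\solb^\epsilon_{2,p}(J)}<\infty$ a priori. With $J=[0,T]$ and $T\le T_0$ this yields \eqref{est:apr-add-small}.

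For general $T$, I would split $[0,T]$ into $n\approx T/\tau$ intervals $J_k=[t_{k-1},t_k]$ and apply the same argument on each, with terminal datum $u_{t_k}$. This is legitimate because the restriction of $u$ to $J_k$ satisfies \eqref{eq:YPDE} there, and the lemmas apply on subintervals with the same constants. This gives $\|u\|_{\solb^\epsilon_{2,p}(J_k)}\le 2N(\|u_{t_k}\|_{\bar B^{2-\epsilon}}+\text{data})$, and iterating backward from $k=n$ yields geometric growth $(2N)^n$, which is a constant depending only on the listed quantities. The global $C^{\frac12}$ seminorm then follows from the local ones by the triangle inequality over at most $n$ intervals. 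The finite-sum version is the same argument term by term.

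The main obstacle I expect is keeping constants uniform. The smallness must come only from $\tau^{\beta-\frac12}$, so the Young-integral bounds must not degrade on subintervals, which relies on \eqref{est:bar-1p}--\eqref{est:bar-2p} holding with $\Gamma$ uniform in the interval. One must also check that the norm of $g\cdot\nabla u$ on $J$ involves only $u|_J$.
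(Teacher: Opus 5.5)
Your proposal is correct and follows essentially the same route as the paper's proof. That route is: the mild formulation from \cref{lem.duhamel} with two Young terms; the $\bar B^{2-\epsilon}$ bound from \eqref{est:bar-2p} and \eqref{est:inte-P-h-D} with $\mu=2$; the H\"older bound via the splitting into $(I-P^a_{s,t})u_t$ and $P^a_{s,t}u_t-u_s$; absorption for $T\le T_0$; and backward patching over intervals of length $T_0$. The only slip is a sign: your decomposition should read $u_s-u_t=(P^a_{s,t}-I)u_t-[P^a_{s,t}u_t-u_s]$, which is harmless since only the norms of the two pieces are used.
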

\begin{proof}
 Write $P_{s,t}=P^a_{s,t}$ and let $F:=g\cdot\nabla u$. By \cref{lem.prodS}, $\|F\|_{\solb^\epsilon_{1,p}}\lesssim\|g\|_{\sol_{1,p}}\|u\|_{\solb^\epsilon_{2,p}}$. By \cref{lem.duhamel} (with two Young terms), for every $s\le t$,
    \begin{align}\label{eq:Pt-time}
        P_{s,t}u_t-u_s=-\int_s^tP_{s,r}F_rd\gamma_r+\int_s^tP_{s,r}\rho_rd\bar\gamma_r+\int_s^tP_{s,r}h_rdr.
    \end{align}
 Put $$M:=\|\gamma\|_{C^\beta}\|F\|_{\solb^\epsilon_{1,p}}+\|\bar\gamma\|_{C^\beta}\|\rho\|_{\solb^\epsilon_{1,p}} \qquad \text{and }\qquad H:=\|h\|_{\bar{\mathbb D}^\epsilon},$$ and assume first $T\le1$. We estimate the two parts of the $\solb^\epsilon_{2,p}$-norm, using \eqref{eq:Pt-time}, \cref{lem:youngPdg} (ii) and \cref{lem:semi-group-reg}.

 First take $t=T$ in \eqref{eq:Pt-time} then use \eqref{est:a-1} with $\mu=0$, \eqref{est:bar-2p}, and \eqref{est:inte-P-h-D} with $\mu=2$, we get
 \begin{align}\label{est:a}
     \sup_{s\le T}\|u_s\|_{\bar B^{2-\epsilon}}\lesssim\|\phi\|_{\bar B^{2-\epsilon}}+T^{\beta-\frac12}M+H.
 \end{align}
Secondly for $s<t$, write $$u_t-u_s=(I-P_{s,t})u_t+(P_{s,t}u_t-u_s).$$ By \eqref{est:pt-3} with $\nu=1-\epsilon$, $\mu=1$, $\|(I-P_{s,t})u_t\|_{\bar B^{1-\epsilon}}\lesssim(t-s)^{\frac12}\|u_t\|_{\bar B^{2-\epsilon}}$. By \eqref{eq:Pt-time} on $[s,t]$, \eqref{est:bar-1p}, and \eqref{est:inte-P-h-D} with $\mu=1$,
 \begin{align}\label{est:b}
     \|P_{s,t}u_t-u_s\|_{\bar B^{1-\epsilon}}\lesssim(t-s)^\beta M+(t-s)^{\frac12}H\le(t-s)^{\frac12}\big(T^{\beta-\frac12}M+H\big).
 \end{align}
 Since also $\sup_t\|u_t\|_{\bar B^{1-\epsilon}}\le\sup_t\|u_t\|_{\bar B^{2-\epsilon}}$, combining \eqref{est:a} and \eqref{est:b} we get
 \begin{align}\label{est:usol2}\begin{split}
     \|u\|_{\solb^\epsilon_{2,p}}&\le N_1\big(\|\phi\|_{\bar B^{2-\epsilon}}+T^{\beta-\frac12}M+H\big)
     \\&\le N_1\big(\|\phi\|_{\bar B^{2-\epsilon}}+T^{\beta-\frac12}(N\|\gamma\|_{C^\beta}\|g\|_{\sol_{1,p}}\|u\|_{\solb^\epsilon_{2,p}}+\|\bar\gamma\|_{C^\beta}\|\rho\|_{\solb^\epsilon_{1,p}})+H\big),
 \end{split}\end{align}
 with $N_1,N$ independent of $T\le1$. Choosing $T_0\le1$ such that $N_1NT_0^{\beta-\frac12}\|\gamma\|_{C^\beta}\|g\|_{\sol_{1,p}}\le\frac12$, we obtain \eqref{est:apr-add-small}.

 Finally we show for arbitrary $T$. The estimate \eqref{est:apr-add-small} holds, with the same $T_0$ and $N_0$, on every interval $[S,S']\subset[0,T]$ of length at most $T_0$, with $\phi$ replaced by $u_{S'}$; this follows by a time shift, since the restrictions of $\gamma,\bar\gamma,g,\rho,h$ have norms bounded by those on $[0,T]$. Cover $[0,T]$ by the intervals $J_k:=[(T-kT_0)\vee0,T-(k-1)T_0]$, $k=1,\ldots,\lceil T/T_0\rceil$. Applying the short-time estimate successively on $J_1,J_2,\ldots$, with terminal values $u_{T-(k-1)T_0}$ whose $\bar B^{2-\epsilon}$-norms are controlled by the estimate on $J_{k-1}$, we bound $\|u\|_{\solb^\epsilon_{2,p}(J_k)}$ for every $k$. The supremum part of the norm on $[0,T]$ is the maximum over $k$, and the $C^{\frac12}$-seminorm on $[0,T]$ is bounded by the number of intervals times the maximum of the seminorms on the $J_k$. This shows \eqref{est:apr-add}.
\end{proof}

\subsection{The general case}\label{sub:general}
\begin{theorem}[A priori estimates]\label{Thm:est-roughPDE}
Suppose \cref{ass:main-PDE} holds. Let $\phi\in B^{2-\epsilon}_{p,\infty}$, $ f\in \mathbb D^\epsilon$ and $\rho \in \sol^\epsilon_{1,p}$. Let $u\in \sol^\epsilon_{2,p}$ be an $\epsilon$-solution to \eqref{main-PDE} such that $u(T)=\phi$.
 Put $\beta_1:=\min\{\beta,1-\frac\epsilon2\}$, $\lambda:=\frac12(1+\frac dp+\epsilon)$ and $\theta:=\frac\lambda2+(1-\lambda)\beta_1\in(\frac12,\beta)$. Then
\begin{align}
    \label{apriori-est}
        \|u\|_{C^{\theta}([0,T],C_b(\Rd))}+\|u\|_{C^{\beta_1}([0,T],B^{-\epsilon}_{p,\infty})} + \Vert u\Vert_{\sol^\epsilon_{2,p}}\leq \cN\big(\Vert f\Vert_{\mathbb D^\epsilon}+\Vert \bar\gamma\Vert_{C^\beta}\|\rho\|_{\sol^\epsilon_{1,p}}+\|\phi\|_{B^{2-\epsilon}_{p,\infty}}\big),
\end{align}
where $\cN$ is a finite constant depending on $K, \alpha, p, d,T,\beta,\epsilon$ and on upper bounds for $\Vert g\Vert_{ \sol_{1,p}}$, $\Vert \gamma\Vert_{C^\beta}$, $\|b\|_{\LL_{q}^\infty([0,T])}$. The same holds if $\rho d\bar\gamma$ is replaced by a finite sum $\sum_i\rho^id\bar\gamma^i$, with $\sum_i\|\bar\gamma^i\|_{C^\beta}\|\rho^i\|_{\sol^\epsilon_{1,p}}$ on the right-hand side.
\end{theorem}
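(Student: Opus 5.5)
The plan is to reduce to \cref{thm:YPDE-add} by freezing the coefficient $a$ at a point $z$ and viewing $u$ as the constant family $(\chi^z u)_z$ localized by a cutoff. Fix a smooth bump $\chi$ supported in the unit ball with $\chi=1$ near $0$, put $\chi^z_\delta(x):=\chi((x-z)/\delta)$ and $v^z:=\chi^z_\delta u$. Then $v=(v^z)_z$ satisfies, in $\bar B^{-1-\epsilon}$, an equation of the form \eqref{eq:YPDE} with frozen coefficient $a(r,z)$:
\begin{align*}
 v^z_t-v^z_s+\int_s^t\tfrac12a^{ij}(r,z)\partial^2_{ij}v^z_rdr+\int_s^tg\cdot\nabla v^z_rd\gamma_r=\int_s^t\chi^z_\delta\rho_rd\bar\gamma_r+\int_s^th^z_rdr
 +\int_s^t\tilde\rho^z_rd\gamma_r .
\end{align*}
Here $h^z=\chi^z_\delta f-\chi^z_\delta b\cdot\nabla u+\tfrac12\chi^z_\delta(a(r,z)-a(r,\cdot))\partial^2_{ij}u+[\text{terms with }\nabla\chi^z_\delta\cdot\nabla u,\ u\,\partial^2\chi^z_\delta]$ and $\tilde\rho^z=u\,g\cdot\nabla\chi^z_\delta$. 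To derive this identity, multiply \eqref{id.intsolution-PDE} by $\chi^z_\delta$; multiplication commutes with Riemann sums, by \eqref{id.Hintf}.

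Apply \eqref{est:apr-add-small} on short intervals $J$ of length $\le T_0$. The terms should be estimated as follows.

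(a) Since $\int\|\chi^z_\delta F\|^p_{L_p}dz\approx\delta^d\|F\|^p_{L_p}$, one expects $\|(\chi^z_\delta F)_z\|_{\bar B^\nu}\approx\delta^{d/p}\|F\|_{B^\nu}$ up to lower-order commutator errors. These errors are controlled using \cref{lem.prodS}(i) and the cost $\|\chi_\delta\|_{C^\varrho}\lesssim\delta^{-\varrho}$. The reverse direction is the key structural claim: $\|F\|_{B^\nu}\lesssim\delta^{-d/p}\|(\chi^z_\delta F)_z\|_{\bar B^\nu}$, which follows by integrating over $z$ after noting $\int\chi^z_\delta dz=c\delta^d$ and that $\Delta_j$ commutes with the $z$-integral.

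(b) The freezing error is the main obstacle. On the support of $\chi^z_\delta$ we have $|a(r,z)-a(r,x)|\le K\delta^\alpha$, but $\partial^2u\in B^{-\epsilon}$ is only a distribution. One therefore needs \cref{lem.prodS}(i) with $m^z=\tilde\chi^z_\delta(a(r,z)-a(r,\cdot))$, where $\tilde\chi$ is a wider cutoff, and $\varrho\in(\epsilon,\alpha)$. This gives $\sup_z\|m^z\|_{C^\varrho}\lesssim\delta^{\alpha-\varrho}$, so the error is $\lesssim\delta^{\alpha-\varrho}\|(\tilde\chi^z_\delta\partial^2u)_z\|_{\bar B^{-\epsilon}}$. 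The latter is bounded by $\delta^{\alpha-\varrho}\|v\|_{\solb^\epsilon_{2,p}}$ plus lower-order terms, after comparing the $\tilde\chi$ and $\chi$ localizations via (a). Taking $\delta$ small absorbs this term into the left side.

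(c) The drift term is handled by \eqref{est:bnablau}. The cutoff-derivative terms involve at most $\|u\|_{L^\infty B^{1-\epsilon}}$ and $\sup\|u\|_{L_\infty}$, at the price of $\delta^{-2}$. The $\tilde\rho$ term is bounded by $T_0^{\beta-\frac12}\delta^{-1}\|g\|_{\sol_{1,p}}\|u\|_{\sol^\epsilon_{1,p}}$ via \eqref{est:fg}. All of these lower-order norms are interpolated as $\|u\|_{B^{1-\epsilon}}\le\eta\|u\|_{B^{2-\epsilon}}+C_\eta\|u\|_{B^{-\epsilon}}$. The $B^{-\epsilon}$ part is bounded by $\|u_{S'}\|$ plus $T_0$ times the norm, by integrating the equation. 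This yields a short-time bound $\|u\|_{\sol^\epsilon_{2,p}(J)}\lesssim$ data $+\|u_{S'}\|_{B^{2-\epsilon}}$ once $\delta$, $\eta$ and then $T_0$ are chosen. The backward iteration from the end of the proof of \cref{thm:YPDE-add} then gives the $\sol^\epsilon_{2,p}$ bound in \eqref{apriori-est}.

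The remaining parts of \eqref{apriori-est} follow from the equation. From \eqref{id.intsolution-PDE}, \eqref{est:young-Lp} and the already controlled norms, $\|u_t-u_s\|_{B^{-\epsilon}}\lesssim|t-s|^{\beta}+|t-s|^{1-\epsilon/2}$, giving the $C^{\beta_1}$ bound. Note that the $dr$ terms in $L^{q_\epsilon}L_p$ give $|t-s|^{1-\epsilon/2}$ by Hölder. Finally, interpolate between $B^{-\epsilon}$ and $B^{1-\epsilon}$ with weight $\lambda$. The interpolated space is $B^{\lambda-\epsilon}_{p,\infty}$, and since $\lambda-\epsilon>\frac dp$ (because $\epsilon<1-\frac dp$), it embeds into $C_b$. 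This gives $\|u_t-u_s\|_{C_b}\lesssim|t-s|^{(1-\lambda)\beta_1+\lambda/2}=|t-s|^\theta$.
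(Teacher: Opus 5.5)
Your overall route is the paper's: localize into a $z$-family with frozen coefficient $a(r,z)$, apply \eqref{est:apr-add-small} on short intervals, absorb the freezing error through \cref{lem.prodS}(i) with a small $C^\varrho$-norm, and iterate backward. Your last paragraph, on time regularity, is exactly the paper's Step~3.

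However, the step you call the key structural claim is not proved by your argument. Writing $\Delta_jF=(c\delta^d)^{-1}\int\Delta_j(\chi^z_\delta F)\,dz$ and applying Minkowski bounds $\|\Delta_jF\|_{L_p}$ by $\int\|\Delta_j(\chi^z_\delta F)\|_{L_p}\,dz$. That is an $L_1(dz)$-norm over all of $\Rd$, not the $L_p(dz)$-norm defining $\bar B^\nu$. You cannot localize H\"older's inequality to a $z$-set of measure $\delta^d$, because $\Delta_j(\chi^z_\delta F)$ is not supported near $z$.

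The forward bound does not follow from \cref{lem.prodS}(i) either. That lemma acts on families already in $\bar B^\nu$ (a $z$-independent $F$ has infinite $L_p(dz)$-norm), and only for $|\nu|<1$. You need $\nu$ from $2-\epsilon$ down to $-1-\epsilon$, and also $B^{-2-\epsilon}_{p,p}$ to move the cutoff inside the $dr$-integrals, which \eqref{id.Hintf} does not cover.

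The missing idea, carried out in \cref{lem.localization}, is to integrate against a second copy of the cutoff, $R\bar F:=\int\chi^z_\delta\bar F^z\,dz$. Then $R\big((\chi^z_\delta v)_z\big)=\|\chi_\delta\|_{L_2}^2v$, and H\"older in $z$ applies pointwise in $x$ because $z\mapsto\chi^z_\delta(x)$ lies in $L_{p'}$. Combine this with Fubini, $\int\|\Delta_m\chi^z_\delta\,\Delta_lv\|_{L_p}^p\,dz=\|\Delta_m\chi_\delta\|_{L_p}^p\|\Delta_lv\|_{L_p}^p$, and with frequency bookkeeping for $\Delta_j(\Delta_m\chi^z_\delta\,\Delta_l\,\cdot)$. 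This gives both inequalities, but with constants depending on $\delta$.

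Because those constants depend on $\delta$, do not route the freezing error through ``comparing the $\tilde\chi$ and $\chi$ localizations via (a)''. That needs top-order constants in (a) that are independent of $\delta$, a more delicate statement you have not proved. It is also unnecessary. Use the product rule first, $\chi^z_\delta\partial^2_{ij}u=\partial^2_{ij}v^z-(\text{terms with }\nabla\chi^z_\delta,\nabla^2\chi^z_\delta)$. Then apply \cref{lem.prodS}(i) to the family $\partial^2v\in\bar B^{-\epsilon}$ with $m^z=\tilde\chi^z_\delta(a(r,z)-a(r,\cdot))$. This gives $N\delta^{\alpha-\varrho}\|v\|_{\solb^\epsilon_{2,p}}$ with $N$ independent of $\delta$, which is what absorption requires; the paper does exactly this.

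Second, the drift is not absorbed as you wrote it, for three reasons:
\begin{itemize}
\item \eqref{est:bnablau} gives the full-order quantity $\|b\|_{\LL^\infty_q}\|u\|_{L^\infty B^{2-\epsilon}_{p,\infty}}$.
\item The $h$-term in \eqref{est:apr-add-small} carries no small factor, since maximal regularity gains nothing in time.
\item Your interpolation of $\|u\|_{B^{1-\epsilon}_{p,\infty}}$ does not control $\|\nabla u\|_{L_\infty}$, which is needed when $q=p$.
\end{itemize}
The paper instead uses $\|\nabla u_r\|_{L_p\cap L_\infty}\lesssim\|u_r\|_{B^{1+\kappa}_{p,\infty}}$ with $\kappa\in(\frac dp,1-\epsilon)$. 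Interpolating between the two components of the $\sol^\epsilon_{2,p}$-norm gives $\|u_r-u_{S'}\|_{B^{1+\kappa}_{p,\infty}}\lesssim(S'-r)^{\frac{1-\kappa-\epsilon}2}\|u\|_{\sol^\epsilon_{2,p}(J)}$. This handles the drift and the cutoff-derivative terms at once, and makes your $\eta$-interpolation and the integration of the equation in $B^{-\epsilon}_{p,\infty}$ unnecessary.
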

\begin{proof}
We use $c_\delta$ to denote a generic finite constant depending on $\delta$ (and on the parameters of the statement), and $N$ for a constant not depending on $\delta$ and $\tau$ below. Write $B^\nu=B^\nu_{p,\infty}$. We divide the proof into three steps.

\textit{Step 1: short time intervals.} We show that there are $\tau_0\in(0,1]$ and $N_2$ such that for every interval $J=[S,S']\subset[0,T]$ with $S'-S\le\tau_0$,
\begin{align}\label{est:short}
    \|u\|_{\sol^\epsilon_{2,p}(J)}\le N_2\big(\|f\|_{\mathbb D^\epsilon}+\|\bar\gamma\|_{C^\beta}\|\rho\|_{\sol^\epsilon_{1,p}}+\|u_{S'}\|_{B^{2-\epsilon}}\big).
\end{align}
By a time shift we may assume $J=[0,\tau]$, $\tau=S'\le T$. Let $\xi$ be a nonnegative smooth function with support in the unit ball, $\xi\not\equiv0$, let $\psi\in C^\infty_c(\Rd)$ with $\psi=1$ on the unit ball and support in the ball of radius $2$, put $\xi^\delta(\cdot):=\delta^{-d}\xi(\cdot/\delta)$ for $\delta\in(0,1]$ to be chosen, $\xi^{z,\delta}(x):=\xi^\delta(x-z)$ and $\psi^{z,\delta}(x):=\psi((x-z)/\delta)$. Define the linear operators $\Xi,\Xi_k,\Xi_{kl}$ by $(\Xi v)^z:=\xi^{z,\delta}v$, $(\Xi_kv)^z:=(\partial_k\xi^{z,\delta})v$, $(\Xi_{kl}v)^z:=(\partial^2_{kl}\xi^{z,\delta})v$. By \cref{lem.localization}, applied at each fixed time and to time increments, these operators are bounded from $B^\nu$ to $\bar B^\nu$, from $L_p$ to $\Lb_p$, from $\sol^\epsilon_{k,p}$ to $\solb^\epsilon_{k,p}$ and from $\mathbb D^\epsilon$ to $\bar{\mathbb D}^\epsilon$, with norms at most $c_\delta$, and $\|v\|_{\sol^\epsilon_{k,p}}\le c_\delta\|\Xi v\|_{\solb^\epsilon_{k,p}}$.

Put $\bar u:=\Xi u$, $\bar\rho:=\Xi\rho$, $\bar f:=\Xi f$, $\bar\phi:=\Xi u_\tau$. Applying the bounded operator $\Xi:B^{-1-\epsilon}\to\bar B^{-1-\epsilon}$ to \eqref{id.intsolution-PDE}, using \eqref{id.Hintf} to interchange $\Xi$ with the Young integrals, and using that $\Xi$ is also bounded from $B^{-2-\epsilon}_{p,p}$ to $\bar B^{-2-\epsilon}_{p,p}$ (\cref{lem.localization} with $r=p$) to interchange it with the $dr$-integrals, we see that $\bar u\in\solb^\epsilon_{2,p}$ satisfies \eqref{eq:YPDE} on $[0,\tau]$ with the frozen coefficient $a(r,z)$, i.e.\ $(a(r)\partial^2_{ij}\bar u)^z=a^{ij}(r,z)\partial^2_{ij}\bar u^z$, with $\rho d\bar\gamma$ replaced by $F_2d\gamma+\bar\rho d\bar\gamma$ and $h$ replaced by $F_1+\bar f-\Xi(b\cdot\nabla u)$, where
\begin{align}\label{def:F1F2}
  F_1^z:&=\frac{1}{2}\big(a^{ij}(\cdot,z)\partial^2_{ij}\bar u^z-\xi^{z,\delta}a^{ij}\partial^2_{ij}u\big) \tand
 F_2^z:=g\cdot  \nabla \bar u^z-\xi^{z,\delta}g\cdot \nabla u=u\,g\cdot\nabla\xi^{z,\delta}.
\end{align}
In other words,
\begin{align}\label{eq:frozen}
    d_t\bar u^z+\frac12a^{ij}(t,z)\partial^2_{ij}\bar u^zdt+g\cdot \nabla  \bar u^zd\gamma_t=\big(F_1^z+ \bar f^z-\xi^{z,\delta}b\cdot\nabla u\big)dt+F_2^zd\gamma_t+\bar \rho^zd\bar \gamma_t.
\end{align}
Since $a$ is symmetric and $\psi^{z,\delta}=1$ on the support of $\xi^{z,\delta}$, the product rule gives, for every $r$,
\begin{align*}
  2F_1^z=m^z\,\partial^2_{ij}\bar u^z+2a^{ij}(r,\cdot)(\Xi_j\partial_iu)^z+a^{ij}(r,\cdot)(\Xi_{ij}u)^z,\qquad m^z:=\big(a^{ij}(r,z)-a^{ij}(r,\cdot)\big)\psi^{z,\delta}.
\end{align*}
By $(H^a_2)$, $\sup_z\|m^z\|_{C^{\alpha'}}\le NK\delta^{\alpha-\alpha'}$ with $\alpha':=\frac{\alpha+\epsilon}2\in(\epsilon,\alpha)$. Indeed, $|m^z|\le K(2\delta)^\alpha$, and for $|x-y|\le\delta$ we have $$|m^z(x)-m^z(y)|\le K|x-y|^\alpha+K(2\delta)^\alpha\|\nabla\psi\|_\infty\delta^{-1}|x-y|\lesssim K\delta^{\alpha-\alpha'}|x-y|^{\alpha'},$$ while for $|x-y|>\delta$ we have $|m^z(x)-m^z(y)|\le2K(2\delta)^\alpha\lesssim K\delta^{\alpha-\alpha'}|x-y|^{\alpha'}$. Hence, by \cref{lem.prodS} (i) with $\nu=-\epsilon$ and $\varrho=\alpha'$,
\begin{align*}
    \|m\,\partial^2_{ij}\bar u_r\|_{\bar B^{-\epsilon}}\le N\delta^{\frac{\alpha-\epsilon}2}\|\bar u_r\|_{\bar B^{2-\epsilon}}.
\end{align*}
The other two terms of $2F_1$ are functions, bounded in $\Lb_p$ by $c_\delta\|u_r\|_{L_{1,p}}$. To make them small, let $\kappa:=\frac12(\frac dp+1-\epsilon)\in(\frac dp,1-\epsilon)$. Then $B^{1+\kappa}\hookrightarrow L_{1,p}$ and $\nabla:B^{1+\kappa}\to B^\kappa\hookrightarrow L_p\cap L_\infty$, and interpolating between $B^{1-\epsilon}$ and $B^{2-\epsilon}$ (note $1+\kappa=(1-\kappa-\epsilon)(1-\epsilon)+(\kappa+\epsilon)(2-\epsilon)$ with $\kappa+\epsilon<1$),
\begin{align*}
    \|u_r-u_\tau\|_{B^{1+\kappa}}\le\|u_r-u_\tau\|_{B^{1-\epsilon}}^{1-\kappa-\epsilon}\|u_r-u_\tau\|_{B^{2-\epsilon}}^{\kappa+\epsilon}\lesssim\tau^{\kappa'}\|u\|_{\sol^\epsilon_{2,p}([0,\tau])},\qquad \kappa':=\frac{1-\kappa-\epsilon}2>0 .
\end{align*}
Hence $\sup_{r\le\tau}\|u_r\|_{B^{1+\kappa}}\lesssim\|u_\tau\|_{B^{2-\epsilon}}+\tau^{\kappa'}\|u\|_{\sol^\epsilon_{2,p}([0,\tau])}$. Using also $\|u\|_{\sol^\epsilon_{2,p}}\le c_\delta\|\bar u\|_{\solb^\epsilon_{2,p}}$, we obtain
\begin{align*}
    \|F_1\|_{L^\infty([0,\tau];\bar B^{-\epsilon})}\le\big(N\delta^{\frac{\alpha-\epsilon}2}+c_\delta\tau^{\kappa'}\big)\|\bar u\|_{\solb^\epsilon_{2,p}([0,\tau])}+c_\delta\|u_\tau\|_{B^{2-\epsilon}},
\end{align*}
and, by \eqref{est:bnablau} with $\|\nabla u_r\|_{L_p\cap L_\infty}\lesssim\|u_r\|_{B^{1+\kappa}}$, also $$\|\Xi(b\cdot\nabla u)\|_{\LLb^\infty_p([0,\tau])}\le c_\delta\|b\|_{\LL^\infty_q}(\tau^{\kappa'}\|\bar u\|_{\solb^\epsilon_{2,p}([0,\tau])}+\|u_\tau\|_{B^{2-\epsilon}}).$$ Moreover $F_2=g\cdot(\Xi_ku)_{k}$, so that \cref{lem.prodS} yields
\begin{align}\label{rem.g-critical}
  \|F_2\|_{\solb^\epsilon_{1,p}([0,\tau])}  \le c_\delta\|g\|_{\sol_{1,p}}\|u\|_{\sol^\epsilon_{1,p}([0,\tau])}\le c_\delta\|\bar u\|_{\solb^\epsilon_{2,p}([0,\tau])}.
\end{align}
We now apply \eqref{est:apr-add-small} on $[0,\tau]$, assuming $\tau\le T_0$; the constants $T_0$ and $N_0$ do not depend on $z$, since $a(\cdot,z)$ satisfies $(H^a_1)$ with the same $K$ for all $z$. We obtain
\begin{align*}
   \Vert \bar u\Vert_{\solb^\epsilon_{2,p}([0,\tau])}
   &\le N_0\big(\tau^{\beta-\frac12}\|\gamma\|_{C^\beta}\|F_2\|_{\solb^\epsilon_{1,p}}+\|\bar\gamma\|_{C^\beta}\|\bar\rho\|_{\solb^\epsilon_{1,p}}+\|F_1\|_{L^\infty\bar B^{-\epsilon}}
   \\&\qquad+\|\bar f\|_{\bar{\mathbb D}^\epsilon}+\|\Xi(b\cdot\nabla u)\|_{\LLb^\infty_p}+\|\bar\phi\|_{\bar B^{2-\epsilon}}\big)
   \\&\le N_0\big(N\delta^{\frac{\alpha-\epsilon}2}+c_\delta\tau^{\kappa'}+c_\delta\tau^{\beta-\frac12}\big)\|\bar u\|_{\solb^\epsilon_{2,p}([0,\tau])}
   \\&\qquad+c_\delta\big(\|\bar\gamma\|_{C^\beta}\|\rho\|_{\sol^\epsilon_{1,p}}+\|f\|_{\mathbb D^\epsilon}+\|u_\tau\|_{B^{2-\epsilon}}\big).
\end{align*}
We first choose $\delta$ such that $N_0N\delta^{\frac{\alpha-\epsilon}2}\le\frac14$ and then $\tau_0\le T_0$ such that $N_0c_\delta(\tau_0^{\kappa'}+\tau_0^{\beta-\frac12})\le\frac14$. For $\tau\le\tau_0$ this gives $$\|\bar u\|_{\solb^\epsilon_{2,p}([0,\tau])}\le 2c_\delta(\|\bar\gamma\|_{C^\beta}\|\rho\|_{\sol^\epsilon_{1,p}}+\|f\|_{\mathbb D^\epsilon}+\|u_\tau\|_{B^{2-\epsilon}}),$$ and \eqref{est:short} follows from $\|u\|_{\sol^\epsilon_{2,p}}\le c_\delta\|\bar u\|_{\solb^\epsilon_{2,p}}$.

\textit{Step 2: the whole interval.} Applying \eqref{est:short} successively on the intervals $[(T-k\tau_0)\vee0,T-(k-1)\tau_0]$, as in the proof of \cref{thm:YPDE-add}, yields
\begin{align}
    \label{est:Lp-L2p}
     \Vert u\Vert_{\sol^\epsilon_{2,p}([0,T])}\lesssim \Vert f\Vert_{\mathbb D^\epsilon}+\Vert \bar\gamma\Vert_{C^\beta}\Vert \rho\Vert_{\sol^\epsilon_{1,p}}+\|\phi\|_{B^{2-\epsilon}}.
\end{align}

\textit{Step 3: H\"older norms in time.} From \eqref{id.intsolution-PDE}, $$\|\int_s^t(\frac12a^{ij}\partial^2_{ij}u+b\cdot\nabla u)dr\|_{B^{-\epsilon}}\lesssim(t-s)\|u\|_{\sol^\epsilon_{2,p}}$$ by \cref{lem.prodS} (i) and \eqref{est:bnablau}; $\|\int_s^tf_rdr\|_{B^{-\epsilon}}\lesssim(t-s)^{1-\frac\epsilon2}\|f\|_{\mathbb D^\epsilon}$ by H\"older's inequality; and the Young integrals are bounded in $B^{-\epsilon}$ by $(t-s)^\beta(\|\gamma\|_{C^\beta}\|g\cdot\nabla u\|_{\sol^\epsilon_{1,p}}+\|\bar\gamma\|_{C^\beta}\|\rho\|_{\sol^\epsilon_{1,p}})$ by \cref{lem:youngPdg} (i) and \cref{lem.prodS}. Together with \eqref{est:Lp-L2p}, this gives the estimate for $\|u\|_{C^{\beta_1}([0,T],B^{-\epsilon})}$ in \eqref{apriori-est}.
To estimate $\|u\|_{C^{\theta}([0,T],C_b(\Rd))}$, we interpolate between $B^{-\epsilon}$ and $B^{1-\epsilon}$: for $s\le t$,
    \begin{align*}
        \|u_t-u_s\|_{B^{\lambda-\epsilon}}\le \|u_t-u_s\|_{B^{-\epsilon}}^{1-\lambda} \|u_t-u_s\|_{B^{1-\epsilon}}^{\lambda}\lesssim (t-s)^{(1-\lambda)\beta_1+\frac\lambda2}\big(\|u\|_{C^{\beta_1}([0,T]; B^{-\epsilon})}+\|u\|_{\sol^\epsilon_{2,p}}\big).
    \end{align*}
    Since $\lambda-\epsilon=\frac12(1+\frac dp-\epsilon)>\frac dp$ because $\epsilon<1-\frac dp$, we have $B^{\lambda-\epsilon}\hookrightarrow C_b(\Rd)$; also $\sup_t\|u_t\|_{C_b}\lesssim\sup_t\|u_t\|_{B^{2-\epsilon}}$. This gives \eqref{apriori-est}, and $\theta\in(\frac12,\beta)$ because $\lambda<1$ and $\beta_1>\frac12$. The case of finitely many Young terms $\sum_i\rho^id\bar\gamma^i$ is identical.
\end{proof}

\begin{corollary}[Stability] \label{cor:stab}
Suppose \cref{ass:main-PDE} holds and let $\rho \in \sol^\epsilon_{1,p}$. For $i=1,2$, let $ f^i\in \mathbb D^\epsilon$, let $\gamma$, $\bar\gamma^i$ be  $\beta$-H\"older  paths, and let $u^i\in \sol^\epsilon_{2,p}$ be $\epsilon$-solutions to \eqref{main-PDE} with $f=f^i$, $\bar\gamma=\bar\gamma^i$. Then, with $\theta$ and $\cN$ from \cref{Thm:est-roughPDE},
\begin{align}
    \label{est:stab}
    \begin{split}
        &\|u^1-u^2\|_{C^{\theta}([0,T],C_b(\Rd))}+\Vert u^1-u^2\Vert_{\sol^\epsilon_{2,p}}
        \\&\leq \cN\big(\Vert f^1-f^2\Vert_{\mathbb D^\epsilon}+\Vert \bar\gamma^1-\bar\gamma^2\Vert_{C^\beta}\|\rho\|_{\sol^\epsilon_{1,p}}+\|u^1(T)-u^2(T)\|_{B^{2-\epsilon}_{p,\infty}} \big).
    \end{split}
\end{align}
\end{corollary}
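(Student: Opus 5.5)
The plan is to use linearity of \eqref{main-PDE} in the data $(f,\rho\,d\bar\gamma,\phi)$ for fixed coefficients $a,b,g,\gamma$, and then apply \cref{Thm:est-roughPDE} to the difference $w:=u^1-u^2$. Since $u^1,u^2\in\sol^\epsilon_{2,p}$, we have $w\in\sol^\epsilon_{2,p}$.

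Subtracting the two identities \eqref{id.intsolution-PDE} in $B^{-1-\epsilon}_{p,\infty}$ for every $s\le t$ is justified term by term:
\begin{itemize}
\item the $dr$-integrals are Bochner integrals, hence linear;
\item the Young integrals $\int(g\cdot\nabla u^i_r)\,d\gamma_r$ are limits of Riemann sums that are linear in $u^i$, so their difference is $\int(g\cdot\nabla w_r)\,d\gamma_r$, which exists by \cref{lem:youngPdg} since $g\cdot\nabla w\in\sol^\epsilon_{1,p}$ by \cref{lem.prodS};
\item for the forcing Young terms, the Riemann sums $\sum\rho_s\bar\gamma^1_{s,t}-\sum\rho_s\bar\gamma^2_{s,t}=\sum\rho_s(\bar\gamma^1-\bar\gamma^2)_{s,t}$ converge, so $\int\rho\,d\bar\gamma^1-\int\rho\,d\bar\gamma^2=\int\rho\,d(\bar\gamma^1-\bar\gamma^2)$, where $\bar\gamma^1-\bar\gamma^2\in C^\beta$.
\end{itemize}
Consequently $w$ is an $\epsilon$-solution of \eqref{main-PDE} with $f$ replaced by $f^1-f^2\in\mathbb D^\epsilon$, $\bar\gamma$ replaced by $\bar\gamma^1-\bar\gamma^2$, the same $\rho$, and terminal value $w_T=u^1(T)-u^2(T)\in B^{2-\epsilon}_{p,\infty}$.

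We then apply \eqref{apriori-est} to $w$. This gives the $C^\theta([0,T],C_b)$ and $\sol^\epsilon_{2,p}$ bounds in \eqref{est:stab}, with the right-hand side $\|f^1-f^2\|_{\mathbb D^\epsilon}+\|\bar\gamma^1-\bar\gamma^2\|_{C^\beta}\|\rho\|_{\sol^\epsilon_{1,p}}+\|u^1(T)-u^2(T)\|_{B^{2-\epsilon}_{p,\infty}}$. The constant $\cN$ is the one from \cref{Thm:est-roughPDE}, which depends only on $K,\alpha,p,d,T,\beta,\epsilon$ and on bounds for $g,\gamma,b$. It does not depend on $\bar\gamma$, because $\bar\gamma$ enters \eqref{apriori-est} only linearly on the right-hand side. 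This is why the same $\cN$ works for the difference path.

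There is no substantial obstacle. The only point needing care is confirming that the proof of \cref{Thm:est-roughPDE} did not use $\bar\gamma$ beyond the factor $\|\bar\gamma\|_{C^\beta}\|\rho\|_{\sol^\epsilon_{1,p}}$, so that the estimate is legitimately applied with $\bar\gamma^1-\bar\gamma^2$; inspection of \cref{thm:YPDE-add} and Step 1 confirms this.
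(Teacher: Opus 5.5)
Your proposal is correct and follows the paper's proof exactly: the difference $u^1-u^2$ solves \eqref{main-PDE} with data $f^1-f^2$, Young term $\rho\,d(\bar\gamma^1-\bar\gamma^2)$ and terminal value $u^1(T)-u^2(T)$, and \eqref{est:stab} then follows from \eqref{apriori-est}. Your term-by-term justification of the subtraction, using linearity of the Riemann sums for the Young integrals, is a detail the paper leaves implicit.
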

\begin{proof}
The difference $u:=u^1-u^2$ is a solution to
\begin{align*}
     d_t u+(\frac{1}{2}a^{ij}\partial^2_{ij}u+b\cdot \nabla u)dt+(g\cdot \nabla u) d\gamma_t=(f^1-f^2)dt +\rho d (\bar\gamma_t^1-\bar\gamma_t^2),
\end{align*}
so \eqref{est:stab} follows from \eqref{apriori-est}.
\end{proof}
\begin{theorem}
    [Existence and Uniqueness]\label{thm:solution-YPDE}
    Suppose \cref{ass:main-PDE} holds, let $\phi\in B^{2-\epsilon}_{p,\infty}$, $ f\in \mathbb D^\epsilon$ and $\rho \in \sol^\epsilon_{1,p}$. Then there exists a unique $\epsilon$-solution $u\in \sol^\epsilon_{2,p}$ to \eqref{main-PDE} such that $u(T)=\phi$. If, in addition, $\gamma$ and $\bar\gamma$ are continuously differentiable, then $\partial_tu\in\mathbb D^\epsilon$.
\end{theorem}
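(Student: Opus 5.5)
Uniqueness follows directly from linearity and the a priori estimate. If $u^1,u^2$ are two $\epsilon$-solutions with $u^i(T)=\phi$, their difference solves \eqref{main-PDE} with $f=0$, $\rho=0$ and zero terminal value. \cref{cor:stab}, or \cref{Thm:est-roughPDE} directly, then gives $\|u^1-u^2\|_{\sol^\epsilon_{2,p}}=0$.

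For existence I would use the method of continuity in the parameter $\lambda\in[0,1]$. The family of operators is
\[
L_\lambda u:=d_tu+\Big(\tfrac12 a_\lambda^{ij}\partial^2_{ij}u+\lambda b\cdot\nabla u\Big)dt+\lambda(g\cdot\nabla u)d\gamma_t,\qquad a_\lambda:=(1-\lambda)I+\lambda a .
\]
Each $a_\lambda$ satisfies $(H^a_1)$, $(H^a_2)$ with the same $K$, and $\lambda g$, $\lambda b$ are bounded uniformly in $\lambda$. Hence \cref{Thm:est-roughPDE} holds with a constant $\cN$ independent of $\lambda$.

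The starting point is $\lambda=0$. There the solution is explicit:
\[
u_s=P_{T-s}\phi+\int_s^TP_{r-s}\rho_rd\bar\gamma_r-\int_s^TP_{r-s}f_rdr .
\]
By \cref{lem:youngPdg}(ii) and \cref{lem:semi-group-reg} it lies in $\sol^\epsilon_{2,p}$; the $C^{1/2}$ bound comes from the splitting already used in the proof of \cref{thm:YPDE-add}. To verify the integral identity \eqref{id.intsolution-PDE}, I would compute $u_t-u_s$ through the semigroup property and the Fubini-type identity $\int_s^t\partial^2 P\cdots$. I expect this verification, the converse of \cref{lem.duhamel}, to be the main technical obstacle: one has to justify exchanging the Young integral with the time integral of $\Delta P_{r-\cdot}$. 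My first attempt would be to do this on Riemann sums in $B^{-1-\epsilon}_{p,\infty}$, using \eqref{id.Hintf}.

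For the continuation step, suppose $L_{\lambda_0}$ is solvable for every datum $(f,\rho,\phi)$. For $\lambda=\lambda_0+\eta$, define $\Phi(v)$ as the solution of
\[
L_{\lambda_0}w=f\,dt+\rho\, d\bar\gamma-\eta\Big(\tfrac12(a-I)^{ij}\partial^2_{ij}v+b\cdot\nabla v\Big)dt-\eta(g\cdot\nabla v)d\gamma,\qquad w_T=\phi .
\]
The perturbation data lie in $\mathbb D^\epsilon$ and $\sol^\epsilon_{1,p}$, with norms $\lesssim\|v\|_{\sol^\epsilon_{2,p}}$ by \cref{lem.prodS} and \eqref{est:bnablau}. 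Applying \cref{Thm:est-roughPDE} (in its version with two Young terms) shows that $\Phi$ is a contraction on $\sol^\epsilon_{2,p}$ once $\eta\le\eta_0$. Here $\eta_0$ depends only on the uniform constant $\cN$, so finitely many steps reach $\lambda=1$. One still needs to check that $\sol^\epsilon_{2,p}$ is complete, which holds by the measurability convention, and that a fixed point is an $\epsilon$-solution, which follows by passing to the limit in each term of \eqref{id.intsolution-PDE} using the Young bounds.

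Finally, suppose $\gamma,\bar\gamma\in C^1$. Then the Young integrals are Lebesgue integrals with integrands $g\cdot\nabla u\,\dot\gamma$ and $\rho\dot{\bar\gamma}$, which lie in $L^\infty B^{1-\epsilon}\subset\mathbb D^\epsilon$. Since $a^{ij}\partial^2_{ij}u\in L^\infty B^{-\epsilon}$, $b\cdot\nabla u\in\LL^\infty_p$ and $f\in\mathbb D^\epsilon$, the identity \eqref{id.intsolution-PDE} exhibits $u$ as an absolutely continuous function with
\[
\partial_tu=-\tfrac12a^{ij}\partial^2_{ij}u-b\cdot\nabla u-g\cdot\nabla u\,\dot\gamma+\rho\dot{\bar\gamma}+f\in\mathbb D^\epsilon .
\]
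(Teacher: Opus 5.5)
Your argument is sound, but it reaches existence by a different route from the paper.

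\textbf{The paper's route.} The paper never runs the continuity method with a Young term present. It first takes $\gamma,\bar\gamma\in C^1$, so that all integrals are Lebesgue integrals, with $\rho\bar\gamma'$ put into the $dt$-forcing. It then applies the method of continuity to $L_\tau u=\partial_tu+\frac12a^{\tau,ij}\partial^2_{ij}u+\tau(b+\gamma'g)\cdot\nabla u$, viewed as a map from $X=\{u\in\sol^\epsilon_{2,p}:\partial_tu\in\mathbb D^\epsilon,\ u(T)=0\}$ to $Y=\mathbb D^\epsilon$. General paths are reached by smooth approximation: uniform bounds come from \cref{Thm:est-roughPDE} in $C^\beta$, and the Cauchy property from the same estimate in $C^{\beta'}$ with $\beta'<\beta$, since smooth approximations converge only in $C^{\beta-}$. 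A limit is then taken in \eqref{id.intsolution-PDE}. The claim $\partial_tu\in\mathbb D^\epsilon$ for $C^1$ paths is inherited from the construction in $X$ via uniqueness.

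\textbf{What your route buys and costs.} Homotoping the Young coefficient as well removes the approximation step and the loss $\beta\to\beta'$. Reading $\partial_tu$ directly off the identity is also fine. The price is the base case: you need a converse of \cref{lem.duhamel} with Young forcing, whereas the paper's base case $L_0=\partial_t+\frac12\Delta$ has only $dt$-forcing.

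\textbf{The base case.} This step is easier than you expect and needs no Fubini theorem for Young integrals. Once the mild candidate is known to lie in $\sol^\epsilon_{2,p}$, the defect $Z_{s,t}:=u_t-u_s+\frac12\int_s^t\Delta u_rdr-\int_s^t\rho_rd\bar\gamma_r-\int_s^tf_rdr$ is additive in $(s,t)$. By the semigroup property (pulling $P_{t-s}$ through the Riemann sums as in \eqref{id.Hintf}) one has $u_s=P_{t-s}u_t-\int_s^tP_{r-s}\rho_rd\bar\gamma_r-\int_s^tP_{r-s}f_rdr$. Combining this with $P_{t-s}-I=\frac12\int_s^t\Delta P_{r-s}dr$ gives $Z_{s,t}=\frac12\int_s^t\Delta(u_r-P_{r-s}u_t)dr+\int_s^t(P_{r-s}-I)\rho_rd\bar\gamma_r+\int_s^t(P_{r-s}-I)f_rdr$. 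By \cref{lem:semi-group-reg} and \cref{lem:sewing}, this is bounded in $B^{-1-\epsilon}_{p,\infty}$ by a multiple of $(t-s)^{\kappa}$ with $\kappa=\min\{\beta+\frac12,\frac32-\frac\epsilon2\}>1$. Hence $Z\equiv0$.

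\textbf{Corrections.} By \cref{lem.duhamel} with $t=T$, the Young term in your explicit formula has the same sign as the $f$-term: $u_s=P_{T-s}\phi-\int_s^TP_{r-s}\rho_rd\bar\gamma_r-\int_s^TP_{r-s}f_rdr$. Since $\Phi(v)$ contains the forcing $-\eta\,g\cdot\nabla v\,d\gamma$, the statement propagated along the induction must be solvability with Young forcing along both $\bar\gamma$ and $\gamma$. For example, take it for every driving path in $C^\beta$ and combine by linearity; the base case supplies this. Finally, no limit is needed to see that the fixed point $w=\Phi(w)$ solves the equation at level $\lambda_0+\eta$. The identity defining $\Phi(w)$ rearranges to it by linearity of the integrals.
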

\begin{proof} Uniqueness follows directly from \cref{cor:stab}. Existence is shown in two steps.

\textit{Step 1: $\gamma,\bar\gamma\in C^1([0,T])$.} In this case, the Young integrals in \eqref{id.intsolution-PDE} are Riemann--Stieltjes integrals, e.g.\ $\int_s^t g\cdot\nabla u_r d\gamma_r=\int_s^t\gamma'_rg\cdot\nabla u_rdr$. We use the method of continuity in the spaces
\begin{align*}
    X:=\{u\in\sol^\epsilon_{2,p}:\partial_tu\in\mathbb D^\epsilon,\ u(T)=0\},\quad \|u\|_X:=\|u\|_{\sol^\epsilon_{2,p}}+\|\partial_tu\|_{\mathbb D^\epsilon},\qquad Y:=\mathbb D^\epsilon.
\end{align*}
For $\tau\in[0,1]$ put $a^\tau:=(1-\tau)I+\tau a$, which satisfies \cref{ass:main-PDE} $(H_1^a)$, $(H_2^a)$ with the same $K$ and $\alpha$, and
\begin{align*}
    L_\tau u:=\partial_tu+\frac12a^{\tau,ij}\partial^2_{ij}u+\tau(b+\gamma'g)\cdot\nabla u.
\end{align*}
By \cref{lem.prodS} (i) and \eqref{est:bnablau}, $L_\tau:X\to Y$ is bounded, uniformly in $\tau$. 
Furthermore, by \cref{Thm:est-roughPDE}, $\|u\|_{\sol^\epsilon_{2,p}}\le N\|L_\tau u\|_Y$, where $N$ does not depend on $\tau$.
It is evident that the operator $L_0=\partial_t+\frac12\Delta$ is a bijection from $X$ onto $Y$. 
By the method of continuity, $L_1:X\to Y$ is a bijection.

Now let $\psi_s:=P^I_{s,T}\phi$. Then $\psi\in\sol^\epsilon_{2,p}$ by \cref{lem:semi-group-reg}, and $\partial_s\psi=-\frac12\Delta\psi\in L^\infty([0,T];B^{-\epsilon}_{p,\infty})$. Let $w\in X$ be such that $L_1w=f+\rho\bar\gamma'-L_1\psi\in Y$ (note that $\rho\bar\gamma'\in\LL^\infty_p$ by \eqref{eq:S1e-emb}). Then $u:=w+\psi\in\sol^\epsilon_{2,p}$, $\partial_tu\in Y$, $u(T)=\phi$, and integrating $\partial_tu$ in time shows that $u$ is a solution to \eqref{main-PDE}.

\textit{Step 2: general $\gamma,\bar\gamma$.} Fix $\beta'\in(\frac12,\beta)$, so that all results of this section apply with $\beta'$ in place of $\beta$. Let $(\gamma^n)_n,(\bar \gamma^n)_n\subset C^\infty([0,T])$ be sequences converging to $\gamma,\bar\gamma$ in $C^{\beta-}([0,T])$, so that they are bounded in $C^\beta$ and converge in $C^{\beta'}$. Let $u^n\in\sol^\epsilon_{2,p}$ be the solution from Step 1 to
\begin{align}\label{eq:pde-un}
  d_t u^n+(\frac{1}{2}a^{ij}\partial^2_{ij}u^n+b\cdot \nabla u^n)dt+(g\cdot \nabla u^n) d \gamma^n_t=f dt+\rho d\bar\gamma_t^n,
  \qquad
   u^n(T,\cdot)=\phi.
\end{align}
By \cref{Thm:est-roughPDE}, $\sup_n\|u^n\|_{\sol^\epsilon_{2,p}}=:N_*<\infty$. The difference $u^n-u^m$ solves \eqref{main-PDE} with $\gamma^n$ in place of $\gamma$, $f=0$, zero terminal value and the two Young terms $\rho d(\bar\gamma^n-\bar\gamma^m)-g\cdot\nabla u^md(\gamma^n-\gamma^m)$. Since $\|g\cdot\nabla u^m\|_{\sol^\epsilon_{1,p}}\lesssim\|g\|_{\sol_{1,p}}N_*$ by \cref{lem.prodS}, \cref{Thm:est-roughPDE} (with $\beta'$) gives
\begin{align*}
    \|u^n-u^m\|_{\sol^\epsilon_{2,p}}\lesssim\|\bar\gamma^n-\bar\gamma^m\|_{C^{\beta'}}\|\rho\|_{\sol^\epsilon_{1,p}}+\|\gamma^n-\gamma^m\|_{C^{\beta'}}\|g\|_{\sol_{1,p}}N_*\to0,
\end{align*}
with constants uniform in $n,m$. Hence $u^n\to u$ in $\sol^\epsilon_{2,p}$ for some $u\in\sol^\epsilon_{2,p}$ with $u(T)=\phi$. We pass to the limit in \eqref{id.intsolution-PDE} for $u^n$ in $B^{-1-\epsilon}_{p,\infty}$: the $dr$-integrals converge by \cref{lem.prodS} (i) and \eqref{est:bnablau}, and, writing $$\int g\cdot\nabla u^nd\gamma^n-\int g\cdot\nabla ud\gamma=\int g\cdot\nabla(u^n-u)d\gamma^n+\int g\cdot\nabla u\,d(\gamma^n-\gamma),$$ the Young integrals converge by \eqref{est:young-Lp} (with $\beta'$) and \cref{lem.prodS}; the same applies to $\int\rho d\bar\gamma^n$. Thus $u$ is a solution to \eqref{main-PDE}. Finally, if $\gamma,\bar\gamma\in C^1$, the solution from Step 1 satisfies $\partial_tu\in\mathbb D^\epsilon$ and coincides with $u$ by uniqueness.
\end{proof}
\section{Krylov estimates for rough SDEs and tightness}
\label{sec:tightness}
Throughout this section we assume that $\gamma\in C^\beta([0,T])$ with $\beta\in(\frac12,1)$ and that \cref{ass:main} holds, and we put $a:=\sigma\sigma^\top$; thus $a$, $b$, $g$ satisfy \cref{ass:main-PDE}, with the H\"older exponent $\alpha$ of $\sigma$ and, in $(H^a_2)$, with $K$ replaced by $2K^{3/2}$. 
Let $\epsilon_0$ be as in \eqref{def:eps0}, and put
\begin{align}\label{def:beta-eps}
    \beta_0:=\frac12\Big(\beta+\frac12\Big)\in\Big(\frac12,\beta\Big),\qquad q_*:=\frac4{\epsilon_0}.
\end{align}
All results of \cref{sec:PDE} apply with $\beta_0$ in place of $\beta$. The point of $\beta_0$ is that smooth approximations of $\gamma$ converge in $C^{\beta_0}$, but in general not in $C^\beta$.

In this section we prove Krylov-type estimates for the approximations of the solution to the Young SDE \eqref{eq:rough-SDE} obtained by mollifying $\gamma$. These estimates connect the Young PDE \eqref{main-PDE} with the laws of the approximating solutions, and they yield candidate martingale (or weak) solutions through a tightness argument.

Let $(\gamma^n)_n\subset C^\infty([0,T])$ be a sequence converging to $\gamma$ in $C^{\beta-}([0,T])$.
Let $\varphi_n(x)=n^d\varphi(nx)$ for a probability density $\varphi\in C^\infty_c(\Rd)$, and mollify $\sigma$ and $b$ in space:
\begin{align*}
    \sigma^n(t,\cdot):=\sigma(t,\cdot)\ast\varphi_n,\qquad b^n(t,\cdot):=b(t,\cdot)\ast\varphi_n,\qquad a^n:=\sigma^n(\sigma^n)^\top.
\end{align*}
The coefficient $g$ is not mollified. 
By changing the constant $K$ in \cref{ass:main-PDE} we can assume that $a^n,b^n,g$ satisfy \cref{ass:main-PDE} uniformly in $n$.
Moreover, $\|b^n\|_{\LL^\infty_q}\le\|b\|_{\LL^\infty_q}$; for each fixed $n$, $a^n(t,\cdot)$ has bounded derivatives of all orders, and $b^n$ and $\nabla b^n$ belong to $L^\infty([0,T];L_q\cap L_\infty)$, with bounds depending on $n$; and, for every $R>0$ and $q'<\infty$,
\begin{align}\label{eq:bn-conv}
    \lim_{n\to\infty}\|(b^n-b)\1_{B_R}\|_{\LL^{q'}_p}=0,
\end{align}
by the convergence of mollifications in $L_p(B_R)$ for each $t$ and dominated convergence in $t$. For each $n$ and $x\in\Rd$ let $X^n=X^n(x)$, defined on a filtered probability space $(\Omega,\cF,(\cF_t),\mP)$ with an $(\cF_t)$-Brownian motion $W$, be a weak solution to the mollified equation
\begin{align}
    \label{eq:SDE-m} dX_t^n=b^n(t,X_t^n)dt+g(t,X_t^n)d\gamma_t^n+\sigma^n(t,X_t^n)dW_t,\quad X_0^n=x .
\end{align}
Since the drift $b^n+g\,(\gamma^n)'$ is bounded and measurable, and $\sigma^n$ is bounded, continuous and nondegenerate, such a solution exists \cite[Chapter 7]{SV}. The results below hold for every such solution.

We use It\^o's formula in the following form, for functions which are twice continuously differentiable in $x$ but only absolutely continuous in $t$.
\begin{lemma}[It\^o's formula]\label{lem:Ito}
    Let $W$ be an $(\cF_t)$-Brownian motion in $\Rd$ on a filtered probability space $(\Omega,\cF,(\cF_t),\mP)$, and let $X$ be a continuous adapted process with
    \begin{align*}
        X_t=X_0+\int_0^t\beta_rdr+\int_0^t\varsigma_rdW_r,\qquad t\in[0,T],
    \end{align*}
    where $\beta$ and $\varsigma$ are bounded progressively measurable processes with values in $\Rd$ and $\R^{d\times d}$. Let $u:[0,T]\times\Rd\to\R$ be such that $u$, $\nabla u$ and $\nabla^2u$ are bounded and continuous on $[0,T]\times\Rd$, and assume that there is a bounded measurable $v:[0,T]\times\Rd\to\R$ such that $x\mapsto v(r,x)$ is uniformly continuous, uniformly in $r\in[0,T]$, and
    \begin{align}\label{eq:u-ac}
        u(t,x)-u(s,x)=\int_s^tv(r,x)dr\qquad\text{for all }0\le s\le t\le T,\ x\in\Rd.
    \end{align}
    Then, almost surely, for all $0\le s\le t\le T$,
    \begin{align}\label{eq:Ito-classical}
        u(t,X_t)=u(s,X_s)+\int_s^t\Big(v+\beta_r\cdot\nabla u+\frac12\mathrm{tr}\big(\varsigma_r\varsigma_r^\top\nabla^2u\big)\Big)(r,X_r)dr+\int_s^t\nabla u(r,X_r)\cdot\varsigma_rdW_r.
    \end{align}
\end{lemma}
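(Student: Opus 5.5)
The plan is to reduce to the classical Itô formula for $C^{1,2}$ functions by mollifying $u$ in time. For $h>0$, extend $u$ and $v$ constantly outside $[0,T]$: set $u(r,x)=u(0,x)$ for $r<0$ and $u(r,x)=u(T,x)$ for $r>T$, and set $v=0$ there. Then \eqref{eq:u-ac} continues to hold on all of $\R$. Define
\[
u^h(t,x):=\frac1h\int_t^{t+h}u(r,x)\,dr .
\]
By \eqref{eq:u-ac}, $u^h$ is $C^1$ in $t$ with
\[
\partial_tu^h(t,x)=\frac1h\big(u(t+h,x)-u(t,x)\big)=\frac1h\int_t^{t+h}v(r,x)\,dr=:v^h(t,x),
\]
which is continuous in $(t,x)$. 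Continuity in $x$ holds by the uniform continuity of $v$ in $x$, uniformly in $r$; continuity in $t$ holds because $v$ is bounded. Moreover, $\nabla u^h$ and $\nabla^2u^h$ are time averages of the bounded continuous functions $\nabla u$ and $\nabla^2u$, so they are bounded and continuous. Hence $u^h\in C^{1,2}_b$, and the classical Itô formula gives, almost surely and for all $s\le t$,
\[
u^h(t,X_t)=u^h(s,X_s)+\int_s^t\Big(v^h+\beta_r\cdot\nabla u^h+\tfrac12\mathrm{tr}\big(\varsigma_r\varsigma_r^\top\nabla^2u^h\big)\Big)(r,X_r)\,dr+\int_s^t\nabla u^h(r,X_r)\cdot\varsigma_r\,dW_r .
\]

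Next, let $h\downarrow0$ along a sequence. Since $u$, $\nabla u$ and $\nabla^2 u$ are bounded and uniformly continuous on compact subsets of $[0,T]\times\Rd$, we have $u^h\to u$, $\nabla u^h\to\nabla u$ and $\nabla^2u^h\to\nabla^2u$ locally uniformly, with uniform bounds. The $X$ path is continuous, hence stays in a compact set on $[0,T]$. Therefore the left side and the first term on the right converge for all $s,t$. The $dr$-integrals of the $\beta$ and $\varsigma$ terms converge by bounded convergence. For the stochastic integrals, $\mE\sup_t|\int_0^t(\nabla u^h-\nabla u)(r,X_r)\cdot\varsigma_r\,dW_r|^2\to0$ by the BDG inequality and dominated convergence. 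Passing to a subsequence, this convergence is almost sure and uniform in $t$.

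The main obstacle is the term $\int_s^tv^h(r,X_r)\,dr$, because $v$ is only measurable in $r$ and we cannot evaluate it along the path directly. The first step is to use the uniform continuity of $v$ in $x$:
\[
\Big|v^h(r,X_r)-\frac1h\int_r^{r+h}v(\tau,X_\tau)\,d\tau\Big|\le\sup_{|\tau-r|\le h}\omega\big(|X_r-X_\tau|\big),
\]
where $\omega$ is the uniform modulus of continuity of $v$ in $x$. This tends to $0$ uniformly in $r$ by the uniform continuity of the path of $X$ on $[0,T]$. The second step is to show that
\[
\int_s^t\frac1h\int_r^{r+h}v(\tau,X_\tau)\,d\tau\,dr\longrightarrow\int_s^tv(\tau,X_\tau)\,d\tau .
\]
This holds because $\tau\mapsto v(\tau,X_\tau)$ is a bounded measurable function of time, for fixed $\omega$. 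By Fubini, the left side equals $\int\!k_h(\tau)\,v(\tau,X_\tau)\,d\tau$, where $k_h$ is a weight in $[0,1]$ that equals $1$ on $[s+h,t]$; hence the error is at most $2h\|v\|_\infty$. Combining these limits yields \eqref{eq:Ito-classical} almost surely for all $s\le t$ simultaneously, since every convergence above is uniform in $s,t$.
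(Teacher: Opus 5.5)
Your proof is correct, but it takes a different route from the paper. The paper never regularizes $u$; it discretizes time instead. On each interval $[t_k,t_{k+1}]$ of a partition, it splits the increment of $u(\cdot,X_\cdot)$ into two pieces:
\begin{itemize}
\item a purely spatial increment $u(t_{k+1},X_{t_{k+1}})-u(t_{k+1},X_{t_k})$, handled by the time-independent It\^o formula for the $C^2_b$ function $u(t_{k+1},\cdot)$;
\item a purely temporal increment $\int_{t_k}^{t_{k+1}}v(r,X_{t_k})\,dr$, read off from \eqref{eq:u-ac} at the frozen point $X_{t_k}$.
\end{itemize}
As the mesh vanishes, uniform continuity of $v$ in $x$ lets the paper replace $v(r,X_{t_k})$ by $v(r,X_r)$. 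It proves the identity for fixed $s<t$ and extends it to all $s\le t$ by continuity.

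Your Steklov average $u^h$ instead produces a genuine $C^{1,2}_b$ function in one step, so you can invoke the time-dependent It\^o formula directly. Because all your limits are uniform in $(s,t)$, you get the simultaneous statement without a separate continuity argument. The price is the extension beyond $T$ and the Fubini/kernel computation for the $v^h$-term. In the extension you should also set, say, $X_\tau:=X_T$ for $\tau>T$; this is harmless since $v=0$ there.

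Both proofs use the hypothesis on $v$ at the same point. Uniform continuity in $x$, uniformly in $r$, is exactly what allows moving the spatial argument of the merely measurable-in-time $v$ along the continuous path: from $X_r$ to $X_\tau$ in your argument, and from $X_{t_k}$ to $X_r$ in the paper's. The paper's route needs only the classical It\^o formula for time-independent functions and stays on $[0,T]$. Yours is a more direct reduction to a standard result.
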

\begin{proof}
    Both sides of \eqref{eq:Ito-classical} are continuous in $(s,t)$, so it suffices to prove \eqref{eq:Ito-classical} almost surely for fixed $s<t$. Let $\pi=\{s=t_0<t_1<\dots<t_N=t\}$ be a partition, and for $r\in[t_k,t_{k+1})$ put $\tau(r):=t_k$ and $\tau^+(r):=t_{k+1}$. For each $k$,
    \begin{align*}
        u(t_{k+1},X_{t_{k+1}})-u(t_k,X_{t_k})=\big(u(t_{k+1},X_{t_{k+1}})-u(t_{k+1},X_{t_k})\big)+\int_{t_k}^{t_{k+1}}v(r,X_{t_k})dr,
    \end{align*}
    by \eqref{eq:u-ac} at the point $x=X_{t_k}$. Applying the classical It\^o formula to the $C^2_b$-function $u(t_{k+1},\cdot)$ on $[t_k,t_{k+1}]$ and summing over $k$, we obtain
    \begin{align*}
        u(t,X_t)-u(s,X_s)&=\int_s^t\Big(v(r,X_{\tau(r)})+\beta_r\cdot\nabla u(\tau^+(r),X_r)+\frac12\mathrm{tr}\big(\varsigma_r\varsigma_r^\top\nabla^2u(\tau^+(r),X_r)\big)\Big)dr
        \\&\qquad+\int_s^t\nabla u(\tau^+(r),X_r)\cdot\varsigma_rdW_r.
    \end{align*}
    Let the mesh of $\pi$ tend to $0$. Since the path of $X$ is continuous, hence bounded, and $\nabla u$, $\nabla^2u$ are uniformly continuous on bounded subsets of $[0,T]\times\Rd$, we have $\nabla u(\tau^+(r),X_r)\to\nabla u(r,X_r)$ and $\nabla^2u(\tau^+(r),X_r)\to\nabla^2u(r,X_r)$ for every $r$, almost surely. Moreover, $|v(r,X_{\tau(r)})-v(r,X_r)|\le\omega_v(\sup_{|r-r'|\le|\pi|}|X_r-X_{r'}|)\to0$ uniformly in $r$, where $\omega_v$ is the modulus of continuity of $v$ in $x$. By dominated convergence, the $dr$-integrals converge almost surely, and, by It\^o's isometry, the stochastic integrals converge in $L_2(\mP)$. This proves \eqref{eq:Ito-classical}.
\end{proof}

The next lemma shows that, for the mollified coefficients, the solutions of the Kolmogorov equations satisfy the assumptions of \cref{lem:Ito}.
\begin{lemma}\label{lem:reg-un}
    Let $\epsilon\in[0,\epsilon_0)$, let $a,b,g$ satisfy \cref{ass:main-PDE}, and assume in addition that $a(t,\cdot)$ has bounded first and second derivatives and that $b,\nabla b\in L^\infty([0,T];L_q\cap L_\infty)$, uniformly in $t$. Let $\gamma,\bar\gamma\in C^1([0,T])$, $f\in C^\infty_c([0,T]\times\Rd)$ and $\rho\in\sol^\epsilon_{1,p}$, and let $u\in\sol^\epsilon_{2,p}$ be the solution to \eqref{main-PDE} with $u(T)=0$, given by \cref{thm:solution-YPDE}. Then $\nabla u\in\sol^\epsilon_{2,p}$, the functions $u$, $\nabla u$ and $\nabla^2u$ are bounded and continuous on $[0,T]\times\Rd$, and $u$ satisfies \eqref{eq:u-ac} with
    \begin{align*}
        v:=f+\rho\,\bar\gamma'-\frac12a^{ij}\partial^2_{ij}u-(b+g\gamma')\cdot\nabla u,
    \end{align*}
    which is bounded and measurable, with $x\mapsto v(r,x)$ uniformly continuous, uniformly in $r$.
\end{lemma}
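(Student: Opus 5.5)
The plan is to differentiate the equation in space. With $\gamma,\bar\gamma\in C^1$, \cref{thm:solution-YPDE} shows that $u$ solves a classical parabolic equation. It reads $\partial_tu+\frac12a^{ij}\partial^2_{ij}u+(b+g\gamma')\cdot\nabla u=f+\rho\bar\gamma'$ with $u(T)=0$, and the right-hand side lies in $\mathbb D^\epsilon$ (indeed in $\LL^\infty_p$ plus a smooth term). I would show that $w_k:=\partial_ku$ is an $\epsilon$-solution of the same type of equation. Formally,
\begin{align*}
 d_tw_k+\Big(\frac12a^{ij}\partial^2_{ij}w_k+b\cdot\nabla w_k\Big)dt+(g\cdot\nabla w_k)d\gamma_t=\Big(\partial_kf-\frac12\partial_ka^{ij}\partial^2_{ij}u-\partial_kb\cdot\nabla u\Big)dt+\partial_k\rho\, d\bar\gamma_t-(\partial_kg\cdot\nabla u)d\gamma_t,
\end{align*}
with $w_k(T)=0$.

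The main obstacle is that $\partial_k\rho$ and $\partial_kg$ are only in $L^\infty B^{-\epsilon}$ (resp.\ $L_p$), not in $\sol^\epsilon_{1,p}$. So the terms $\partial_k\rho\,\bar\gamma'$ and $(\partial_kg\cdot\nabla u)\gamma'$ cannot be treated as Young terms; since $\gamma,\bar\gamma\in C^1$, I would put them into the $dt$ forcing instead. Then $\partial_k\rho\,\bar\gamma'\in L^\infty B^{-\epsilon}_{p,\infty}\subset\mathbb D^\epsilon$. Also $\partial_kg\cdot\nabla u\in\LL^\infty_p$, because $\partial_kg\in\LL^\infty_p$ and $\nabla u\in L_\infty$ by \eqref{eq:S1e-emb}. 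The remaining forcing terms are handled as follows:
\begin{itemize}
\item $\partial_ka^{ij}\partial^2_{ij}u\in L^\infty B^{-\epsilon}$ by \cref{lem.prodS}~(i), since $\partial_ka$ is bounded Lipschitz, hence $C^{\varrho}$ for $\varrho>\epsilon$;
\item $\partial_kb\cdot\nabla u\in\LL^\infty_p$ because $\nabla b\in L_\infty$;
\item $\partial_kf$ is smooth.
\end{itemize}
Hence the forcing $h_k$ lies in $\mathbb D^\epsilon$, and \cref{thm:solution-YPDE} gives a unique $\epsilon$-solution $\tilde w_k\in\sol^\epsilon_{2,p}$ with zero terminal value.

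To identify $\tilde w_k=\partial_ku$, I would use difference quotients rather than formal differentiation. Let $D^h_ku:=(u(\cdot+he_k)-u)/h$. It satisfies the equation with coefficients shifted by $he_k$, with remainders involving $D^h_ka$, $D^h_kb$ and $D^h_kg$. The a priori estimate \cref{Thm:est-roughPDE}, together with the drift and forcing bounds above, gives $\sup_h\|D^h_ku\|_{\sol^\epsilon_{2,p}}<\infty$. A weak-$*$ limit in $L^\infty B^{2-\epsilon}$ must equal $\partial_ku$, so $\partial_ku\in\sol^\epsilon_{2,p}$. Distributionally, $\partial_ku$ satisfies the displayed equation, and by the uniqueness in \cref{thm:solution-YPDE} it equals $\tilde w_k$. (Alternatively, one can differentiate \eqref{id.intsolution-PDE} directly in $B^{-2-\epsilon}$, obtain the equation for $\partial_ku$ in a lower space, and use uniqueness there; I would try this first, since only regularity of $\partial_ku$ is needed, and that comes from the differenced estimate.)

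Continuity and boundedness then follow from \cref{Thm:est-roughPDE} applied to $u$ and to $\partial_ku$. This gives $u,\nabla u\in C^\theta([0,T];C_b)$, and each is bounded in $L^\infty B^{2-\epsilon}\hookrightarrow C^{1}_b$. In particular $\nabla^2u=\nabla(\nabla u)$ is bounded, with $\nabla^2u_t\in B^{1-\epsilon}\hookrightarrow C^{1-\epsilon-d/p}$ uniformly in $t$. Moreover $\nabla u\in C^{1/2}([0,T];B^{1-\epsilon})$, so $\nabla^2u\in C^{1/2}([0,T];B^{-\epsilon})$. Interpolating with the uniform $B^{1-\epsilon}$ bound gives time continuity of $\nabla^2u$ in $B^{\lambda-\epsilon}\hookrightarrow C_b$, hence joint continuity.

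Finally, by Step 1 of \cref{thm:solution-YPDE}, $\partial_tu\in\mathbb D^\epsilon$, and by the equation $\partial_tu=v$ with $v$ as stated. Each term of $v$ is bounded and uniformly Hölder in $x$, uniformly in $r$:
\begin{itemize}
\item $f$ is smooth;
\item $\rho_r\in B^{1-\epsilon}\hookrightarrow C^{1-\epsilon-d/p}$;
\item $a^n$ is smooth, and $\nabla^2u$ is uniformly Hölder as shown above;
\item $b$ and $\nabla b$ are bounded, so $b$ is Lipschitz, and $g\in L_{1,p}\hookrightarrow C^{1-d/p}$;
\item $\nabla u$ is $C^1_b$.
\end{itemize}
Then $u(t,x)-u(s,x)=\int_s^tv(r,x)dr$ holds in distributions. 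Both sides are continuous in $x$ (the integral by uniform continuity of $v$), so the identity holds pointwise, which is \eqref{eq:u-ac}.
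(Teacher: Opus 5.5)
Your proposal is correct and follows essentially the paper's route. You use difference quotients $D_hu$ solving \eqref{main-PDE} with the terms $\bar\gamma' D_h\rho$ and $\gamma' D_hg\cdot\nabla\tau_hu$ moved into the $dt$-forcing (using $\gamma,\bar\gamma\in C^1$), obtain a uniform bound from \cref{Thm:est-roughPDE}, and pass to the limit; then you get continuity by interpolation and the pointwise identity \eqref{eq:u-ac} from the Riemann--Stieltjes form of \eqref{id.intsolution-PDE}. The detour through $\tilde w_k$ and uniqueness is unnecessary. Your ``weak-$*$ limit'' is best made precise as the paper does: use $\Delta_jD_{\lambda e_k}u_t\to\partial_k\Delta_ju_t$ in $L_p$ for each $j,t$, which gives lower semicontinuity of both parts of the $\sol^\epsilon_{2,p}$-norm. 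Also, in the last step the coefficient is $a$, not $a^n$.
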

\begin{proof}
    \textit{Step 1: $\nabla u\in\sol^\epsilon_{2,p}$.} For $h\in\Rd\setminus\{0\}$ let $\tau_hw:=w(\cdot+h)$ and $D_hw:=(\tau_hw-w)/|h|$. Since $\gamma,\bar\gamma\in C^1$, the Young integrals in \eqref{id.intsolution-PDE} are Riemann--Stieltjes integrals. The function $\tau_hu$ solves \eqref{main-PDE} with the coefficients and data translated by $h$. Subtracting the equation for $u$, we see that $D_hu\in\sol^\epsilon_{2,p}$ solves \eqref{main-PDE} with $\rho=0$, $D_hu(T)=0$ and $f$ replaced by
        \begin{align*}
        F_h&:=D_hf+\bar\gamma'D_h\rho-\frac12(D_ha^{ij})\partial^2_{ij}\tau_hu-(D_hb+\gamma'D_hg)\cdot\nabla\tau_hu.
    \end{align*}
    We bound $F_h$ in $\mathbb D^\epsilon$, uniformly in $h$. On each Littlewood--Paley block, $D_h$ is the Fourier multiplier $(e^{ih\cdot\xi}-1)/|h|$, which is bounded by $|\xi|$; hence $\|D_hw\|_{B^{\nu}_{p,\infty}}\le N\|w\|_{B^{\nu+1}_{p,\infty}}$, and $\|D_h\rho\|_{L^\infty B^{-\epsilon}_{p,\infty}}\le N\|\rho\|_{\sol^\epsilon_{1,p}}$. Also $\|D_hf\|_{\LL^\infty_p}\le\|\nabla f\|_{\LL^\infty_p}$. Since $\|D_ha(t)\|_{C^1}\le\|\nabla a(t)\|_{C^1}$, \cref{lem.prodS} (i) with $\nu=-\epsilon$ and $\varrho=1$ gives $\|(D_ha)\partial^2\tau_hu\|_{L^\infty B^{-\epsilon}_{p,\infty}}\lesssim\|u\|_{\sol^\epsilon_{2,p}}$. As in \eqref{est:bnablau}, $\|(D_hb)\cdot\nabla\tau_hu\|_{\LL^\infty_p}\lesssim\|\nabla b\|_{\LL^\infty_q}\|u\|_{\sol^\epsilon_{2,p}}$. Finally $\|D_hg(t)\|_{L_p}\le\|\nabla g(t)\|_{L_p}$ and $\nabla u$ is bounded by \eqref{eq:S1e-emb}, so $\|(D_hg)\cdot\nabla\tau_hu\|_{\LL^\infty_p}\lesssim\|g\|_{\sol_{1,p}}\|u\|_{\sol^\epsilon_{2,p}}$. By \cref{Thm:est-roughPDE}, $\sup_h\|D_hu\|_{\sol^\epsilon_{2,p}}<\infty$. Taking $h=\lambda e_k$ and letting $\lambda\to0$, we have $\Delta_jD_hu_t\to\partial_k\Delta_ju_t$ in $L_p$ for every $j$ and $t$, so that
    \begin{align*}
        \|\partial_ku\|_{\sol^\epsilon_{2,p}}\le\liminf_{\lambda\to0}\|D_{\lambda e_k}u\|_{\sol^\epsilon_{2,p}}<\infty.
    \end{align*}

    \textit{Step 2: continuity.} Every $w\in\sol^\epsilon_{1,p}$ is bounded and continuous on $[0,T]\times\Rd$, and $w(t,\cdot)$ is H\"older continuous of order $1-\epsilon-\frac dp$, uniformly in $t$. Indeed, $B^{1-\epsilon}_{p,\infty}\hookrightarrow C^{1-\epsilon-d/p}$, and, with $\lambda=\frac12(1+\frac dp+\epsilon)$ as in \cref{Thm:est-roughPDE}, $\|w_t-w_s\|_{B^{\lambda-\epsilon}_{p,\infty}}\le\|w_t-w_s\|^{1-\lambda}_{B^{-\epsilon}_{p,\infty}}\|w_t-w_s\|^\lambda_{B^{1-\epsilon}_{p,\infty}}\lesssim|t-s|^{\frac{1-\lambda}2}\|w\|_{\sol^\epsilon_{1,p}}$ with $B^{\lambda-\epsilon}_{p,\infty}\hookrightarrow C_b$. By Step 1, $u$, $\partial_ku$ and $\partial^2_{kl}u$ belong to $\sol^\epsilon_{1,p}$, which proves the claims about $u$, $\nabla u$, $\nabla^2u$.

    \textit{Step 3: the time derivative.} From \eqref{id.intsolution-PDE} with Riemann--Stieltjes integrals, $u_t-u_s=\int_s^tv_rdr$ in $B^{-1-\epsilon}_{p,\infty}$. Each term of $v$ is bounded and belongs to $\LL^\infty_p$: $f$ is smooth with compact support, $\rho$ and $\nabla u$ are bounded by \eqref{eq:S1e-emb}, $\nabla^2u$ is bounded by Step 2, and $b$, $g$ are bounded. Hence the integral is also a Bochner integral in $L_p$, which coincides almost everywhere with the pointwise integral $\int_s^tv(r,x)dr$. Both $u_t-u_s$ and $x\mapsto\int_s^tv(r,x)dr$ are continuous, so \eqref{eq:u-ac} holds everywhere. Finally, $x\mapsto v(r,x)$ is uniformly continuous, uniformly in $r$: $\rho(r,\cdot)$, $\nabla^2u(r,\cdot)$ and $\nabla u(r,\cdot)$ are uniformly H\"older by Step 2, $g(r,\cdot)$ is uniformly H\"older since $L_{1,p}\hookrightarrow C^{1-d/p}$, $a(r,\cdot)$ and $b(r,\cdot)$ are Lipschitz, uniformly in $r$, and all factors are bounded.
\end{proof}

\begin{lemma}[Krylov estimates]\label{lem:Krylov}
 There is $\theta_1\in(\frac12,\beta_0)$, depending only on $p,d,\alpha,\beta$, and for every $q'\in[q_*,\infty]$ a constant $C$, depending on $q'$, $\alpha, K, p,T,d,\beta$, $\Vert b\Vert_{\LL_{q}^{\infty}}, \Vert g\Vert_{\sol_{1,p}}$ and $\sup_n\Vert \gamma^n\Vert_{C^\beta}$, such that for every $0\le s\le t\le T$, every $q'\in[q_*,\infty]$ and every $\Upsilon\in C^\infty([0,T])$,
 \begin{align}
\label{est:Kry-b} \sup_n\sup_{x\in\mR^d} \mE\Big(\int_{s}^{t}|f(r,X_r^n(x))|dr\Big|\cF_s\Big)&\le C |t-s|^{\theta_1}    \Vert f\Vert_{\LL_{p}^{q'}},\quad \forall f\in\LL_p^{q'},
\\\label{est:Kry-g}
\sup_{n}\sup_{x\in\mR^d} \mE\Big(\big|\int_{s}^{t}\rho(r,X_r^n(x))d\Upsilon_r\big|\Big|\cF_s\Big)&\le C |t-s|^{\theta_1} \Vert \Upsilon\Vert_{C^{\beta_0}} \Vert \rho\Vert_{\sol^0_{1,p}},\quad \forall \rho\in\sol^0_{1,p}.
 \end{align}
In particular, for every $m\geq1$ there is a constant $C_m$ such that for all such $f$ and $\rho$,
  \begin{align}\label{est:Krylov}\begin{split}
\sup_n\sup_{x\in\mR^d}& \Big(   \Big\| \int_{s}^{t}|f(r,X_r^n(x))|dr\Big\|_{L_m(\Omega)}+\Big\| \int_{s}^{t}\rho(r,X_r^n(x))d\Upsilon_r\Big\|_{L_m(\Omega)}\Big)
\\&\qquad\leq C_m|t-s|^{\theta_1}\big(\Vert f\Vert_{\LL_{p}^{q'}}+\|\Upsilon\|_{C^{\beta_0}}\Vert \rho\Vert_{\sol^0_{1,p}}\big).
  \end{split}\end{align}
\end{lemma}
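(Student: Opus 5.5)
We prove \eqref{est:Kry-b} and \eqref{est:Kry-g} by representing the conditional expectations through the Kolmogorov equation of the mollified SDE \eqref{eq:SDE-m} and applying It\^o's formula (\cref{lem:Ito}) together with the Young PDE estimates of \cref{sec:PDE}. By density and monotone convergence it suffices to take $f\in C^\infty_c([0,T]\times\Rd)$ nonnegative in \eqref{est:Kry-b}; for \eqref{est:Kry-g} we first treat $\rho\in\sol^0_{1,p}$ directly, since \cref{lem:reg-un} allows general $\rho$. Fix $n$, $t\le T$, and use the coefficients $a^n,b^n,g$ and the path $\gamma^n$, which satisfy the hypotheses of \cref{lem:reg-un}.

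For \eqref{est:Kry-g}, let $u$ be the $0$-solution on $[0,t]$ of \eqref{main-PDE} with $\gamma=\gamma^n$, $f=0$, $\rho d\bar\gamma$ replaced by $-\rho\,d\Upsilon$ (with $\bar\gamma=\Upsilon$), and $u(t)=0$. By \cref{lem:reg-un} and \cref{lem:Ito}, applied with $\beta_r=b^n+g(\gamma^n)'$ and $\varsigma=\sigma^n$ evaluated along $X^n$, we obtain
\[
u(t,X^n_t)-u(s,X^n_s)=\int_s^t\rho(r,X^n_r)\Upsilon'_rdr+\text{martingale increment}.
\]
Hence $\mE(\int_s^t\rho(r,X^n_r)d\Upsilon_r|\cF_s)=-u(s,X^n_s)$. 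The key quantitative input is the bound $\sup_x|u(s,x)|=\sup_x|u(s,x)-u(t,x)|\le\|u\|_{C^\theta([0,t];C_b)}|t-s|^\theta$ from \cref{Thm:est-roughPDE}, applied with $\beta_0$ in place of $\beta$ and $\epsilon=0$. This gives $\lesssim|t-s|^\theta\|\Upsilon\|_{C^{\beta_0}}\|\rho\|_{\sol^0_{1,p}}$, with constants uniform in $n$ because $\sup_n\|\gamma^n\|_{C^{\beta_0}}<\infty$ and $a^n,b^n$ satisfy \cref{ass:main-PDE} uniformly. Here $\theta=\theta(\beta_0,\epsilon=0)\in(\frac12,\beta_0)$.

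This controls only the conditional mean, not the conditional mean of the absolute value. To handle the absolute value, I would apply the identity on every subinterval and use the conditional version for $u$ solving the PDE on $[s',t']$ with zero terminal value. Then $M_{s'}:=\int_s^{s'}\rho\,d\Upsilon+u^{(t)}(s',X^n_{s'})$ is a martingale on $[s,t]$, so $|\int_s^t\rho d\Upsilon|\le|M_t-M_s|+2\sup|u^{(t)}|$. The quadratic variation of $M$ is $\int|\sigma^{n\top}\nabla u|^2dr$, bounded by $\sup\|\nabla u\|_\infty^2|t-s|$. The resulting BDG term is only of order $|t-s|^{1/2}$, which is insufficient. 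I would therefore improve the gradient bound with a short-time argument: the estimate \eqref{est:apr-add-small}-type scaling shows $\sup_r\|u_r\|_{B^{1+\kappa}}\lesssim|t-s|^{\theta'}$ for some $\theta'>0$, by interpolating between the $C^{\beta_1}B^{0}$ bound and the $\sol^0_{2,p}$ bound, since $u$ vanishes at $t$. This yields $\|\nabla u\|_\infty\lesssim|t-s|^{\theta'}$ and a total exponent $\frac12+\theta'>\frac12$. Choosing $\theta_1$ as the minimum of the exponents, this is the step I expect to be the main obstacle.

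For \eqref{est:Kry-b}, we take $f\ge0$, so no absolute value issue arises. We let $u$ solve \eqref{main-PDE} on $[0,t]$ with data $-f\,dt$ and $u(t)=0$, where $\epsilon>0$ is chosen so that $q_\epsilon\le q_*\le q'$ and hence $\LL^{q'}_p\hookrightarrow\mathbb D^\epsilon$ with norm $\lesssim\|f\|_{\LL^{q'}_p}$. It\^o's formula gives $\mE(\int_s^tf(r,X^n_r)dr|\cF_s)=-u(s,X^n_s)\le\|u\|_{C^\theta C_b}|t-s|^{\theta}$, and $\theta(\beta_0,\epsilon)>\frac12$ by \cref{Thm:est-roughPDE}. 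Finally, \eqref{est:Krylov} follows from the conditional bounds by the standard Khasminskii/Garsia iteration. For the $dr$-term we expand $(\int_s^t|f|)^m$ into iterated integrals and condition successively. For the Young term we use the martingale decomposition above, with BDG in $L_m$ applied to $M$ plus the uniform bound on $u$.
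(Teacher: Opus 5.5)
Your proposal is essentially the paper's proof. You solve the Young Kolmogorov equation with mollified coefficients and zero terminal value at $t$, apply \cref{lem:Ito} via \cref{lem:reg-un}, and bound $|u(s,\cdot)|=|u(s,\cdot)-u(t,\cdot)|$ by the $C^{\theta}([0,t];C_b)$ estimate of \cref{Thm:est-roughPDE} with $\beta_0$. For the Young term you use $\nabla u(t)=0$ plus interpolation to get $\|\nabla u_r\|_{L_\infty}\lesssim(t-r)^{\theta'}$, so the martingale part is of order $(t-s)^{\frac12+\theta'}$.

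The step you expected to be the main obstacle is correct as you sketched it. The paper does the same, interpolating $\nabla v^n$ between $B^0_{p,\infty}$ and $B^1_{p,\infty}$ to get the exponent $1-\frac{\lambda_1}2$. Your other deviations are harmless:
\begin{itemize}
\item You fix one $\epsilon\in[\frac{\epsilon_0}2,\epsilon_0)$ and embed $\LL^{q'}_p\hookrightarrow\LL^{q_\epsilon}_p\subset\mathbb D^\epsilon$, instead of taking $\epsilon=2/q'$.
\item You upgrade to $L_m$ by Khasminskii's iteration for the drift, and by BDG applied to the pathwise identity $\int_s^t\rho\,d\Upsilon=u(s,X^n_s)+N_t-N_s$ for the Young term (with $N$ the stochastic integral of $\nabla u\cdot\sigma^n$). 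This replaces the John--Nirenberg inequality and is slightly more elementary.
\item Your displayed It\^o identity has the wrong sign for the data $-\rho\,d\Upsilon$. With the correct sign, your $M$ is indeed a martingale, so this changes nothing.
\end{itemize}

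The one step not justified as written is the reduction of \eqref{est:Kry-b} to smooth $f$ ``by density and monotone convergence''. The map $f\mapsto\int_s^tf(r,X^n_r)dr$ is not well defined on $\LL^{q'}_p$-equivalence classes until you know the occupation measure $\mu(B)=\mE[\1_A\int_s^t\1_B(r,X^n_r)dr]$, $A\in\cF_s$, does not charge Lebesgue-null sets. Also, a general measurable $f\ge0$ cannot be approximated monotonically from below by smooth functions. You need the intermediate steps:
\begin{enumerate}
\item Pass to continuous compactly supported $f$ via uniformly convergent mollifications, using Minkowski for the norms.
\item Pass to $\1_U$ for open $U$ by monotone convergence. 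Use $q'=q_*<\infty$ here, so that $\|\1_U\|_{\LL^{q_*}_p}$ is small when $|U|$ is.
\item Conclude that $\mu$ is absolutely continuous with respect to Lebesgue measure.
\item Only then treat bounded measurable compactly supported $f$: mollifications converge a.e., hence $\mu$-a.e., and $\|f_j\|_{\LL^{q'}_p}\le\|f\|_{\LL^{q'}_p}$, so Fatou applies.
\item Finish with monotone convergence.
\end{enumerate}
This is exactly the paragraph the paper devotes to the extension. Without it, the reduction to $C^\infty_c$ is incomplete.
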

\begin{proof}
For $\epsilon\in[0,\frac{\epsilon_0}2]$ let $\theta(\epsilon)$ be the exponent of \cref{Thm:est-roughPDE} applied with $\beta_0$ in place of $\beta$. Since $\lambda=\frac12(1+\frac dp+\epsilon)\le\frac12(1+\frac dp+\frac{\epsilon_0}2)<1$ and $\beta_1=\min\{\beta_0,1-\frac\epsilon2\}\ge\min\{\beta_0,1-\frac{\epsilon_0}4\}>\frac12$, we have $\inf_{\epsilon\le\epsilon_0/2}\theta(\epsilon)>\frac12$. Let $\lambda_1:=\frac12(1+\frac dp)$ and
\begin{align}\label{def:theta}
    \theta_1:=\min\Big\{\inf_{\epsilon\in[0,\epsilon_0/2]}\theta(\epsilon),\ 1-\frac{\lambda_1}2\Big\}\in\Big(\frac12,\beta_0\Big).
\end{align}
Fix $n$, $x$, and $\bar t\in(0,T]$; it suffices to consider $t=\bar t$. All constants below are uniform in $n$, $x$ and $\bar t$, since the constants of \cref{sec:PDE} depend on the path in the Young term only through $\sup_n\|\gamma^n\|_{C^{\beta_0}}$, on the drift only through $\|b^n\|_{\LL^\infty_q}\le\|b\|_{\LL^\infty_q}$, and on the diffusion coefficient only through $\alpha$ and the constant $4K^{3/2}$, which are the same for all $a^n$. The process $X^n$ is of the form considered in \cref{lem:Ito}, with $\beta_r=b^n(r,X^n_r)+g(r,X^n_r)(\gamma^n)'_r$ and $\varsigma_r=\sigma^n(r,X^n_r)$, which are bounded, and $\frac12\mathrm{tr}(\varsigma_r\varsigma_r^\top\nabla^2w)=\frac12a^{n,ij}\partial^2_{ij}w(r,X^n_r)$.

We first show \eqref{est:Kry-b} for $f\in C^\infty_c([0,T]\times\Rd)$, in the form $|\mE(\int_s^{\bar t}f(r,X^n_r)dr|\cF_s)|\le C|\bar t-s|^{\theta_1}\|f\|_{\LL^{q'}_p}$. Let $\epsilon:=\frac2{q'}\in[0,\frac{\epsilon_0}2]$, so that $f\in\LL^{q'}_p=\LL^{q_\epsilon}_p\subset\mathbb D^\epsilon$. By \cref{thm:solution-YPDE} (on $[0,\bar t]$, with $a^n$, $b^n$, $\gamma^n$ in place of $a$, $b$, $\gamma$), there is a solution $u^n\in\sol^\epsilon_{2,p}([0,\bar t])$ to
\begin{align}\label{eq:PDE-no-drift}
    d_tu^n+\Big(\frac{1}{2}a^{n,ij}\partial^2_{ij} u^n+b^n\cdot \nabla u^n\Big)dt+g\cdot \nabla u^n d\gamma_t^n=fdt , \quad u^n(\bar t,\cdot)=0,
\end{align}
By \cref{lem:reg-un} (with $\rho=0$), $u^n$ satisfies the assumptions of \cref{lem:Ito} with $v=f-\frac12a^{n,ij}\partial^2_{ij}u^n-(b^n+g(\gamma^n)')\cdot\nabla u^n$. Hence, by \eqref{eq:Ito-classical},
\begin{align*}
    0=u^n(\bar t,X_{\bar t}^n)= u^n(s,X_s^n)+\int_s^{\bar t}f(r,X_r^n)dr+\int_s^{\bar t}(\nabla u^n\cdot\sigma^n)(r,X_r^n)dW_r,
\end{align*}
where the stochastic integral is a martingale since $\nabla u^n$ is bounded. Hence
\begin{align*}
   \Big|\mE\Big(\int_{s}^{\bar t}f(r,X_r^n)dr\Big|\cF_s\Big)\Big|&=|u^n(s,X_s^n)|=|u^n({\bar t},X_s^n)- u^n(s,X_s^n)|
   \\&\le\|u^n\|_{C^{\theta(\epsilon)}([0,\bar t];C_b)}|\bar t-s|^{\theta(\epsilon)}\lesssim\|f\|_{\LL_{p}^{q'}}  |{\bar t}-s|^{\theta_1},
\end{align*}
by \cref{Thm:est-roughPDE} and $\|f\|_{\mathbb D^\epsilon}\le\|f\|_{\LL^{q'}_p}$.

\textit{Extension to measurable $f$.} Fix $s\le\bar t$ and $A\in\cF_s$, and let $\mu(B):=\mE[\1_A\int_s^{\bar t}\1_B(r,X^n_r)dr]$ for Borel sets $B\subset[0,T]\times\Rd$. We have shown that
\begin{align}\label{est:mu-f}
    \int f\,d\mu\le C|\bar t-s|^{\theta_1}\|f\|_{\LL^{q'}_p}\mP(A)
\end{align}
for nonnegative $f\in C^\infty_c$. If $f\ge0$ is continuous with compact support, its space-time mollifications $f_j$ are smooth, converge to $f$ uniformly and satisfy $\|f_j\|_{\LL^{q'}_p}\le\|f\|_{\LL^{q'}_p}$ by Minkowski's inequality, so \eqref{est:mu-f} holds for $f$. For an open set $U\subset\R\times B_R$, $\1_U$ is the increasing limit of such functions with norms at most $\|\1_U\|_{\LL^{q'}_p}$, so $\mu(U)\le C\|\1_U\|_{\LL^{q_*}_p}\mP(A)$, using \eqref{est:mu-f} with $q'=q_*<\infty$. A Lebesgue null set contained in $[0,T]\times B_{R}$ is contained in open sets $U\subset\R\times B_{R+1}$ of arbitrarily small Lebesgue measure, for which $\|\1_U\|_{\LL^{q_*}_p}$ is arbitrarily small; hence $\mu$ is absolutely continuous with respect to Lebesgue measure. Now let $f\ge0$ be bounded, measurable and compactly supported. Its space-time mollifications $f_j$ converge to $f$ Lebesgue-almost everywhere, hence $\mu$-almost everywhere, and $\|f_j\|_{\LL^{q'}_p}\le\|f\|_{\LL^{q'}_p}$, so Fatou's lemma gives \eqref{est:mu-f} for $f$. By monotone convergence, \eqref{est:mu-f} holds for every measurable $f\ge0$. Since $A\in\cF_s$ is arbitrary, this is \eqref{est:Kry-b}.

We show \eqref{est:Kry-g}. Let $v^{n}\in \sol^0_{2,p}([0,\bar t])$ be the solution, provided by \cref{thm:solution-YPDE} with $\epsilon=0$ and with $a^n$, $b^n$, $\gamma^n$ in place of $a$, $b$, $\gamma$, to
\begin{align}\label{eq:PDE-gamman}
          d_tv^{n}+\Big(\frac{1}{2}a^{n,ij}\partial^2_{ij} v^{n}+b^n\cdot \nabla v^{n}\Big)dt+g\cdot \nabla v^{n} d\gamma_t^n=\rho\, d \Upsilon_t,
 \quad v^{n}({\bar t},\cdot)=0.
\end{align}
By \cref{lem:reg-un} (with $f=0$ and $\bar\gamma=\Upsilon$), $v^n$ satisfies the assumptions of \cref{lem:Ito}. Put $$\caa_t:=\int_0^t \rho(r,X_r^n)d\Upsilon_r \quad \text{  and   } N_t:=\int_0^t(\nabla v^{n}\cdot\sigma^n)(r,X_r^n)dW_r.$$ By \eqref{eq:Ito-classical},
\begin{align*}
    \caa_{\bar t}-\caa_s
   = -v^{n}(s,X_s^n)-(N_{\bar t}-N_s)
   = v^{n}(\bar t,X_s^n)-v^{n}(s,X_s^n)-(N_{\bar t}-N_s).
\end{align*}
By \cref{Thm:est-roughPDE}, $\|v^n\|_{\sol^0_{2,p}}+\|v^n\|_{C^{\theta(0)}([0,\bar t];C_b)}\lesssim\|\Upsilon\|_{C^{\beta_0}}\|\rho\|_{\sol^0_{1,p}}$, so the first difference is bounded by a constant times $\|\Upsilon\|_{C^{\beta_0}}\|\rho\|_{\sol^0_{1,p}}|\bar t-s|^{\theta(0)}$. For the martingale part, we use that $\nabla v^n(\bar t)=0$. Interpolating between $B^0_{p,\infty}$ and $B^1_{p,\infty}$ and using $\nabla v^n\in\sol^0_{1,p}$, we get for $r\le\bar t$
\begin{align*}
    \|\nabla v^n_r\|_{B^{\lambda_1}_{p,\infty}}=\|\nabla v^n_r-\nabla v^n_{\bar t}\|_{B^{\lambda_1}_{p,\infty}}\lesssim(\bar t-r)^{\frac{1-\lambda_1}2}\|v^n\|_{\sol^0_{2,p}},
\end{align*}
and $B^{\lambda_1}_{p,\infty}\hookrightarrow L_\infty$ since $\lambda_1>\frac dp$. Hence, by the conditional It\^o isometry,
\begin{align*}
    \mE\big(|N_{\bar t}-N_s|\big|\cF_s\big)\le\Big(\mE\Big(\int_s^{\bar t}|(\sigma^n)^\top\nabla v^n|^2(r,X^n_r)dr\Big|\cF_s\Big)\Big)^{\frac12}\lesssim(\bar t-s)^{1-\frac{\lambda_1}2}\|\Upsilon\|_{C^{\beta_0}}\|\rho\|_{\sol^0_{1,p}}.
\end{align*}
Combining the two bounds gives \eqref{est:Kry-g}.

Finally, \eqref{est:Kry-b} and \eqref{est:Kry-g} state that the continuous adapted processes $t\mapsto\int_0^t|f(r,X^n_r)|dr$ and $t\mapsto\int_0^t\rho(r,X^n_r)d\Upsilon_r$ satisfy a conditional $L_1$-bound on their increments; the quantitative John--Nirenberg inequality \cite[Theorem 3.4]{le-John} upgrades it to \eqref{est:Krylov}.
\end{proof}

\begin{proposition}
    [Tightness] \label{prop:tightness}
    Let $\mathbb{C}:=C([0,T];\mR^d)$ be endowed with the uniform topology and its Borel $\sigma$-field $\mathcal{B}(\mathbb{C})$, and let $\bP^{n,x}$ denote the law of $X^n(x)$ on $\mC$. Then $(\bP^{n,x})_{n\geq 1} $ is tight. More precisely, for every $m\ge1$ there is a constant $C$ such that
    \begin{align}\label{est:Xn-mom}
        \sup_n\big\|X^n_t-X^n_s\big\|_{L_m(\Omega)}\le C(t-s)^{\frac12},\qquad 0\le s\le t\le T.
    \end{align}
\end{proposition}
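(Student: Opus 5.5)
We will prove \eqref{est:Xn-mom} directly and then deduce tightness from Kolmogorov's criterion, since all $X^n_0=x$ coincide. Write
\begin{align*}
X^n_t-X^n_s=\int_s^tb^n(r,X^n_r)dr+\int_s^tg(r,X^n_r)d\gamma^n_r+\int_s^t\sigma^n(r,X^n_r)dW_r
\end{align*}
and estimate the three terms separately in $L_m(\Omega)$, uniformly in $n$.

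\emph{Martingale term.} Since $\sigma^n$ is bounded by a constant independent of $n$, the Burkholder--Davis--Gundy inequality gives
\begin{align*}
\Big\|\int_s^t\sigma^n(r,X^n_r)dW_r\Big\|_{L_m}\lesssim_m (t-s)^{\frac12}.
\end{align*}

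\emph{Drift term.} We apply \eqref{est:Krylov} with $f=b^n$ and $q'=\infty$. This requires a bound on $\|b^n\|_{\LL^\infty_p}$. By $(H^b)$ we only know $b\in\LL^\infty_q$ with $q\ge p$, so $b^n$ need not lie in $L_p$ globally when $q>p$. We handle this by localization. Split $b^n=b^n\1_{|b^n|\le1}+b^n\1_{|b^n|>1}$. The first part is bounded, so its contribution is at most $t-s$. For the second part, Chebyshev gives $|b^n|\1_{|b^n|>1}\le|b^n|^{q/p}$ pointwise, whence $\|b^n\1_{|b^n|>1}\|_{L_p}^p\le\|b^n\|_{L_q}^q$, uniformly in $n$ and $t$. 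Applying \eqref{est:Krylov} to this part then yields a bound $\lesssim (t-s)^{\theta_1}\le T^{\theta_1-\frac12}(t-s)^{\frac12}$. (If $q=\infty$, the drift is simply bounded and only the first case occurs.)

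\emph{Young term.} This is the main obstacle, because $\gamma^n$ is only uniformly bounded in $C^\beta$. We apply \eqref{est:Krylov} with $\rho=g$ and $\Upsilon=\gamma^n$. This is admissible because $g\in\sol_{1,p}\hookrightarrow\sol^0_{1,p}$ and $\sup_n\|\gamma^n\|_{C^{\beta_0}}<\infty$. Since the constant $C_m$ in \eqref{est:Krylov} is uniform in $\Upsilon\in C^\infty$ and in $n$, we obtain
\begin{align*}
\Big\|\int_s^tg(r,X^n_r)d\gamma^n_r\Big\|_{L_m}\le C_m|t-s|^{\theta_1}\sup_k\|\gamma^k\|_{C^{\beta_0}}\|g\|_{\sol_{1,p}}\lesssim (t-s)^{\frac12},
\end{align*}
using $\theta_1>\frac12$.

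Summing the three bounds gives \eqref{est:Xn-mom}. Choosing $m>2$ so that $\frac m2>1$, Kolmogorov's tightness criterion (for instance \cite[Chapter 2]{ka}), together with the fixed initial value $x$, shows that $(\bP^{n,x})_n$ is tight on $\mathbb C$.
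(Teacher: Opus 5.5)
Your proof is correct and follows the paper's argument. Both truncate the drift at level $1$ and apply the Krylov bound \eqref{est:Krylov} to the large part, apply the Krylov--Young bound with $\rho=g$ and $\Upsilon=\gamma^n$, use BDG for the martingale term, and conclude with Kolmogorov's criterion. The only difference is cosmetic: you truncate $b^n$ itself and use $\|b^n\|_{\LL^\infty_q}\le\|b\|_{\LL^\infty_q}$, whereas the paper truncates $b$ and dominates $|b^n|\le|b_1|\ast\varphi_n+1$; your separate treatment of $q=\infty$ is a small but welcome extra bit of care.
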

\begin{proof}
    We decompose $b=b_1+b_2$ with $b_1:=b\1_{|b|>1}$ and $b_2:=b\1_{|b|\le1}$. Since $q\ge p$, $\|b_1(t)\|_{L_p}^p\le\|b(t)\|_{L_q}^q$, so $b_1\in\LL^\infty_p$, and $|b_2|\le1$; hence $|b^n|\le|b_1|\ast\varphi_n+1$ with $\||b_1|\ast\varphi_n\|_{\LL^\infty_p}\le\|b_1\|_{\LL^\infty_p}$. By \eqref{est:Krylov} (with $f=|b_1|\ast\varphi_n$, $\rho=g\in\sol_{1,p}\subset\sol^0_{1,p}$ and $\Upsilon=\gamma^n$), for every $m\ge2$ and $s\le t\le T$,
    \begin{align*}
         \Big\|\int_s^t b^n(r,X^n_r)dr\Big\|_{L_m(\Omega)} &+
        \Big\|\int_s^t g(r,X^n_r)d\gamma^n_r\Big\|_{L_m(\Omega)}
        \\&\lesssim\big( \|b_1\|_{\LL^\infty_p}+1+\sup_n\|\gamma^n\|_{C^{\beta_0}}\|g\|_{\sol_{1,p}} \big) (t-s)^{\theta_1}.
    \end{align*}
    The stochastic integral $\int \sigma^n (r,X^n_r)dW_r$ is estimated by the BDG inequality, since $\sigma^n$ is bounded uniformly in $n$. This gives \eqref{est:Xn-mom}, and tightness follows from Kolmogorov's criterion.
\end{proof}

\section{Martingale problem and weak solutions for rough SDEs}
\label{sec:mar-weak}


Recall that $\mathbb{C}=C([0,T];\mR^d)$. Let $\mathcal{P}(\mathbb{C})$ denote the set of probability measures on $(\mathbb{C},\mathcal{B}(\mathbb{C}))$, let $(w_t)$ be the coordinate process, $w_t(\omega)=\omega_t$, and let $(\mathcal{B}_t)_{t\in[0,T]}$ be its natural filtration. For $\mathbf{P}\in\mathcal{P}(\mathbb{C})$, $\mathbf{E}^{\mathbf{P}}$ denotes the expectation with respect to $\mathbf{P}$. We write
\begin{align}\label{def:L}
   Lf(t,x):=\partial_tf (t,x)+\frac{1}{2}a^{ij}(t,x)\partial^2_{ij}f (t,x)+b(t,x)\cdot\nabla f(t,x).
\end{align}

\subsection{Definitions and the Krylov--Young property}
\begin{definition}[Young martingale solution]\label{def:mar-sol}
  Let $(s,y)\in[0,T)\times\Rd$. We say that $\bP\in\mathcal{P}(\mC)$ is a {\it{martingale solution}} to \eqref{eq:rough-SDE} starting from $y$ at time $s$ if $\bP(w_r=y \text{ for all } r\in[0,s])=1$ and the following hold:
  \begin{enumerate}
      \item For every $\rho\in\sol^0_{1,p}$ and every sequence $(\gamma^n)\subset C^\infty([0,T]) $ converging to $\gamma$ in $C^{\beta-}$, the integrals $\int_{s}^{\cdot}\rho(r,w_r)d\gamma_r^n$ converge uniformly on $[s,T]$ in $\bP$-probability. We denote the limit by $(D_t(\rho))_{t\in[s,T]}$; it does not depend on the sequence (interlace two sequences).
     \item $\int_{s}^{T}|b(r,w_r)|dr<\infty$, $\bP$-a.s.
  \item For every $f\in  C_c^\infty([0,T]\times\mR^d)$, the process
  \begin{align}\label{eq:martingale-problem}
    M_t^f:=f(t,w_t)-f(s,y)-\int_{s}^{t} Lf(r,w_r)dr
   -D_t(g\cdot\nabla f),\qquad t\in[s,T],
  \end{align}
  is a continuous $(\mathcal{B}_t)$-martingale under $\bP$.
  \end{enumerate}
   The set of all such $\bP$ is denoted by $\mathcal{M}_{L,g,\gamma}(s,y)$, and we write $\mathcal{M}_{L,g,\gamma}(x):=\mathcal{M}_{L,g,\gamma}(0,x)$.
\end{definition}
Note that $g\cdot\nabla f\in\sol_{1,p}\subset\sol^0_{1,p}$ for $f\in C^\infty_c([0,T]\times\Rd)$, so $D(g\cdot\nabla f)$ in \eqref{eq:martingale-problem} is well defined.

\begin{definition}[Weak solution]\label{def:weak-sol}
Let $(\tilde\Omega,\tilde{\mathcal{F}},(\tilde{\mathcal{F}}_t)_{t\in[0,T]},\tilde{\mathcal{P}})$ be a filtered probability space and $(\tilde X,\tilde W)$ a pair of continuous $(\tilde{\mathcal{F}}_t)$-adapted processes. We call $(\tilde\Omega,\tilde{\mathcal{F}},(\tilde{\mathcal{F}}_t),\tilde{\mathcal{P}};\tilde X,\tilde W)$ a {\it weak solution} of  \eqref{eq:rough-SDE} starting from $x$ if
\begin{enumerate}
  \item $\tilde W$ is a $d$-dimensional $(\tilde{\mathcal{F}}_t)$-Brownian motion;
  \item for every sequence $(\gamma^n)\subset C^\infty([0,T]) $ converging to $\gamma$ in $C^{\beta-}$, the integrals $\int_{0}^{\cdot}g(r,\tilde X_r)d\gamma_r^n$ converge uniformly on $[0,T]$ in $\tilde{\mathcal{P}}$-probability; we denote the limit by $G$;
\item $\int_0^T  |b(t,\tilde X_t)|dt<\infty$, $\tilde{\mathcal{P}}$-a.s.;
  \item for every $t\in[0,T]$, $\tilde{\mathcal{P}}$-a.s.,
  \begin{align*}
       \tilde X_t=x+\int_0^tb(r,\tilde X_r)dr+G_t+\int_0^t\sigma(r,\tilde X_r)d\tilde W_r.
  \end{align*}
\end{enumerate}
\end{definition}

\begin{definition}
    [Krylov--Young property]\label{lem:Y-kry} Let $(s,y)\in[0,T)\times\Rd$. We say that $\bP\in\mathcal{P}(\mC)$ satisfies the {\it {\ky}  property } (from time $s$) if there exist finite constants $\theta_{KY}>\frac12$ and $C_{KY}$ such that for all $s\le s'\le t\le T$, $f\in \LL^\infty_p$, $\rho\in\sol^0_{1,p}$ and $\Upsilon\in C^\infty([0,T])$,
 \begin{align}
\label{est:Kry-b-fin-M}
\bE^{\bP}\Big(\int_{s'}^{t}|f(r,w_r)|dr+\big|\int_{s'}^{t}\rho(r,w_r)d\Upsilon_r\big|\,\Big|\,\cB_{s'}\Big)\le C_{KY} |t-s'|^{\theta_{KY}}    \big(\Vert f\Vert_{\LL_{p}^{\infty}}+\Vert \Upsilon\Vert_{C^{\beta_0}} \Vert \rho\Vert_{\sol^0_{1,p}}\big).
 \end{align}
Likewise, a weak solution $X$ satisfies the {\it {\ky} property} if \eqref{est:Kry-b-fin-M} holds with $\bE^{\bP}(\cdot|\cB_{s'})$ and $w$ replaced by $\mE^{\tilde\cP}(\cdot|\tilde\cF_{s'})$ and $\tilde X$ (and $s=0$):
 \begin{align}
\label{est:Kry-b-fin}
\mE^{\tilde\cP}\Big(\int_{s'}^{t}|f(r,\tilde X_r)|dr+\big|\int_{s'}^{t}\rho(r,\tilde X_r)d\Upsilon_r\big|
\,\Big|\,\tilde\cF_{s'}\Big)\le C_{KY} |t-s'|^{\theta_{KY}}    \big( \Vert f\Vert_{\LL_{p}^{\infty}}+\Vert\Upsilon\Vert_{C^{\beta_0}} \Vert \rho\Vert_{\sol^0_{1,p}}\big).
 \end{align}
\end{definition}
\begin{remark}\label{rem:KY-class}
    \cref{lem:Krylov} shows that the approximating solutions $X^n$ satisfy \eqref{est:Kry-b-fin} uniformly in $n$, and \cref{thm:existence} below shows that this property passes to the limit. By the John--Nirenberg inequality, the \ky property implies that $\int_0^t b(r,w_r)dr$ and $D_t(g)$ have $\theta_{KY}$-H\"older moments (see \cref{lem:KY-cons}), which is the type of a priori regularity expected from a solution of \eqref{eq:rough-SDE}. In this sense the \ky property describes the natural class of solutions, similarly to \cite{Zhangzhao, GP, HZ}; note, however, that here the singularity is caused by the time variable through $\gamma$.
\end{remark}


\begin{lemma}\label{lem:KY-cons}
    Let $\bP\in\mathcal{P}(\mC)$ satisfy the {\ky} property from time $s$, with constants $\theta_{KY},C_{KY}$. Then:
    \begin{enumerate}[(i)]
        \item For every $m\ge1$ there is $C$ depending on $m,C_{KY},\theta_{KY}$ such that for all $s\le s'\le t\le T$, $f$, $\rho$, $\Upsilon$ as in \cref{lem:Y-kry},
  \begin{align}\label{est:Krylov-fin-M}
  \Big\| \int_{s'}^{t}|f(r,w_r)|dr\Big\|_{L_m(\bP)}+\Big\| \int_{s'}^{t}\rho(r,w_r)d\Upsilon_r\Big\|_{L_m(\bP)}\leq C|t-s'|^{\theta_{KY}}\big(\Vert f\Vert_{\LL_{p}^\infty}+\|\Upsilon\|_{C^{\beta_0}}\Vert \rho\Vert_{\sol^0_{1,p}}\big).
  \end{align}
  \item Condition (1) of \cref{def:mar-sol} holds. Moreover, the convergence holds in $L_m(\bP;C([s,T]))$ for every $m\ge1$, $\rho\mapsto D(\rho)$ is linear, and for all $s\le s'\le t\le T$, $\bP$-a.s.,
  \begin{align}\label{est:D-KY}\begin{split}
      &\bE^\bP\big(|D_t(\rho)-D_{s'}(\rho)|\,\big|\,\cB_{s'}\big)\le C_{KY}|t-s'|^{\theta_{KY}}\|\gamma\|_{C^{\beta_0}}\|\rho\|_{\sol^0_{1,p}},
      \\
      &\big\|D_t(\rho)-D_{s'}(\rho)\big\|_{L_m(\bP)}\le C|t-s'|^{\theta_{KY}}\|\gamma\|_{C^{\beta_0}}\|\rho\|_{\sol^0_{1,p}}.
  \end{split}\end{align}
  \item Condition (2) of \cref{def:mar-sol} holds, and $\bE^\bP\int_s^T|b(r,w_r)|dr<\infty$.
  \item The occupation measure $\mu(A):=\bE^\bP\int_s^T\1_A(r,w_r)dr$ on $[s,T]\times\Rd$ has a density $\mathfrak m$ with $\int_s^T\|\mathfrak m(r,\cdot)\|_{L_{p'}}dr\le C_{KY}T^{\theta_{KY}}$, $p'=\frac p{p-1}$. Consequently, if $(h_k)$ is bounded in $\LL^\infty_p$ and $\|h_k(r)\|_{L_p}\to0$ for a.e.\ $r$, then $\bE^\bP\int_s^T|h_k(r,w_r)|dr\to0$.
  \item If $(\rho_k)$ is bounded in $\sol^0_{1,p}$ and $\|\rho_k(r)\|_{L_p}\to0$ for a.e.\ $r$, then $\sup_{t}\bE^\bP|D_t(\rho_k)|\to0$.
    \end{enumerate}
    The same statements hold for a weak solution $X$ with the \ky property; in this case, in addition,
    \begin{align}
        \label{est:Lm-X-mom}
        \Big\| \sup_{r\in[s,t]}|X_r-X_s|\Big\|_{L_m(\tilde\cP)}\lesssim |t-s|^\frac{1}{2},\qquad 0\le s\le t\le T,\ m\ge1 .
    \end{align}
\end{lemma}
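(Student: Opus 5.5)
The plan is to take the five items in order, each resting on the conditional bound \eqref{est:Kry-b-fin-M} and the John--Nirenberg inequality.

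\textbf{Item (i).} The processes $A_t:=\int_{s}^t|f(r,w_r)|dr$ and $B_t:=\int_s^t\rho(r,w_r)d\Upsilon_r$ are continuous and adapted. \eqref{est:Kry-b-fin-M} bounds their conditional $L_1$ increments by $C_{KY}|t-s'|^{\theta_{KY}}(\cdots)$. For $A$, which is increasing, the classical Garsia/John--Nirenberg argument gives $L_m$ bounds. For $B$, which is not monotone, I would use the quantitative John--Nirenberg inequality \cite[Theorem 3.4]{le-John}, exactly as at the end of the proof of \cref{lem:Krylov}. That result applies to processes $Z$ with $\bE(|Z_t-Z_{s'}|\,|\,\cB_{s'})\le \omega(s',t)$; note that $|B_t-B_{s'}|$ is bounded by the left-hand side of \eqref{est:Kry-b-fin-M} on $[s',t]$.

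\textbf{Item (ii).} This is the main step. Let $(\gamma^n)$ converge to $\gamma$ in $C^{\beta-}$, so the sequence converges in $C^{\beta_0}$. Apply (i) with $\Upsilon=\gamma^n-\gamma^k$, which is smooth. This gives
\[
\Big\|\int_{s'}^t\rho(r,w_r)d(\gamma^n-\gamma^k)_r\Big\|_{L_m(\bP)}\lesssim|t-s'|^{\theta_{KY}}\|\gamma^n-\gamma^k\|_{C^{\beta_0}}\|\rho\|_{\sol^0_{1,p}} .
\]
Since $\theta_{KY}>\frac12$, hence $>0$, Kolmogorov's continuity criterion (or the Garsia--Rodemich--Rumsey inequality) with $m$ large upgrades this to a bound on $\bE\sup_t|\cdot|^m$. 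So the sequence is Cauchy in $L_m(\bP;C([s,T]))$, and the limit $D(\rho)$ exists. It is independent of the sequence by interlacing two sequences. Linearity in $\rho$ is clear from linearity of the approximants.

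The bounds \eqref{est:D-KY} follow by passing to the limit. The $L_m$ bound comes from (i) with $\Upsilon=\gamma^n$ and $\|\gamma^n\|_{C^{\beta_0}}\to\|\gamma\|_{C^{\beta_0}}$. For the conditional bound I would use conditional Fatou, or $L_1$ convergence of conditional expectations.

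\textbf{Items (iii)--(v) and the weak-solution estimate.}
\begin{itemize}
\item[(iii)] Split $b=b\1_{|b|>1}+b\1_{|b|\le1}$ as in \cref{prop:tightness}. The first part lies in $\LL^\infty_p$, so (i) bounds its contribution; the second part is bounded by $1$.
\item[(iv)] From \eqref{est:Kry-b-fin-M} with $s'=s$ and $t=T$, $|\int f\,d\mu|\le C_{KY}T^{\theta_{KY}}\|f\|_{\LL^\infty_p}$. So $\mu$ defines a bounded functional on $\LL^1_p$, extending from $\LL^\infty_p$ by applying the estimate on subintervals and summing. By duality $(\LL^1_p)^*=\LL^\infty_{p'}$, so $\mu$ has a density $\mathfrak m$ with $\int_s^T\|\mathfrak m(r)\|_{L_{p'}}dr\lesssim C_{KY}T^{\theta_{KY}}$. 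Here I would argue carefully: the bound must be applied to $|f|$ with $\int\|f(r)\|_{L_p}dr$, obtained from partitions in time. Then $\bE\int|h_k|(r,w_r)dr\le\int\|h_k(r)\|_{L_p}\|\mathfrak m(r)\|_{L_{p'}}dr$, which tends to $0$ by dominated convergence.
\item[(v)] Approximate $D_t(\rho_k)$ by $\int\rho_k\,d\gamma^n$ uniformly in $k$, using \eqref{est:D-KY} with the difference path. Then, for fixed $n$, $|\int\rho_k\,d\gamma^n|\le\|(\gamma^n)'\|_\infty\int|\rho_k(r,w_r)|dr$. Its expectation tends to $0$ by (iv), since $\rho_k$ is bounded in $\LL^\infty_p$ by \eqref{eq:S1e-emb}. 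Choosing $n$ first and then $k$ gives the claim.
\item[\eqref{est:Lm-X-mom}] Write $X_t-X_s$ as the drift integral, plus $G_t-G_s=D$-type increments with $\rho=g$, plus the stochastic integral. The first two are controlled by (i)--(iii) via Kolmogorov with exponent $\theta_{KY}>\frac12$, which also handles the supremum. The stochastic integral is handled by BDG.
\end{itemize}
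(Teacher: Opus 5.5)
Your arguments for (i), (ii), (iii), (v) and for \eqref{est:Lm-X-mom} follow the paper's proof. These are: John--Nirenberg \cite[Theorem 3.4]{le-John} for (i); (i) with $\Upsilon=\gamma^n-\gamma^k$ plus Kolmogorov for convergence in $L_m(\bP;C([s,T]))$, then Fatou for \eqref{est:D-KY}; the splitting of $b$; approximation of $D(\rho_k)$ by $\int\rho_k\,d\gamma^n$ combined with (iv); and Kolmogorov plus BDG for the weak solution. The gap is in (iv).

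Your claim that $\mu$ is a bounded functional on $\LL^1_p$ does not follow, and it is false in general. Summing the Krylov--Young bound over a partition only gives $C_{KY}\sum_i|t_{i+1}-t_i|^{\theta_{KY}}\sup_{[t_i,t_{i+1}]}\|f(r)\|_{L_p}$. Since $\theta_{KY}$ is only known to exceed $\frac12$, and is $<1$ in the case of interest ($\theta_1<\beta_0<1$), the factors $|t_{i+1}-t_i|^{\theta_{KY}-1}$ are unbounded. So you get no bound by $\int\|f(r)\|_{L_p}dr$.

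Concretely, take Brownian motion started from $y$ at time $s$; it has the Krylov--Young property by \cref{lem:Krylov} with $b=g=0$, $\sigma=I$. Its occupation density is $p_{r-s}(\cdot-y)$, whose $L_{p'}$-norm is of order $(r-s)^{-d/(2p)}$. This is integrable in $r$ but unbounded, so the density is not in $\LL^\infty_{p'}=(\LL^1_p)^*$. Note also that your duality, even if it applied, would bound $\sup_r\|\mathfrak m(r)\|_{L_{p'}}$, not the quantity $\int\|\mathfrak m(r)\|_{L_{p'}}dr$ that you state.

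The right direction is that $L^1([s,T];L_{p'})$ sits isometrically inside the dual of $\LL^\infty_p$. This works because the test function can be normalised separately at each time, so no summation over subintervals is needed.

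First, apply $\mu(|f|)\le C_{KY}T^{\theta_{KY}}\|f\|_{\LL^\infty_p}$ to $f=\1_N$ with $N$ Lebesgue-null. Almost every time-section of $N$ is null, so $\|\1_N\|_{\LL^\infty_p}=0$. Hence $\mu\ll dr\,dx$, and a density $\mathfrak m\ge0$ exists.

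Next, put $\mathfrak m_M:=(\mathfrak m\wedge M)\1_{B_M}$ and test with $h(r,\cdot):=\mathfrak m_M(r,\cdot)^{p'-1}/\|\mathfrak m_M(r,\cdot)\|_{L_{p'}}^{p'-1}$, with $h(r,\cdot)=0$ when $\mathfrak m_M(r,\cdot)=0$. Since $(p'-1)p=p'$, you have $\|h(r)\|_{L_p}\le1$ for every $r$, while $\mu(h)\ge\int_s^T\|\mathfrak m_M(r)\|_{L_{p'}}dr$. Therefore $\int_s^T\|\mathfrak m_M(r)\|_{L_{p'}}dr\le C_{KY}T^{\theta_{KY}}$, and monotone convergence as $M\to\infty$ gives the claimed bound.

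With this in place, the rest of your (iv) (H\"older in $x$, dominated convergence in $r$) goes through unchanged, and so does your (v), which depends on it.
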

\begin{proof}
 (i) The conditional bound \eqref{est:Kry-b-fin-M} for the continuous adapted processes $t\mapsto\int_s^t|f(r,w_r)|dr$ and $t\mapsto\int_s^t\rho(r,w_r)d\Upsilon_r$ yields \eqref{est:Krylov-fin-M} by the John--Nirenberg inequality \cite[Theorem 3.4]{le-John}.

 (ii) Let $(\gamma^n)$ converge to $\gamma$ in $C^{\beta-}$, so that $\gamma^n\to\gamma$ in $C^{\beta_0}$. For $Z_t:=\int_s^t\rho(r,w_r)d(\gamma^n-\gamma^k)_r$, (i) gives $$\|Z_t-Z_{s'}\|_{L_m(\bP)}\le C|t-s'|^{\theta_{KY}}\|\gamma^n-\gamma^k\|_{C^{\beta_0}}\|\rho\|_{\sol^0_{1,p}},\quad \forall m.$$
 By Kolmogorov's criterion with $m\theta_{KY}>1$, $$\|\sup_{t\in[s,T]}|Z_t|\|_{L_m(\bP)}\lesssim\|\gamma^n-\gamma^k\|_{C^{\beta_0}}\|\rho\|_{\sol^0_{1,p}}\to0  \text{ as } n,k\to\infty.$$  Hence $\int_s^\cdot\rho(r,w_r)d\gamma^n_r$ converges in $L_m(\bP;C([s,T]))$, linearity of $D$ is clear, and \eqref{est:D-KY} follows from \eqref{est:Kry-b-fin-M}, (i) and Fatou's lemma (for the conditional bound, along an a.s.\ convergent subsequence), using $\|\gamma^n\|_{C^{\beta_0}}\to\|\gamma\|_{C^{\beta_0}}$.

 (iii) Decompose $b=b_1+b_2$ as in the proof of \cref{prop:tightness} and apply (i) with $f=b_1$; $b_2$ is bounded.

 (iv) By \eqref{est:Kry-b-fin-M}, $\mu(|h|)\le C_{KY}T^{\theta_{KY}}\|h\|_{\LL^\infty_p}$ for all measurable $h$. Taking $h=\1_N$ for a Lebesgue null set $N$ shows that $\mu$ is absolutely continuous; let $\mathfrak m$ be its density. Testing with $h(r,\cdot):=\mathfrak m_M(r,\cdot)^{p'-1}/\|\mathfrak m_M(r,\cdot)\|_{L_{p'}}^{p'-1}$, where $\mathfrak m_M:=(\mathfrak m\wedge M)\1_{B_M}$, gives $\int\|\mathfrak m_M(r)\|_{L_{p'}}dr\le C_{KY}T^{\theta_{KY}}$, and letting $M\to\infty$ yields the bound on $\mathfrak m$. Then by dominated convergence $$\bE^\bP\int|h_k(r,w_r)|dr\le\int\|h_k(r)\|_{L_p}\|\mathfrak m(r)\|_{L_{p'}}dr\to0.$$ 

 (v) Let $(\gamma^m)$ converge to $\gamma$ in $C^{\beta-}$. For each $m$, by \eqref{est:Kry-b-fin-M} with $\Upsilon=\gamma-\gamma^m$ (in the limit sense of (ii)) and by (iv),
 \begin{align*}
          \bE^\bP|D_t(\rho_k)|&\le\bE^\bP\Big|D_t(\rho_k)-\int_s^t\rho_k(r,w_r)d\gamma^m_r\Big|+\sup|(\gamma^m)'|\,\bE^\bP\int_s^T|\rho_k(r,w_r)|dr
     \\&\le C\|\gamma-\gamma^m\|_{C^{\beta_0}}\sup_k\|\rho_k\|_{\sol^0_{1,p}}+o_{k\to\infty}(1).
 \end{align*}
 Letting first $k\to\infty$ and then $m\to\infty$ gives the claim.

 For a weak solution $X$ with the {\ky} property, (i)--(iii) are proved in the same way. Since $G$ is the limit of $\int_0^\cdot g(r,X_r)d\gamma^n_r$, (ii) with $\rho=g$ gives $$\|G_t-G_s\|_{L_m}\lesssim|t-s|^{\theta_{KY}},\quad  \text{and also }  \|\int_s^t|b(r,X_r)|dr\|_{L_m}\lesssim|t-s|^{\theta_{KY}}$$ by (i) and (iii). Since $\theta_{KY}>\frac12$, the Kolmogrov continuity criteria and the BDG inequality for $\int\sigma(r,X_r)dW_r$ give \eqref{est:Lm-X-mom}.
\end{proof}

\begin{lemma}
    [Young--It\^o formula]\label{lem:Young-Ito} Let $X$ be a weak solution of \eqref{eq:rough-SDE} with the {\ky} property, and let $f\in C^\infty([0,T]\times\mathbb{R}^d)$ such that $f$ and all its derivatives have polynomial growth in $x$, uniformly in $t$. Then $ \tilde\cP$-a.s., for all $t\in[0,T]$,
    \begin{align}
        \label{eq:Ito}
        \begin{split}f(t,X_t)=&f(0,X_0)+\int_0^t(\partial_tf+\nabla f\cdot b)(r,X_r)dr+\int_0^t\nabla f(r,X_r)dG_r\\&+\int_0^t(\nabla f\cdot \sigma)(r,X_r)dW_r+\frac{1}{2}\int_0^t\big(a^{ij}\partial_{ij}^2f\big)(r,X_r)dr,\end{split}
    \end{align}
    where $\int_0^t\nabla f(r,X_r)dG_r$ is a Young integral. Moreover, $\tilde\cP$-a.s.\ for all $t$,
    \begin{align}\label{eq:ID}
        \int_0^t\nabla f(r,X_r)dG_r=\lim_{n\to\infty}\int_0^t(g\cdot\nabla f)(r,X_r)d\gamma^n_r,
    \end{align}
    the limit being taken in probability, for any sequence $(\gamma^n)\subset C^\infty([0,T])$ converging to $\gamma$ in $C^{\beta-}$.
\end{lemma}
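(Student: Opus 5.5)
We will prove the Young--It\^o formula by approximating $\gamma$ with smooth paths. For $X$ itself we use its semimartingale-plus-Young decomposition, and we remove the polynomial growth of $f$ by localization. Write $Y_t:=X_t-G_t=x+\int_0^tb(r,X_r)dr+\int_0^t\sigma(r,X_r)dW_r$. This is a continuous semimartingale, since $\int_0^T|b(r,X_r)|dr<\infty$ a.s. by \cref{lem:KY-cons} (iii). By \cref{lem:KY-cons} (ii) with $\rho=g$, $G$ has $\theta_{KY}$-H\"older moments. Kolmogorov's criterion then shows that a.s.\ $G\in C^{\theta'}([0,T])$ for some $\theta'\in(\frac12,\theta_{KY})$. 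Hence $X=Y+G$ is a ``Young--It\^o process'': a semimartingale plus a path of finite $1/\theta'$-variation with $\theta'>\frac12$.

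\textbf{Step 1 (It\^o formula for $X=Y+G$).} We prove \eqref{eq:Ito} directly by a Taylor expansion along partitions, as in the proof of \cref{lem:Ito}. Write
\[
f(t_{k+1},X_{t_{k+1}})-f(t_k,X_{t_k})=\big[f(t_{k+1},X_{t_{k+1}})-f(t_k,X_{t_{k+1}})\big]+\big[f(t_k,X_{t_{k+1}})-f(t_k,X_{t_k})\big].
\]
Expand the second bracket to second order in $\Delta X=\Delta Y+\Delta G$. The terms are handled as follows.
\begin{itemize}
\item The linear part $\nabla f(t_k,X_{t_k})\Delta Y$ produces the $dr$- and $dW$-integrals.
\item The linear part $\nabla f(t_k,X_{t_k})\Delta G$ produces Riemann sums of the Young integral $\int\nabla f(r,X_r)dG_r$.
\item The quadratic part $\frac12\nabla^2f\,(\Delta Y)^{\otimes2}$ gives $\frac12\int a^{ij}\partial^2_{ij}f\,dr$.
\item The cross terms $\Delta Y\Delta G$ and the term $(\Delta G)^2$ vanish in the limit, because $G$ has finite $1/\theta'$-variation with $2\theta'>1$ and $Y$ has finite quadratic variation.
\end{itemize}
The Young integral exists pathwise: $r\mapsto\nabla f(r,X_r)$ is $\theta''$-H\"older for every $\theta''<\frac12$ (using \eqref{est:Lm-X-mom}), and $\theta''+\theta'>1$ for $\theta''$ close to $\frac12$. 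To apply Taylor's theorem with uniform constants we localize with stopping times $\tau_R:=\inf\{t:|X_t|\ge R\}$, so that $f$ and its derivatives are bounded on the relevant set. Alternatively, one can take a stochastic-integral-free route: apply the classical It\^o formula to $f(t,Y_t+G^n_t)$ with smooth $G^n\to G$ and pass to the limit using Young stability.

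\textbf{Step 2 (identification \eqref{eq:ID}).} Set $h:=\nabla f$ and $Z^n_t:=\int_0^tg(r,X_r)d\gamma^n_r$, so that $Z^n\to G$ uniformly in probability. We must show
\[
\int_0^t(g\cdot h)(r,X_r)d\gamma^n_r=\int_0^th(r,X_r)dZ^n_r\longrightarrow\int_0^th(r,X_r)dG_r .
\]
Uniform convergence alone is not enough; we need $Z^n\to G$ in a H\"older norm of exponent $\theta'>\frac12$. This follows from the KY bound applied to $Z^n-Z^k$: by \cref{lem:KY-cons} (ii), $\|(Z^n-Z^k)_t-(Z^n-Z^k)_{s'}\|_{L_m}\lesssim|t-s'|^{\theta_{KY}}\|\gamma^n-\gamma^k\|_{C^{\beta_0}}$. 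Garsia--Rodemich--Rumsey/Kolmogorov then gives $\mE\|Z^n-G\|_{C^{\theta'}}^m\to0$. By the continuity of the Young integral in the integrator in $C^{\theta'}$, with $h(\cdot,X_\cdot)\in C^{\theta''}$ and $\theta'+\theta''>1$, and after localization on $\{\sup|X|\le R\}$, we obtain convergence in probability, uniformly in $t$.

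\textbf{Main obstacle.} The delicate point is the H\"older-in-time control used in both steps. In Step 2 it is the $C^{\theta'}$ convergence of $Z^n$, which requires $\theta_{KY}>\frac12$ together with the moment bounds. In Step 1 it is the vanishing of the cross and $(\Delta G)^2$ terms, which requires the pathwise $\frac12^-$ regularity of $X$. We expect $\theta_{KY}>\frac12$ and \eqref{est:Lm-X-mom} to be exactly what makes both work.
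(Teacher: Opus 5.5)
Your proposal is correct. For \eqref{eq:ID} it matches the paper in substance, but for the It\^o formula it takes a different route.

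\textbf{It\^o formula.} You split $X=Y+G$ with $Y$ a continuous semimartingale and argue pathwise or in probability:
\begin{itemize}
\item the $Y$-terms are handled by the classical limit theorems for Riemann sums, namely convergence to the stochastic integral and $\sum_kH_{t_k}\Delta Y^i\Delta Y^j\to\int H\,d\langle Y^i,Y^j\rangle$ with $\langle Y^i,Y^j\rangle=\int a^{ij}(r,X_r)dr$;
\item the cross and $(\Delta G)^2$ terms vanish by Cauchy--Schwarz, since $\sum_k|\Delta Y|^2=O_P(1)$ and $\frac1{\theta'}<2$;
\item the Taylor remainder is at most $\omega(\max_k|\Delta X|)\sum_k|\Delta X|^2$, where $\omega$ is a modulus of continuity of $\nabla^2f$ on the compact range of the path.
\end{itemize}
The paper instead works in $L_1/L_2$. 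It bounds the remainder in expectation using the KY moment bounds $\|V_{v,u}\|_{L_4}\lesssim(u-v)^{\theta_{KY}}$ and $\|\delta X_{v,u}\|_{L_k}\lesssim(u-v)^{\frac12}$ together with the polynomial growth of $f$. It treats the quadratic term through the orthogonality of the martingale differences $Z_{v,u}$.

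From the KY property your route uses only the a.s.\ H\"older regularity of $G$, and of $X$ for the existence of the Young integral. It therefore needs neither moments of $\sup_t|X_t|$ nor the polynomial growth of $f$, and your stopping times $\tau_R$ are superfluous. The price is quoting standard semimartingale convergence results. The paper's argument is self-contained and quantitative, but it leans on the moment bounds.

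\textbf{Identification \eqref{eq:ID}.} Both arguments rest on H\"older convergence of $D^n=\int_0^\cdot g(r,X_r)d\gamma^n_r$ to $G$, plus continuity of the Young integral. You obtain this by applying Kolmogorov's criterion to $D^n-D^k$, so that $(D^n)$ is Cauchy in $L_m(\tilde\cP;C^{\theta'})$. The paper combines a uniform $L_m(C^{\theta_{KY}-\delta})$ bound with uniform convergence in probability and interpolation. Either works.

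\textbf{Minor points.} The cross terms need only $\theta'>\frac12$ and finite quadratic variation of $Y$, not H\"older regularity of $X$ as your ``main obstacle'' paragraph suggests. Also, finish by passing from fixed $t$ to all $t$ using continuity of both sides in $t$.
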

\begin{proof}
  Fix $\delta>0$ so small that $\frac12-\delta+\theta_{KY}-2\delta>1$. By \cref{lem:KY-cons} and Kolmogorov's criterion, $$X\in C^{\frac12-\delta}([0,T])\qquad \text{ and } \qquad G\in C^{\theta_{KY}-\delta}([0,T])\qquad a.s.$$  with moments of all orders; moreover by \eqref{est:Krylov-fin-M} with $\Upsilon=\gamma^n$, $$D^n:=\int_0^\cdot g(r,X_r)d\gamma^n_r \text{ is bounded in } L_m(\tilde\cP;C^{\theta_{KY}-\delta}) \text{ uniformly in } n.$$  Hence $\Phi_r:=\nabla f(r,X_r)$ is $(\frac12-\delta)$-H\"older, which implies the Young integral $\int\Phi dG$ is well defined.  Moreover since $D^n\to G$ uniformly in probability, interpolation gives $D^n\to G$ in $C^{\theta_{KY}-2\delta}$ in probability. By the continuity of the Young integral, $$\int_0^t\Phi dD^n\to\int_0^t\Phi dG  \text{ uniformly in probability.}$$ Since $D^n$ is absolutely continuous with derivative $g(r,X_r)(\gamma^n)'_r$, we have $$\int_0^t\Phi dD^n=\int_0^t(g\cdot\nabla f)(r,X_r)d\gamma^n_r$$ which proves \eqref{eq:ID}.

  To prove \eqref{eq:Ito}, let $\Pi$ be a partition of $[0,t]$ and, for $[v,u]\in\Pi$, write $$\delta X_{v,u}=X_u-X_v=V_{v,u}+I_{v,u}$$ with $$V_{v,u}:=\int_v^ub(r,X_r)dr+G_u-G_v \text{ and } I_{v,u}:=\int_v^u\sigma(r,X_r)dW_r.$$ By \cref{lem:KY-cons}, $$\|V_{v,u}\|_{L_4}\lesssim(u-v)^{\theta_{KY}}, \text{  and  }  \|I_{v,u}\|_{L_4}+\|\sup_{r\in[v,u]}|X_r-X_v|\|_{L_4}\lesssim(u-v)^{\frac12}.$$ Since $f$ and its derivatives have polynomial growth and $X$ has moments of all orders by \cref{lem:KY-cons}, Taylor's formula gives 
    \begin{align*}
                f(u,X_u)-f(v,X_v)
        &= \partial_tf(v,X_v)(u-v)+\nabla f(v,X_v)\cdot\delta X_{v,u}\\&\quad+\frac12\nabla^2 f(v,X_v)[\delta X_{v,u},\delta X_{v,u}]+R_{v,u},
    \end{align*}
    with $|R_{v,u}|\lesssim(1+|X_v|+|\delta X_{v,u}|)^m\big((u-v)^2+(u-v)|\delta X_{v,u}|+|\delta X_{v,u}|^3\big)$, where $m$ is the largest polynomial growth order among the finitely many derivatives $\partial_t^2f,\partial_t\nabla f,\nabla^3f$ and $\partial_tf,\nabla^2f$ involved. Since $\sup_t|X_t|$ has moments of all orders by \eqref{est:Lm-X-mom} and $\|\delta X_{v,u}\|_{L_k}\lesssim(u-v)^{\frac12}$ for all $k$ (by the bounds on $I_{v,u}$, $V_{v,u}$ above), H\"older's inequality gives $\mE|R_{v,u}|\lesssim(u-v)^{\frac32}$, hence $\sum_{\Pi}\mE|R_{v,u}|\lesssim|\Pi|^{\frac12}\to0$. We now let $|\Pi|\to0$ in each sum. By definition of Riemann integral, a.s., 
    \begin{align}
        \label{est:Ito1}
        \lim_{|\Pi|\to0}\sum_{[v,u]\in\Pi} \partial_tf(v,X_v)(u-v)=\int_0^t\partial_tf(r,X_r)dr.
    \end{align}
    Observe that 
    \begin{align*}
        \sum_\Pi\nabla f(v,X_v)\cdot\int_v^ub(r,X_r)dr-&\int_0^t(\nabla f\cdot b)(r,X_r)dr\\&=\sum_\Pi\int_v^u(\nabla f(v,X_v)-\nabla f(r,X_r))\cdot b(r,X_r)dr,
    \end{align*}
  so we have
    \begin{align}
      \Big\|\sum_\Pi\int_v^u(\nabla f(v,X_v)-\nabla f(r,X_r))\cdot b(r,X_r)dr\Big\|_{L_1}\lesssim \sum_\Pi(u-v)^{\frac12+\theta_{KY}}\to0,  \label{est:Ito2}
    \end{align}
  where the bound follows from H\"older's inequality, since $|\nabla f(v,X_v)-\nabla f(r,X_r)|\lesssim(1+\sup_s|X_s|)^m\big((r-v)+|X_r-X_v|\big)$, $\|\sup_{r\in[v,u]}|X_r-X_v|\|_{L_k}\lesssim(u-v)^{\frac12}$ and $\|\int_v^u|b(r,X_r)|dr\|_{L_k}\lesssim(u-v)^{\theta_{KY}}$ for all $k$ (\cref{lem:KY-cons}).
   Further by the definition of the Young integral we also have
   \begin{align}
       \label{est:Ito3}\sum_\Pi\nabla f(v,X_v)\cdot(G_u-G_v)\to\int_0^t\nabla f(r,X_r)dG_r \quad\qquad a.s.
   \end{align}
   By It\^o's isometry, the sum converges in $L_2$ to the stochastic integral provided $\mE\int_0^t|\nabla f(r,X_r)-\nabla f(\pi(r),X_{\pi(r)})|^2dr\to0$, where $\pi(r):=v$ for $r\in[v,u)$. This holds by dominated convergence on $\Omega\times[0,t]$: the integrand tends to $0$ pointwise by continuity of $r\mapsto\nabla f(r,X_r)$, and it is dominated by $C(1+\sup_s|X_s|)^{2m}\in L_1$ since $X$ has moments of all orders. Hence
   \begin{align}
       \label{est:Ito4}\sum_\Pi\nabla f(v,X_v)\cdot I_{v,u}\to\int_0^t(\nabla f\cdot\sigma)(r,X_r)dW_r \quad \quad \text{ in } L_2.
   \end{align}
In the last term $\frac12\nabla^2f(v,X_v)[\delta X_{v,u},\delta X_{v,u}]$, by decomposing
     \begin{align*}
        [\delta X_{v,u},\delta X_{v,u}]=(V_{v,u})^{\otimes2}&+\big(\int_v^u\sigma(r,X_r)dW_r\big)^{\otimes2}+V_{v,u}\otimes\int_v^u\sigma(r,X_r)dW_r\\&+\int_v^u\sigma(r,X_r)dW_r\otimes V_{v,u},
    \end{align*}
we can see the terms containing $V_{v,u}$ have $L_1$-norm $\lesssim(u-v)^{\theta_{KY}+\frac12}$, so their sum tends to $0$ in $L_1$; for the remaining term, $$Z_{v,u}:=\frac12\partial^2_{ij}f(v,X_v)(I^i_{v,u}I^j_{v,u}-\int_v^ua^{ij}(r,X_r)dr)$$ satisfies $\mE(Z_{v,u}|\tilde\cF_v)=0$ and $\mE|Z_{v,u}|^2\lesssim(u-v)^2$, so $$\mE|\sum_\Pi Z_{v,u}|^2=\sum_\Pi\mE|Z_{v,u}|^2\to0$$ while \begin{align}
    \label{est:Ito6}
    \sum_\Pi\frac12\partial^2_{ij}f(v,X_v)\int_v^ua^{ij}(r,X_r)dr\to\frac12\int_0^t(a^{ij}\partial^2_{ij}f)(r,X_r)dr \qquad a.s.
\end{align}
 
     Since $\sum_\Pi(f(u,X_u)-f(v,X_v))=f(t,X_t)-f(0,X_0)$, putting \eqref{est:Ito1}-\eqref{est:Ito6} together proves \eqref{eq:Ito} for fixed $t$, and hence for all $t$ by continuity (along a sequence of partitions with $|\Pi|\to0$ on which the $L_1$/$L_2$ convergences hold a.s.).
\end{proof}

\begin{corollary}\label{prop:w-m}
    Let $(\tilde\Omega,\tilde{\mathcal{F}},(\tilde{\mathcal{F}}_t),\tilde{\mathcal{P}};X,W)$ be a weak solution to \eqref{eq:rough-SDE} starting from $x$ with the \ky property. Then $\bP:=\tilde\cP\circ X^{-1}\in\cM_{L,g,\gamma}(x)$, and $\bP$ satisfies the \ky property with the same constants.
\end{corollary}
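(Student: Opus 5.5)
We must show three things about $\bP:=\tilde\cP\circ X^{-1}$: it satisfies the \ky property from time $0$, conditions (1)--(2) of \cref{def:mar-sol} hold, and $M^f$ is a $(\cB_t)$-martingale for $f\in C^\infty_c$. The plan is to transfer everything from $X$ to the canonical space. Two facts drive the argument: $\cB_{s'}$-measurable functionals of $w$ pull back to $\sigma(X_r:r\le s')\subset\tilde\cF_{s'}$, and the integrals appearing in \eqref{est:Kry-b-fin-M} are measurable functionals of the path.

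\emph{Step 1: the \ky property.} Fix $f,\rho$ and a smooth $\Upsilon$. The random variable $Y:=\int_{s'}^t|f(r,w_r)|dr+|\int_{s'}^t\rho(r,w_r)d\Upsilon_r|$ is a Borel functional of $w$; here $\int\rho\,d\Upsilon=\int\rho\,\Upsilon'dr$ is a Lebesgue integral. Take $F$ bounded, nonnegative and $\cB_{s'}$-measurable. Then
\begin{align*}
\bE^\bP[FY]=\mE^{\tilde\cP}\big[F(X)\,\mE^{\tilde\cP}(Y(X)|\tilde\cF_{s'})\big]\le C_{KY}|t-s'|^{\theta_{KY}}(\cdots)\,\bE^\bP F,
\end{align*}
because $F(X)$ is $\tilde\cF_{s'}$-measurable and \eqref{est:Kry-b-fin} applies. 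This gives \eqref{est:Kry-b-fin-M} with the same constants. By \cref{lem:KY-cons} (ii),(iii), conditions (1) and (2) of \cref{def:mar-sol} then hold for $\bP$. Moreover $D_t(\rho)$ is the limit in probability of $\int_0^t\rho(r,w_r)d\gamma^n_r$. Along a subsequence converging a.s., it therefore equals a Borel functional $\Psi_\rho(w)$, with $\Psi_\rho(X)$ equal to the corresponding limit for $X$. In particular, $D(g\cdot\nabla f)(X)$ is the limit in \eqref{eq:ID}.

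\emph{Step 2: the martingale property.} Apply \cref{lem:Young-Ito} to $f\in C^\infty_c$, which trivially has polynomially growing derivatives. By \eqref{eq:Ito} and \eqref{eq:ID},
\begin{align*}
f(t,X_t)-f(0,x)-\int_0^tLf(r,X_r)dr-\lim_n\int_0^t(g\cdot\nabla f)(r,X_r)d\gamma^n_r=\int_0^t(\nabla f\cdot\sigma)(r,X_r)dW_r.
\end{align*}
By Step 1, the left-hand side is $M^f_t$ evaluated at $X$, a.s. The right-hand side is a true $(\tilde\cF_t)$-martingale, since $\nabla f$ and $\sigma$ are bounded. Continuity of $M^f$ comes from continuity of $D$, which holds by the uniform convergence in \cref{lem:KY-cons} (ii). For $s<t$ and bounded $\cB_s$-measurable $F$, we get $\bE^\bP[F(M^f_t-M^f_s)]=\mE[F(X)(N_t-N_s)]=0$, using $\sigma(X_r:r\le s)\subset\tilde\cF_s$. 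Hence $M^f$ is a $(\cB_t)$-martingale under $\bP$. Finally, $\bP(w_0=x)=1$ holds trivially.

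\emph{Main obstacle.} The only delicate point is the identification of $D(g\cdot\nabla f)\circ X$ with the Young integral $\int\nabla f\,dG$. Both are defined as limits in probability, under different measures and on different spaces. I would handle this as in Step 1: fix one sequence $\gamma^n$ and a subsequence that converges a.s. simultaneously under $\bP$ and (via pushforward) under $\tilde\cP$. Since convergence in probability of $\Phi_n(w)$ under $\bP$ is the same as convergence of $\Phi_n(X)$ under $\tilde\cP$, the two limits coincide a.s., and \eqref{eq:ID} completes the identification.
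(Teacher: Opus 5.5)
Your proposal is correct and follows essentially the same route as the paper. The \ky property transfers to $\bP$ by the tower property using $\sigma(X_r:r\le s')\subset\tilde\cF_{s'}$, conditions (1)--(2) come from \cref{lem:KY-cons}, and \cref{lem:Young-Ito} identifies $M^f\circ X$ with $\int_0^\cdot(\nabla f\cdot\sigma)(r,X_r)\,dW_r$. Your explicit choice of a Borel version of $D(\rho)$ along an a.s.\ convergent subsequence simply spells out what the paper states in one clause: under $\bP$, $D(\rho)$ is the image of the corresponding limit for $X$.
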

\begin{proof}
    Since $\sigma(X_r:r\le s')\subset\tilde\cF_{s'}$, \eqref{est:Kry-b-fin} implies \eqref{est:Kry-b-fin-M} for $\bP$ by the tower property. Conditions (1) and (2) of \cref{def:mar-sol} then follow from \cref{lem:KY-cons}, and under $\bP$ the process $D(\rho)$ is the image of the corresponding limit for $X$. For $f\in C^\infty_c([0,T]\times\Rd)$, \cref{lem:Young-Ito} shows that $M^f$ evaluated along $X$ equals $\int_0^\cdot(\nabla f\cdot\sigma)(r,X_r)dW_r$, which is a martingale with respect to $(\tilde\cF_t)$, and hence with respect to the natural filtration of $X$. Thus $M^f$ is a $(\cB_t)$-martingale under $\bP$.
\end{proof}

We also need the martingale property for test functions of Sobolev type. Here and below, when \eqref{est:Kry-b-fin-M} or \cref{lem:KY-cons} is applied to a function defined on $[0,t_0]$, we extend it to $[0,T]$ by $v(t):=v(t_0)$ for $t>t_0$; this does not increase its norms.
\begin{lemma}\label{lem:MP-Sobolev}
    Let $\bP\in\cM_{L,g,\gamma}(s,y)$ satisfy the \ky property and let $t_0\in(s,T]$. Let $u\in\sol^0_{2,p}([0,t_0])$ with $u(t_0)\in C^\infty_c(\Rd)$, and assume that $$\varrho:=\partial_tu+\frac12a^{ij}\partial^2_{ij}u  \in \LL^\infty_p([0,t_0]),$$ i.e. \ $u_t-u_r+\int_r^t\frac12a^{ij}\partial^2_{ij}u_vdv=\int_r^t\varrho_vdv$ for $r\le t\le t_0$. Then
    \begin{align*}
        M^u_t:=u(t,w_t)-u(s,y)-\int_s^t\big(\varrho+b\cdot\nabla u\big)(r,w_r)dr-D_t(g\cdot\nabla u),\qquad t\in[s,t_0],
    \end{align*}
    is a $(\cB_t)$-martingale under $\bP$. 
\end{lemma}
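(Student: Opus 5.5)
The plan is to approximate $u$ by smooth compactly supported test functions $f_k$, apply the martingale property \eqref{eq:martingale-problem} to each $f_k$, and pass to the limit using the \ky property through \cref{lem:KY-cons} (iv)--(v).

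\textbf{Construction of $f_k$.} First extend $u$ to $[0,T]$ by $u(t):=u(t_0)$ for $t>t_0$. Then mollify in space and time, $u_k:=(u\ast_x\varphi_k)\ast_t\eta_k$, with time mollification one-sided or after constant extension near the endpoints. Finally multiply by a cutoff $\chi(\cdot/k)$ in space to get $f_k\in C^\infty_c([0,T]\times\Rd)$.

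\textbf{Properties of $f_k$.} Since $u\in\sol^0_{2,p}$ and $\varrho\in\LL^\infty_p$, we get three things.
\begin{enumerate}[(a)]
\item $f_k\to u$ pointwise uniformly on compacts. Both are continuous, bounded by \eqref{eq:S1e-emb} and the $C^\theta C_b$ bound of \cref{Thm:est-roughPDE}-type interpolation.
\item $\nabla f_k$ is bounded in $\sol^0_{1,p}$ and $\nabla f_k(r)\to\nabla u(r)$ in $L_p$ for every $r$. Hence, by \cref{lem.prodS}, $g\cdot\nabla f_k$ is bounded in $\sol^0_{1,p}$ with $\|g\cdot\nabla(f_k-u)(r)\|_{L_p}\to0$, using $g$ bounded.
\item $\partial_tf_k+\frac12a^{ij}\partial^2_{ij}f_k=\varrho_k+\frac12(a^{ij}\partial^2_{ij}f_k-(a^{ij}\partial^2_{ij}u)_k)$.
\end{enumerate}
The last bracket in (c) is a commutator. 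Here $\partial^2u\in B^0_{p,\infty}$ only, so this term is the main obstacle. I would control it with \eqref{est:commutator} (with $\nu=0$), which gives an $L_p$ bound of order $k^{-\alpha}\log(1+k)\|a\|_{C^\alpha}\|u\|_{\sol^0_{2,p}}\to0$, uniformly in $r$. The time mollification also commutes with $a(t,x)$ only up to an error. For that error I would use uniform continuity of $a$ in $t$ together with the fact that $\partial^2u$ is bounded in $B^0_{p,\infty}$ (hence $\partial^2 u_k$ is bounded in $L_p$ after spatial smoothing, with the spatial mollification done first and a $k$-dependent bound absorbed). If that proves delicate, I would instead approximate $u$ by solutions $u_k$ of the PDE with smooth data via \cref{thm:solution-YPDE} and \cref{cor:stab}, reducing to time-regular functions.

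The cutoff terms are lower order: they involve $\nabla\chi/k$ times $\nabla u$ or $u$. These are bounded in $\LL^\infty_p$ and vanish in $L_p$ at each $r$.

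\textbf{Passing to the limit.} Thus $h_k:=Lf_k-(\varrho+b\cdot\nabla u)$ satisfies the hypotheses of \cref{lem:KY-cons} (iv), so $\bE^\bP\int_s^{t_0}|h_k(r,w_r)|dr\to0$. For the drift part, $b\cdot\nabla(f_k-u)$ is bounded in $\LL^\infty_p$ by \eqref{est:bnablau}-type estimates and converges to $0$ in $L_p$ a.e.\ $r$ by dominated convergence. By \cref{lem:KY-cons} (v) and linearity of $D$, $\sup_t\bE^\bP|D_t(g\cdot\nabla f_k)-D_t(g\cdot\nabla u)|\to0$. Also $f_k(t,w_t)\to u(t,w_t)$ in $L_1(\bP)$ by boundedness and dominated convergence. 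Hence $M^{f_k}_t\to M^u_t$ in $L_1(\bP)$ for each $t\in[s,t_0]$, and $L_1$-limits of martingales are martingales.

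Finally, $M^u$ is continuous: $D(\cdot)$ has a continuous version by \cref{lem:KY-cons} (ii), and $u(t,w_t)$ is continuous. So $M^u$ is a continuous $(\cB_t)$-martingale on $[s,t_0]$.
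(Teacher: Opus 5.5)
Your argument is correct and uses the same ingredients as the paper: mollification plus a spatial cutoff, the commutator bound \eqref{est:commutator} with $\nu=0$, and \cref{lem:KY-cons} (iii)--(v) to pass to the limit in $L_1(\bP)$. The difference is in how the approximation is organized.

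The paper works in two steps. It first mollifies in $x$ only, $u^{(m)}=u\ast\varphi_m$. Then $\partial_tu^{(m)}=(\varrho-\frac12a^{ij}\partial^2_{ij}u)\ast\varphi_m$ is a genuine function with values in every $B^{m'}_{p,\infty}$, and the only error term is the spatial commutator $[a^{ij},\varphi_m]\partial^2_{ij}u$. For such spatially smooth functions it then mollifies in time (after reflection) and cuts off. Now $\partial_tu^{\eta,R}\to\partial_tu$ in $L^1([0,t_0];C_b)$ and $\nabla^2u^{\eta,R}\to\nabla^2u$ uniformly. So $a$ is never commuted with the time mollifier, and no time regularity of $a$ is used.

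Your one-shot space--time mollification produces an additional term $\int\eta(h)\,(a(t)-a(t+h))\,(\varphi_k\ast\partial^2u_{t+h})\,dh$. Its $L_p$-norm is only bounded by $\omega_a(\delta)\sup_r\|\varphi_k\ast\partial^2u_r\|_{L_p}\lesssim\omega_a(\delta)\log(1+k)\|u\|_{\sol^0_{2,p}}$. Here $\delta$ is the time width and $\omega_a$ is the modulus of continuity of $a$ in $t$, available from $(H^\sigma)$(ii). Since $\partial^2u$ is only in $B^0_{p,\infty}$, the factor $\|\varphi_k\ast\partial^2u_r\|_{L_p}$ can genuinely grow with $k$, and $\omega_a$ has no rate. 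So the time width must be chosen depending on $k$, e.g.\ with $\omega_a(\delta_k)\log(1+k)\to0$, rather than literally $1/k$. Your remark about absorbing a $k$-dependent bound amounts to exactly this, and with it the argument closes.

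Two small points. First, constant extension past $t_0$ is legitimate precisely because $u(t_0)\in C^\infty_c$: the extended $\varrho$, equal to $\frac12a^{ij}\partial^2_{ij}u(t_0)$ for $t>t_0$, then stays in $\LL^\infty_p$. At $t=0$ you should use a forward one-sided kernel, since $u(0)$ is only in $B^2_{p,\infty}$. The paper's reflection avoids this issue.

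Second, for the drift term, ``convergence in $L_p$ by dominated convergence'' is not literally available when $q>p$, because $b(r,\cdot)\in L_q$ need not lie in $L_p$. It is cleaner to argue as the paper does: use dominated convergence on $\Omega\times[s,t]$ with dominating function $C|b(r,w_r)|$, which is integrable by \cref{lem:KY-cons} (iii), together with the pointwise convergence $\nabla f_k\to\nabla u$.
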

\begin{proof} Our proof distributes into following two steps. 

  {\it Step 1: spatially smooth $u$.} Assume first that, in addition, $u$ is smooth in $x$ with all derivatives in $C([0,t_0];B^m_{p,\infty})$ for every $m$, and that $\partial_tu\in L^\infty([0,t_0];B^m_{p,\infty})$ for every $m$.  
  Extend $u$ to $[-1,t_0+1]$ by reflection in time, mollify in time with a kernel of width $\eta$, multiply by a smooth cutoff in time equal to $1$ on $[0,t_0]$ and by $\chi(x/R)$ with $\chi\in C^\infty_c(\Rd)$, $\chi=1$ near $0$. 
  This gives $u^{\eta,R}\in C^\infty_c([0,T]\times\Rd)$, so $M^{u^{\eta,R}}$ is a martingale by \cref{def:mar-sol}. As $\eta\to0$ and $R\to\infty$: $u^{\eta,R}\to u$ uniformly together with first and second spatial derivatives; $\partial_tu^{\eta,R}\to\partial_tu$ in $L^1([0,t_0];C_b(\Rd))$ (because $B^m_{p,\infty}\hookrightarrow C_b$ for $m>d/p$); and $\nabla u^{\eta,R}(r)\to\nabla u(r)$ in $L_p$ for every $r$, with $\sup\|g\cdot\nabla u^{\eta,R}\|_{\sol^0_{1,p}}<\infty$. Using \cref{lem:KY-cons} (iii) for the drift term and (v) for $D(g\cdot\nabla u^{\eta,R})$, we obtain $M^{u^{\eta,R}}_t\to M^u_t$ in $L_1(\bP)$, so $M^u$ is a martingale.

    {\it Step 2: general $u$.} Let $u^{(m)}:=u\ast\varphi_m$ (mollification in $x$ only). Then $u^{(m)}$ satisfies the assumptions of Step 1 (with $\partial_tu^{(m)}=(\varrho-\frac12a^{ij}\partial^2_{ij}u)\ast\varphi_m$), and
    \begin{align*}
       \varrho^{(m)}:=\partial_tu^{(m)}+\frac12a^{ij}\partial^2_{ij}u^{(m)}=\varrho\ast\varphi_m+\frac12[a^{ij},\varphi_m]\partial^2_{ij}u .
    \end{align*}
    As $m\to\infty$: $u^{(m)}\to u$ and $\nabla u^{(m)}\to\nabla u$ uniformly, since $u$ and $\nabla u$ are bounded and uniformly H\"older continuous in $x$, uniformly in $t$; $\|\varrho\ast\varphi_m(r)-\varrho(r)\|_{L_p}\to0$ for every $r$, with $\sup_m\|\varrho\ast\varphi_m\|_{\LL^\infty_p}\le\|\varrho\|_{\LL^\infty_p}$; and $[a^{ij},\varphi_m]\partial^2_{ij}u\to0$ in $\LL^\infty_p$ by \eqref{est:commutator} with $\nu=0$. Moreover $(g\cdot\nabla u^{(m)})$ is bounded in $\sol^0_{1,p}$ by \cref{lem.prodS}, and $g\cdot\nabla(u^{(m)}-u)(r)\to0$ in $L_p$ for every $r$. By \cref{lem:KY-cons} (iii)--(v), $M^{u^{(m)}}_t\to M^u_t$ in $L_1(\bP)$ for every $t$, and $M^u$ is a martingale.
\end{proof}

\subsection{Existence}
\begin{theorem}[Existence]\label{thm:existence} Let $\theta_1$ and $C$ be the constants of \cref{lem:Krylov}.
\begin{enumerate}
    \item (Martingale solution) Every accumulation point $\bP^{x}$ of  $(\bP^{n,x})_n$ from \cref{prop:tightness} belongs to $\cM_{L,g,\gamma}(x)$ and satisfies the \ky property with $\theta_{KY}=\theta_1$ and $C_{KY}=C$. Moreover, $\bP^x$ satisfies \eqref{est:Kry-b-fin-M} also for $f\in\LL^{q'}_p$, $q'\in[q_*,\infty)$, with $\|f\|_{\LL^{q'}_p}$ on the right-hand side.
  \item (Weak solution) There exists a weak solution $(\tilde\Omega,\tilde{\mathcal{F}},(\tilde{\mathcal{F}}_t),\tilde{\mathcal{P}};X,W)$ to \eqref{eq:rough-SDE} starting from $x$ with the \ky property, whose law is an accumulation point of $(\bP^{n,x})_n$.
\end{enumerate}
\end{theorem}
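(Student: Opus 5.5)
The proof passes to the limit along a weakly convergent subsequence $\bP^{n_k,x}\to\bP^x$ supplied by \cref{prop:tightness}; we relabel it as $\bP^{n}$. Three things are needed, in this order: (a) the \ky property for the limit, (b) the martingale property of $M^f$, and (c) a weak solution built from the martingale solution.

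\emph{Step (a).} The plan is to transfer \eqref{est:Kry-b} and \eqref{est:Kry-g} to the limit in the tested form
\[
\bE^{\bP^n}\Big[\Phi\Big(\int_{s'}^t|f(r,w_r)|dr+\Big|\int_{s'}^t\rho(r,w_r)d\Upsilon_r\Big|\Big)\Big]\le C|t-s'|^{\theta_1}(\dots)\,\bE^{\bP^n}\Phi ,
\]
where $\Phi\ge0$ is bounded, continuous and $\cB_{s'}$-measurable. For $f,\rho$ continuous and bounded with compact support and $\Upsilon$ smooth, the functionals $\omega\mapsto\int\rho(r,\omega_r)d\Upsilon_r=\int\rho(r,\omega_r)\Upsilon'_rdr$ and $\int|f|(r,\omega_r)dr$ are continuous on $\mC$, so the inequality passes to $\bP^x$. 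Two approximation steps remain.
\begin{itemize}
\item General $f\in\LL^{q'}_p$ is reached by the absolute continuity argument in the proof of \cref{lem:Krylov}, namely monotone class plus Fatou.
\item General $\rho\in\sol^0_{1,p}$ is reached through spatial mollifications $\rho*\varphi_m$ and cutoffs, which stay bounded in $\sol^0_{1,p}$ and converge in $L_p$ for each $r$. Here I use that under $\bP^x$ the occupation measure already has an $L_{p'}$ density by the $f$-part, together with $\Upsilon'$ bounded.
\end{itemize}
A monotone-class argument then extends from continuous $\Phi$ to all bounded $\cB_{s'}$-measurable $\Phi$.

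\emph{Step (b).} With the \ky property in hand, \cref{lem:KY-cons} gives conditions (1) and (2) of \cref{def:mar-sol}. For the martingale property, fix $f\in C_c^\infty$ and apply It\^o's formula to $X^n$, so that
\[
f(t,w_t)-f(0,x)-\int_0^t L^nf(r,w_r)\,dr-\int_0^t(g\cdot\nabla f)(r,w_r)\,d\gamma^n_r
\]
is a $\bP^n$-martingale; here $L^n$ is the operator $L$ of \eqref{def:L} with coefficients $a^n,b^n$. I will show that for $s'<t$ and $\Phi$ as above, $\bE^{\bP^n}[\Phi(M^{f,n}_t-M^{f,n}_{s'})]\to\bE^{\bP^x}[\Phi(M^f_t-M^f_{s'})]$. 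The terms are handled as follows.
\begin{itemize}
\item The terms $f(t,w_t)$ and $\partial_tf$ are continuous functionals, so they converge by weak convergence.
\item For $a^n\to a$: this converges locally uniformly, and $\nabla^2f$ has compact support.
\item For $b^n\cdot\nabla f$: \eqref{eq:bn-conv} combined with the uniform Krylov bound \eqref{est:Kry-b} at $q'=q_*$ replaces $b^n$ by $b$ up to a small error uniformly in $n$. I then replace $b$ by a continuous compactly supported $\tilde b$ close in $\LL^{q_*}_p(B_R)$, using Krylov under both $\bP^n$ and $\bP^x$ (this is where the $\LL^{q'}_p$ extension of (1) is used), and pass to the weak limit for $\tilde b$.
\item The Young term is the main obstacle. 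I fix a large $m$ and split $\int\rho\,d\gamma^n=\int\rho\,d\gamma^m+\int\rho\,d(\gamma^n-\gamma^m)$ with $\rho=g\cdot\nabla f$. By \eqref{est:Krylov} the second piece is bounded by $C\|\gamma^n-\gamma^m\|_{C^{\beta_0}}$ uniformly in $n$, and the same bound holds under $\bP^x$ with $D(\rho)$ by \cref{lem:KY-cons}(ii). The first piece is $\int\rho(r,w_r)(\gamma^m)'_r\,dr$ with $\rho$ only in $\sol_{1,p}$, which is continuous in $x$ by Sobolev embedding but possibly without compact support. Truncating with $\chi(x/R)$ and using the tightness moment bounds \eqref{est:Xn-mom} makes this a continuous functional up to small error. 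Letting $n\to\infty$ and then $m\to\infty$ closes the argument.
\end{itemize}
The limit $M^f$ is continuous and integrable, which yields (1).

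\emph{Step (c).} Starting from the martingale solution $\bP^x$, I take $f(x)=x_i$ and $f(x)=x_ix_j$, localised by cutoffs and justified via moments from \cref{lem:KY-cons} and \eqref{est:Lm-X-mom}-type bounds obtained from the \ky property. This shows that $N_t:=w_t-x-\int_0^tb(r,w_r)dr-D_t(g)$ is a continuous martingale with $\langle N^i,N^j\rangle_t=\int_0^ta^{ij}(r,w_r)dr$. By nondegeneracy of $\sigma$, the standard representation theorem produces a Brownian motion $W$ on the same space, possibly enlarged, with $N=\int\sigma(r,w_r)dW_r$. The \ky property holds relative to the filtration generated by $w$; when an enlargement is needed, it is independent, so the property still holds. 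Condition (2) of \cref{def:weak-sol} is \cref{lem:KY-cons}(ii) with $\rho=g$. This gives the weak solution, whose law is $\bP^x$.
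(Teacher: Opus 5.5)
Your steps (a) and (b) are essentially the paper's proof of part (1), with two simplifications available. First, every $\rho\in\sol^0_{1,p}$ is already bounded and jointly continuous on $[0,T]\times\Rd$, by \eqref{eq:S1e-emb} and Step 2 of the proof of \cref{lem:reg-un}. Hence $w\mapsto|\int_{s'}^t\rho(r,w_r)\Upsilon'_r\,dr|$ is bounded and continuous on $\mC$ and passes to the weak limit directly, so your mollification-plus-cutoff detour is not needed. It is also slightly harmful: the cutoffs $\chi(\cdot/R)$ need not preserve the $\sol^0_{1,p}$-norm, so they would give the KY bound only up to a factor depending on $\chi$, not with $C_{KY}=C$ as stated. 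Second, in (b) $g\cdot\nabla f$ has compact support because $f\in C^\infty_c$, so no truncation is needed there either.

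The genuine gap is in (c). Under $\bP^x$ you have $w=x+\int_0^\cdot b(r,w_r)dr+D(g)+N$, where $D(g)$ is only H\"older of order $\theta_1>\frac12$ and in general not of bounded variation. So $w$ is in general not a semimartingale, and the Stroock--Varadhan computation of $\langle N^i,N^j\rangle$ from the test functions $x_i$ and $x_ix_j$ does not go through as you state it.

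\begin{enumerate}[(a)]
\item For $f=x_ix_j$, the term $D(g\cdot\nabla f)$ is not defined by \cref{def:mar-sol}, since $g^ix_j\notin\sol^0_{1,p}$. For cutoffs $f_R$ the norms $\|g\cdot\nabla f_R\|_{\sol^0_{1,p}}$ need not stay bounded, so \cref{lem:KY-cons}~(v) gives no limit; you need a stopping-time localisation.
\item More importantly, to deduce that $N^iN^j-\int_0^\cdot a^{ij}(r,w_r)dr$ is a local martingale you must expand $w^iw^j$. This requires the product rule $N^j_tD_t(g^i)=\int_0^tD_r(g^i)\,dN^j_r+\int_0^tN^j_r\,dD_r(g^i)$, with a Young integral in the last term.
\item It also requires identifying the Young part of $M^{x_ix_j}$ with $\int w^j\,dD(g^i)+\int w^i\,dD(g^j)$. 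This is an analogue of \eqref{eq:ID} for martingale solutions, whereas the paper proves \eqref{eq:ID} (\cref{lem:Young-Ito}) only for weak solutions, where $N=\int\sigma\,dW$ is already known.
\item The moment bound \eqref{est:Lm-X-mom} comes from BDG for $\int\sigma\,dW$ and is not available for a martingale solution. Moments under $\bP^x$ must instead be inherited from \eqref{est:Xn-mom} through weak convergence.
\end{enumerate}

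These pieces can be supplied with the tools of \cref{lem:Young-Ito}: H\"older regularity of $w$ and of $D(g)$, and continuity of Young integrals. Completed, your route would prove more than the statement, namely that every martingale solution with the KY property is the law of a weak solution with the KY property. No enlargement is needed, since $\sigma$ is square and nondegenerate and you can take $W=\int\sigma^{-1}(r,w_r)\,dN_r$. The paper avoids all of this by applying Skorokhod's representation to the pairs $(X^n,W^n)$, whose Brownian motions are given, and passing to the limit term by term in the approximating SDE.
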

\begin{proof}
    (1) Let $\bP^{n_k,x}\to\bP^x$ weakly. Fix $0\le s\le t\le T$ and a nonnegative $H\in C_b(\mC)$ which depends only on $(w_r)_{r\le s}$.

    \textit{{\ky} property.} For $\rho\in\sol^0_{1,p}$ and $\Upsilon\in C^\infty([0,T])$, the functional $w\mapsto|\int_s^t\rho(r,w_r)\Upsilon'_rdr|$ is bounded and continuous on $\mC$, because $\rho(r,\cdot)$ is continuous and $\sup_r\|\rho(r,\cdot)\|_{L_\infty}<\infty$ by \eqref{eq:S1e-emb}. The same holds for $w\mapsto\int_s^t|f(r,w_r)|dr$ when $f\in C_c([0,T]\times\Rd)$. Hence, by \eqref{est:Kry-b} and \eqref{est:Kry-g}, for $q'\in[q_*,\infty]$,
    \begin{align}\label{est:KY-limit}
        \bE^{\bP^x}\Big[\Big(\int_s^t|f(r,w_r)|dr+\big|\int_s^t\rho(r,w_r)d\Upsilon_r\big|\Big)H\Big]\le C|t-s|^{\theta_1}\big(\|f\|_{\LL^{q'}_p}+\|\Upsilon\|_{C^{\beta_0}}\|\rho\|_{\sol^0_{1,p}}\big)\bE^{\bP^x}H
    \end{align}
    for $f\in C_c([0,T]\times\Rd)$. To extend this to measurable $f$, let $f\ge0$ be bounded with compact support, and let $f_j:=f\ast\psi_j$ be space-time mollifications. Then $f_j\in C_c$, $\|f_j\|_{\LL^{q'}_p}\le\|f\|_{\LL^{q'}_p}$ by Minkowski's inequality, and $f_j\to f$ a.e. The measure $\mu_H(A):=\bE^{\bP^x}[H\int_s^t\1_A(r,w_r)dr]$ is absolutely continuous with respect to $drdx$, by \eqref{est:KY-limit} with $q'=q_*$ and approximation of null sets by open sets. Hence Fatou's lemma gives \eqref{est:KY-limit} for $f$, and monotone convergence extends it to all measurable $f$. Since such $H$ generate $\cB_s$, a monotone class argument gives \eqref{est:Kry-b-fin-M} for $\bP^x$, also in the $\LL^{q'}_p$-version. By \cref{lem:KY-cons}, conditions (1) and (2) of \cref{def:mar-sol} hold.

    \textit{Martingale property.} Let $f\in C_c^\infty([0,T]\times\mR^{d})$. By It\^o's formula for $X^n$ (\cref{lem:Ito}), the process
        \begin{align*}
        M^{n,f}_t:=f(t,w_t)-f(0,x)&-\int_0^t\big(Lf+(b^n-b)\cdot\nabla f+\frac12(a^{n,ij}-a^{ij})\partial^2_{ij}f\big)(r,w_r)dr
        \\&-\int_0^t(g\cdot\nabla f)(r,w_r)d\gamma^n_r
    \end{align*}
    is a $(\cB_t)$-martingale under $\bP^{n,x}$, i.e.\ $\bE^{\bP^{n,x}}[(M^{n,f}_t-M^{n,f}_s)H]=0$. We pass to the limit along $n_k$ in each term. The functionals $$w\mapsto f(t,w_t) \text{  and } w\mapsto\int_s^t(\partial_tf+\frac12a^{ij}\partial^2_{ij}f)(r,w_r)dr$$ are bounded and continuous. Next, $b\cdot\nabla f\in\LL^{q_*}_p$; approximating it in $\LL^{q_*}_p$ by $h_j\in C_c$, and using \eqref{est:Kry-b} with $q'=q_*$ for $\bP^{n,x}$ and its $\LL^{q_*}_p$-version for $\bP^x$, we get $$\bE^{\bP^{n_k,x}}[H\int_s^t(b\cdot\nabla f)(r,w_r)dr]\to\bE^{\bP^{x}}[H\int_s^t(b\cdot\nabla f)(r,w_r)dr];$$ moreover, $$\bE^{\bP^{n,x}}\int_s^t|(b^n-b)\cdot\nabla f|(r,w_r)dr\lesssim\|(b^n-b)\cdot\nabla f\|_{\LL^{q_*}_p}\to0$$ by \eqref{est:Kry-b} and \eqref{eq:bn-conv}, while $(a^n-a)^{ij}\partial^2_{ij}f\to0$ uniformly, since $\sigma^n\to\sigma$ uniformly. Finally, put $\rho:=g\cdot\nabla f\in\sol^0_{1,p}$. For fixed $m$, the functional $w\mapsto H(w)\int_s^t\rho(r,w_r)d\gamma^m_r$ is bounded and continuous, while by \eqref{est:Kry-g} and \cref{lem:KY-cons} (ii),
    \begin{align*}
        &\sup_{n\ge m}\bE^{\bP^{n,x}}\Big|\int_s^t\rho(r,w_r)d(\gamma^n_r-\gamma^m_r)\Big|+\bE^{\bP^x}\Big|D_t(\rho)-D_s(\rho)-\int_s^t\rho(r,w_r)d\gamma^m_r\Big|
        \\&\qquad\lesssim\sup_{n\ge m}\|\gamma^n-\gamma^m\|_{C^{\beta_0}}\|\rho\|_{\sol^0_{1,p}},
    \end{align*}
    which tends to $0$ as $m\to\infty$. Therefore $$\bE^{\bP^x}[(M^f_t-M^f_s)H]=\lim_k\bE^{\bP^{n_k,x}}[(M^{n_k,f}_t-M^{n_k,f}_s)H]=0.$$ Since $M^f$ is integrable by \cref{lem:KY-cons}, a monotone class argument shows that $M^f$ is a $(\cB_t)$-martingale under $\bP^x$.

    (2) Each $X^n$ is defined together with a Brownian motion $W^n$ (on a probability space that may depend on $n$). By \cref{prop:tightness}, the laws of $(X^n,W^n)$ on $C([0,T];\R^{2d})$ are tight. By Skorokhod's representation theorem, along a subsequence (not relabelled) there are processes $(\tilde X^n,\tilde W^n)$ and $(X,W)$ on a common probability space $(\tilde\Omega,\tilde\cF,\tilde\cP)$ such that $(\tilde X^n,\tilde W^n)$ has the law of $(X^n,W^n)$ and $(\tilde X^n,\tilde W^n)\to(X,W)$ uniformly on $[0,T]$, $\tilde\cP$-a.s. Let $(\tilde\cF_t)$ be the usual augmentation of the filtration generated by $(X,W)$. Then: 
    \begin{enumerate}[(a)]
        \item $\tilde W^n$ is a Brownian motion with respect to the filtration generated by $(\tilde X^n,\tilde W^n)$, and $\tilde X^n$ solves \eqref{eq:SDE-m} with $\tilde W^n$, since these properties depend only on the joint law. Passing to the limit in the finite-dimensional distributions shows that $W$ is an $(\tilde\cF_t)$-Brownian motion.
        \item $X$ has the \ky property with respect to $(\tilde\cF_t)$, with the constants of \cref{lem:Krylov}, also in the $\LL^{q_*}_p$-version: this follows exactly as in (1), with test functionals $H$ of $(X_r,W_r)_{r\le s}$.
        \item Fix $t$. By \eqref{est:Krylov} with $\Upsilon=\gamma^n-\gamma^m$ and $\rho=g$, $\sup_{n\ge m}\mE|\int_0^tg(r,\tilde X^n_r)d(\gamma^n-\gamma^m)_r|\lesssim\sup_{n\ge m}\|\gamma^n-\gamma^m\|_{C^{\beta_0}}\to0$ as $m\to\infty$. For fixed $m$, $\int_0^tg(r,\tilde X^n_r)d\gamma^m_r\to\int_0^tg(r,X_r)d\gamma^m_r$ a.s., by dominated convergence. Together with \cref{lem:KY-cons} (ii) for $X$, this shows that condition (2) of \cref{def:weak-sol} holds and that $\int_0^tg(r,\tilde X^n_r)d\gamma^n_r\to G_t$ in $L_1(\tilde\cP)$.
        \item Fix $t$. Write $b=b_1+b_2$ as in \cref{prop:tightness}, and let $\eta>0$. Choose $R$ with $\sup_n\tilde\cP(\sup_r|\tilde X^n_r|>R)+\tilde\cP(\sup_r|X_r|>R)<\eta$, and $h\in C_c([0,T]\times\Rd)$ with $\|b_1+b_2\1_{B_{R+1}}-h\|_{\LL^{q_*}_p}<\eta$. By \eqref{est:Kry-b} with $q'=q_*$ for $\tilde X^n$ and the $\LL^{q_*}_p$-version of \eqref{est:Kry-b-fin} for $X$, the $L_1$-distances between $\int_0^tb(r,\tilde X^n_r)dr$ and $\int_0^th(r,\tilde X^n_r)dr$, and between $\int_0^tb(r,X_r)dr$ and $\int_0^th(r,X_r)dr$, are at most $C\eta$ outside events of probability $\eta$, while $\int_0^th(r,\tilde X^n_r)dr\to\int_0^th(r,X_r)dr$ a.s. Moreover, on $\{\sup_r|\tilde X^n_r|\le R\}$ we have $|\int_0^t(b^n-b)(r,\tilde X^n_r)dr|\le\int_0^t|(b^n-b)\1_{B_R}|(r,\tilde X^n_r)dr$, whose expectation tends to $0$ by \eqref{est:Kry-b} and \eqref{eq:bn-conv}. Hence $\int_0^tb^n(r,\tilde X^n_r)dr\to\int_0^tb(r,X_r)dr$ in probability.
        \item Since $\sigma^n\to\sigma$ uniformly, $\sigma$ is bounded and uniformly continuous, and $(\tilde X^n,\tilde W^n)\to(X,W)$ uniformly a.s., the standard argument (approximating $\sigma(r,X_r)$ by step processes) gives $\int_0^t\sigma^n(r,\tilde X^n_r)d\tilde W^n_r\to\int_0^t\sigma(r,X_r)dW_r$ in probability.
    \end{enumerate}
    Passing to the limit in $$\tilde X^n_t=x+\int_0^tb^n(r,\tilde X^n_r)dr+\int_0^tg(r,\tilde X^n_r)d\gamma^n_r+\int_0^t\sigma^n(r,\tilde X^n_r)d\tilde W^n_r$$ shows that $(\tilde\Omega,\tilde\cF,(\tilde\cF_t),\tilde\cP;X,W)$ is a weak solution with the {\ky}  property. Its law is the limit of the laws of $X^n$ along the subsequence.
\end{proof}

\subsection{Uniqueness}
The key to uniqueness is the following conditional representation through the Young PDE.
\begin{lemma}\label{lem:rep}
   Let $(s,y)\in[0,T)\times\Rd$, and let $\bP\in\mathcal{M}_{L,g,\gamma}(s,y)$ satisfy the {\ky} property. Let $t_0\in(s,T]$, $\phi\in C^\infty_c(\Rd)$, and let $u\in\sol^0_{2,p}([0,t_0])$ be the solution, provided by \cref{thm:solution-YPDE}, to
   \begin{align}\label{eq:PDE-0-rep}
   d_tu+\frac{1}{2}a^{ij}\partial^2_{ij} udt+b\cdot \nabla udt+g\cdot \nabla u d\gamma_t=0 \text{ on } [0,t_0]\times \mR^d, \quad u(t_0)=\phi.
\end{align}
Then for every $r\in[s,t_0]$,
    \begin{align}\label{eq:rep-mar}
\bE^{\bP}\big(\phi(w_{t_0})\big|\cB_r\big)=u(r,w_r),\qquad \bP\text{-a.s.}
    \end{align}
In particular, $\bE^{\bP}\phi(w_{t_0})=u(s,y)$.
\end{lemma}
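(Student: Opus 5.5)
The plan is to show that $M^u_t:=u(t,w_t)-u(s,y)$, $t\in[s,t_0]$, is a $(\cB_t)$-martingale under $\bP$. Since $u(t_0)=\phi$, this gives $\bE^\bP(\phi(w_{t_0})|\cB_r)=u(r,w_r)$ for every $r\in[s,t_0]$, and taking $r=s$ gives $\bE^\bP\phi(w_{t_0})=u(s,y)$. The Young term $g\cdot\nabla u\,d\gamma$ prevents us from applying \cref{lem:MP-Sobolev} to $u$ directly, because in that lemma $\partial_tu+\frac12a^{ij}\partial^2_{ij}u$ must lie in $\LL^\infty_p$.

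\emph{Step 1: smooth approximations of $\gamma$.} Fix smooth $\gamma^n\to\gamma$ in $C^{\beta-}$. Let $u^n\in\sol^0_{2,p}([0,t_0])$ be the solution from \cref{thm:solution-YPDE} of \eqref{eq:PDE-0-rep} with $\gamma$ replaced by $\gamma^n$ and $u^n(t_0)=\phi$. Since $\gamma^n$ is $C^1$, the same theorem gives $\partial_tu^n\in\mathbb D^0$. Moreover
\[
\varrho^n:=\partial_tu^n+\tfrac12a^{ij}\partial^2_{ij}u^n=-b\cdot\nabla u^n-(\gamma^n)'g\cdot\nabla u^n ,
\]
which lies in $\LL^\infty_p$ by \eqref{est:bnablau} and \eqref{eq:S1e-emb}. \cref{lem:MP-Sobolev} therefore applies to $u^n$. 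After the $b\cdot\nabla u^n$ terms cancel, it says that
\[
M^n_t:=u^n(t,w_t)-u^n(s,y)+\int_s^t(g\cdot\nabla u^n)(r,w_r)\,d\gamma^n_r-D_t(g\cdot\nabla u^n)
\]
is a $(\cB_t)$-martingale under $\bP$.

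\emph{Step 2: passing to the limit.} By the convergence argument in Step 2 of the proof of \cref{thm:solution-YPDE} (run with $\beta_0$ in place of $\beta'$), $u^n\to u$ in $\sol^0_{2,p}$, and \cref{Thm:est-roughPDE} gives $\sup_n\|u^n\|_{\sol^0_{2,p}}<\infty$. Together with \cref{cor:stab}, this makes $u^n\to u$ uniformly on $[0,t_0]\times\Rd$, so $u^n(t,w_t)-u^n(s,y)\to u(t,w_t)-u(s,y)$ uniformly and boundedly. It remains to show that the Young correction
\[
E^n_t:=\int_s^t\rho^n(r,w_r)\,d\gamma^n_r-D_t(\rho^n),\qquad \rho^n:=g\cdot\nabla u^n ,
\]
tends to $0$ in $L_1(\bP)$. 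By \cref{lem.prodS}, $\rho^n$ is bounded in $\sol^0_{1,p}$. I would write $E^n_t$ as the limit, as $m\to\infty$, of $\int_s^t\rho^n(r,w_r)\,d(\gamma^n-\gamma^m)_r$, using \cref{lem:KY-cons}(ii). Then \eqref{est:Krylov-fin-M} with $\Upsilon=\gamma^n-\gamma^m$, followed by Fatou's lemma, gives
\[
\bE^\bP|E^n_t|\lesssim\|\gamma^n-\gamma\|_{C^{\beta_0}}\sup_n\|\rho^n\|_{\sol^0_{1,p}}\to0 .
\]
Since $L_1$-limits of martingales are martingales, $M^u$ is a martingale, which proves the claim.

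\emph{Main obstacle.} The delicate point is the uniformity in $n$ in the last estimate: the \ky{} bound must be applied to the $n$-dependent integrands $\rho^n$, with constants controlled only by $\|\rho^n\|_{\sol^0_{1,p}}$. This is exactly where the \ky{} class is used, and it requires the convergence $\gamma^n\to\gamma$ in $C^{\beta_0}$ rather than in $C^\beta$. A secondary check is that $u^n$ satisfies every hypothesis of \cref{lem:MP-Sobolev}: $u^n(t_0)=\phi\in C^\infty_c$, and $\varrho^n\in\LL^\infty_p$ requires $\nabla u^n$ to be bounded.
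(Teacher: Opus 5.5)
Your proposal is correct and follows essentially the paper's argument: approximate $\gamma$ by smooth $\gamma^n$, apply \cref{lem:MP-Sobolev} to $u^n$, control the Young correction with the KY bound for $\Upsilon=\gamma^n-\gamma^m$ plus Fatou, using the uniform bound on $g\cdot\nabla u^n$ in $\sol^0_{1,p}$, and use $u^n\to u$ uniformly. The only difference is packaging. You pass to the limit in $L_1(\bP)$ at the level of the martingales $M^n$. The paper instead bounds $|\bE^\bP(\phi(w_{t_0})|\cB_r)-u^n(r,w_r)|$ directly, via the conditional KY estimate and the conditional Fatou lemma.
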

\begin{proof}
Let $(\gamma^n)\subset C^\infty([0,T])$ converge to $\gamma$ in $C^{\beta-}$, and let $u^n\in\sol^0_{2,p}([0,t_0])$ be the solution to \eqref{eq:PDE-0-rep} with $\gamma^n$ in place of $\gamma$ (\cref{thm:solution-YPDE}). Then $$\partial_tu^n+\frac12a^{ij}\partial^2_{ij}u^n=\varrho^n:=-(b+(\gamma^n)'g)\cdot\nabla u^n\in\LL^\infty_p([0,t_0])$$ by \cref{thm:solution-YPDE}, so that $\varrho^n+b\cdot\nabla u^n=-(\gamma^n)'g\cdot\nabla u^n$, and \cref{lem:MP-Sobolev} shows that, with $\rho_n:=g\cdot\nabla u^n$,
\begin{align*}
    M^{u^n}_{t_0}-M^{u^n}_r=\phi(w_{t_0})-u^n(r,w_r)+\int_r^{t_0}\rho_n(v,w_v)d\gamma^n_v-\big(D_{t_0}(\rho_n)-D_r(\rho_n)\big)
\end{align*}
has zero conditional expectation given $\cB_r$. By \cref{def:mar-sol} (1), $$\int_r^{t_0}\rho_nd\gamma^n-(D_{t_0}(\rho_n)-D_r(\rho_n))=\lim_m\int_r^{t_0}\rho_n(v,w_v)d(\gamma^n-\gamma^m)_v\quad \text{ in probability,}$$  so by \eqref{est:Kry-b-fin-M} and the conditional Fatou lemma,
\begin{align*}
    \Big|\bE^\bP\big(\phi(w_{t_0})\big|\cB_r\big)-u^n(r,w_r)\Big|\le C_{KY}T^{\theta_{KY}}\|\gamma^n-\gamma\|_{C^{\beta_0}}\|\rho_n\|_{\sol^0_{1,p}}.
\end{align*}
By \cref{Thm:est-roughPDE} (with $\beta_0$) and \cref{lem.prodS}, $$\sup_n\|\rho_n\|_{\sol^0_{1,p}}\lesssim\|g\|_{\sol_{1,p}}\sup_n\|u^n\|_{\sol^0_{2,p}}<\infty,$$ so the right-hand side tends to $0$. Moreover, $u^n-u$ solves \eqref{main-PDE} on $[0,t_0]$ with $\gamma^n$ in place of $\gamma$, $f=0$, zero terminal value, and the Young term $\rho d\bar\gamma$ given by $-g\cdot\nabla u\,d(\gamma^n-\gamma)$. Hence, by \cref{Thm:est-roughPDE} (with $\beta_0$), $$\sup_{r,x}|u^n(r,x)-u(r,x)|\lesssim\|\gamma^n-\gamma\|_{C^{\beta_0}}\|g\|_{\sol_{1,p}}\|u\|_{\sol^0_{2,p}}\to0.$$ Letting $n\to\infty$ gives \eqref{eq:rep-mar}.
\end{proof}

\begin{theorem}[Uniqueness]
   \label{thm:unique}
   For every $(s,y)\in[0,T)\times\Rd$ there is exactly one $\bP\in\mathcal{M}_{L,g,\gamma}(s,y)$ with the \ky property; we denote it by $\bP^{s,y}$, and write $\bP^x:=\bP^{0,x}$. Moreover:
   \begin{enumerate}[(i)]
       \item $\bP^{n,x}\to\bP^x$ weakly as $n\to\infty$;
       \item there exists a weak solution to \eqref{eq:rough-SDE} starting from $x$ with the \ky property, and the law of every such weak solution equals $\bP^x$. In particular, weak solutions with the \ky property are unique in law.
   \end{enumerate}
\end{theorem}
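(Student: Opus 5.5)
Uniqueness rests on \cref{lem:rep}, which fixes the one-dimensional marginals, combined with the classical Stroock--Varadhan induction for the finite-dimensional distributions. Existence when $s=0$ is \cref{thm:existence}. For general $(s,y)$ we run the same construction started from $y$ at time $s$, taking $X^n\equiv y$ on $[0,s]$. The Krylov estimates of \cref{lem:Krylov} and the tightness argument are unchanged; the only difference is that conditioning now starts at $s$.

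Let $\bP,\bQ\in\mathcal M_{L,g,\gamma}(s,y)$ both have the \ky property. We prove by induction on $k$ that
\begin{align*}
\bE^\bP\prod_{i=1}^k\phi_i(w_{t_i})=\bE^\bQ\prod_{i=1}^k\phi_i(w_{t_i})
\end{align*}
for all $s\le t_1<\dots<t_k\le T$ and $\phi_i\in C^\infty_c(\Rd)$. For $k=1$ this is \cref{lem:rep}: both sides equal $u(s,y)$, and $u$ depends only on the coefficients. For the induction step, fix $\phi_1,\dots,\phi_k$ and times $t_1<\dots<t_k$. Let $u^{(k)}$ be the solution of \eqref{eq:PDE-0-rep} on $[0,t_k]$ with terminal value $\phi_k$. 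Conditioning on $\cB_{t_{k-1}}$ and applying \eqref{eq:rep-mar} with $r=t_{k-1}$ gives
\begin{align*}
\bE^\bP\prod_{i\le k}\phi_i(w_{t_i})=\bE^\bP\Big[\prod_{i\le k-1}\phi_i(w_{t_i})\,\psi(w_{t_{k-1}})\Big],\qquad \psi:=u^{(k)}(t_{k-1},\cdot),
\end{align*}
and the same identity holds under $\bQ$. The function $\psi\in B^2_{p,\infty}\hookrightarrow C_b$ is continuous and bounded but not compactly supported and smooth. We therefore extend the induction hypothesis from $C_c^\infty$ test functions to bounded continuous ones by a monotone class or weak convergence argument. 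Since the product $\phi_{k-1}\psi$ is bounded and continuous, the hypothesis at level $k-1$ applies and closes the induction. Equality of finite-dimensional distributions on $\mC$ then gives $\bP=\bQ=:\bP^{s,y}$.

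For (i), \cref{prop:tightness} gives tightness of $(\bP^{n,x})_n$. By \cref{thm:existence}(1), every accumulation point lies in $\mathcal M_{L,g,\gamma}(x)$ and has the \ky property. Uniqueness identifies every such limit with $\bP^x$, so the whole sequence converges. For (ii), existence is \cref{thm:existence}(2). If $X$ is any weak solution with the \ky property, \cref{prop:w-m} shows that its law lies in $\mathcal M_{L,g,\gamma}(x)$ and has the \ky property, so by uniqueness it equals $\bP^x$.

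The main technical point is the induction step. The conditional representation \eqref{eq:rep-mar} must be applied under a \ky measure for a time $r=t_{k-1}>s$, where the path has not been restarted. This is fine, because \cref{lem:rep} already gives the identity for every $r\in[s,t_0]$. What remains is to justify testing against the bounded continuous function $\psi$ rather than against $C^\infty_c$ functions. We handle this by approximating $\psi$ locally uniformly by $C_c^\infty$ functions with bounded sup norm and using dominated convergence under both measures.
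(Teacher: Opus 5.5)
Your proposal is correct and follows essentially the paper's proof: existence via \cref{thm:existence}, where the paper handles general $(s,y)$ by time-shifting $b,\sigma,g,\gamma$ rather than re-running the construction. Uniqueness is by induction on the number of time points using \cref{lem:rep} at $r=t_{k-1}$, and (i), (ii) follow from \cref{prop:tightness}, \cref{thm:existence} and \cref{prop:w-m}; the only cosmetic difference is that the paper states the induction hypothesis directly for test functions in $C_c(\Rd)$, so $\phi_{k-1}\,u^{(k)}(t_{k-1},\cdot)\in C_c(\Rd)$ is covered without your extra approximation step.
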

\begin{proof}
    \textit{Existence.} For $s=0$, existence is \cref{thm:existence} (1). For general $s$, apply \cref{thm:existence} (1) to the shifted coefficients $b(s+\cdot,\cdot)$, $\sigma(s+\cdot,\cdot)$, $g(s+\cdot,\cdot)$ and the shifted path $\gamma_{s+\cdot}$ on $[0,T-s]$, which satisfy the same assumptions, and shift the resulting measure back.

    \textit{Uniqueness.} Let $\bP^1,\bP^2\in\cM_{L,g,\gamma}(s,y)$ have the \ky property. We show by induction on $m$ that
    \begin{align}\label{eq:fdd}
        \bE^{\bP^1}\prod_{i=1}^m\psi_i(w_{t_i})=\bE^{\bP^2}\prod_{i=1}^m\psi_i(w_{t_i})
    \end{align}
    for all $s< t_1<\cdots<t_m\le T$ and all $\psi_1,\ldots,\psi_m\in C_c(\Rd)$. Since both measures are concentrated on paths that equal $y$ on $[0,s]$, this implies $\bP^1=\bP^2$. By uniform approximation, it suffices to consider $\psi_m\in C^\infty_c(\Rd)$. Let $u$ be the solution to \eqref{eq:PDE-0-rep} with $t_0=t_m$ and $\phi=\psi_m$; note that $u$ does not depend on $\bP^i$, and $u(r,\cdot)\in C_b(\Rd)$ for each $r$. If $m=1$, \cref{lem:rep} gives $\bE^{\bP^i}\psi_1(w_{t_1})=u(s,y)$ for $i=1,2$. If $m\ge2$, \cref{lem:rep} with $r=t_{m-1}$ gives
    \begin{align*}
        \bE^{\bP^i}\prod_{i'=1}^m\psi_{i'}(w_{t_{i'}})=\bE^{\bP^i}\Big[\prod_{i'=1}^{m-2}\psi_{i'}(w_{t_{i'}})\,\tilde\psi(w_{t_{m-1}})\Big],\qquad\tilde\psi:=\psi_{m-1}\,u(t_{m-1},\cdot)\in C_c(\Rd),
    \end{align*}
    and the right-hand side does not depend on $i$ by the induction hypothesis (with $m-1$ functions). This proves \eqref{eq:fdd}.

    (i) By \cref{prop:tightness}, \cref{thm:existence} (1) and uniqueness, every subsequence of $(\bP^{n,x})_n$ has a further subsequence converging to $\bP^x$.

    (ii) Existence is \cref{thm:existence} (2). If $X$ is a weak solution with the \ky property, then its law belongs to $\cM_{L,g,\gamma}(x)$ and has the \ky property, by \cref{prop:w-m}; hence it equals $\bP^x$.
\end{proof}

\begin{remark}
Compared with \cite[Theorem 4.19]{FHL21}, when all coefficients in \eqref{eq:rough-SDE} are deterministic, \cref{thm:unique} provides a well-posedness theory for the martingale problem under much weaker regularity conditions on $b$ and $g$, comparable to the classical work of Stroock and Varadhan \cite{SV}, at the price of the nondegeneracy and H\"older continuity of $\sigma$ and the restriction $\beta>\frac12$. Note that \cite{FHL21} also covers degenerate $\sigma$ and the rough regime $\beta>\frac13$.
\end{remark}

\subsection{Markov property}
Since the coefficients $b,\sigma,g$ and the driver $\gamma$ depend on time, the solution to \eqref{eq:rough-SDE} is a time-inhomogeneous Markov process. We state its Markov property in time-homogeneous form by passing to the space-time process. On the state space $\cS:=[0,T]\times\mR^d$ we define, for $h\ge0$ and $F\in C_b(\cS)$,
\begin{align}\label{def:Qh}
    Q_hF(s,y):=\bE^{\bP^{s,y}}F\big(s+h,w_{s+h}\big),\qquad (s,y)\in\cS,\ s+h\le T,
\end{align}
where $\bP^{s,y}$ is the measure from \cref{thm:unique}, for $s<T$, and $Q_0F:=F$.
\begin{theorem}\label{thm:Mark}
    For every $(s,y)\in[0,T)\times\mR^d$, every $r\in[s,T]$, every $h\ge0$ with $r+h\le T$ and every $F\in C_b(\cS)$, the function $Q_hF(r,\cdot)$ is Borel measurable and
\begin{align}\label{eq:Markov}
    \bE^{\bP^{s,y}}\big(F(r+h,w_{r+h})\,\big|\,\cB_r\big)=Q_hF(r,w_r),\qquad \bP^{s,y}\text{-a.s.}
\end{align}
In other words, under $\bP^{s,y}$ the space-time process $(t,w_t)_{t\in[s,T]}$ is a time-homogeneous Markov process on $\cS$ with transition operators $(Q_h)_{h\ge0}$.
Equivalently, if $(\tilde\Omega,\tilde{\mathcal{F}},(\tilde{\mathcal{F}}_t),\tilde{\mathcal{P}};X,W)$ is a weak solution to \eqref{eq:rough-SDE} starting from $x$ with the {\ky} property, then for $r,h,F$ as above,
\begin{align}
    \label{eq:Mark-weak}
    \mE^{\tilde\cP}\big(F(r+h,X_{r+h})\,\big|\,X_v,v\le r\big)=Q_hF(r,X_r),\qquad \tilde\cP\text{-a.s.}
\end{align}
\end{theorem}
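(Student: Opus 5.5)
We prove the Markov property by combining the representation \cref{lem:rep} with uniqueness (\cref{thm:unique}). First consider $F(t,x)=\psi(t)\phi(x)$, and in fact only the value at the single time $r+h$, so take $F(r+h,\cdot)=\phi\in C^\infty_c(\Rd)$. Let $u$ solve \eqref{eq:PDE-0-rep} with $t_0=r+h$ and terminal value $\phi$. By \cref{lem:rep} applied to $\bP^{s,y}$ and to $\bP^{r,z}$ (both in the \ky class), we get $\bE^{\bP^{s,y}}(\phi(w_{r+h})|\cB_r)=u(r,w_r)$ and $\bE^{\bP^{r,z}}\phi(w_{r+h})=u(r,z)$, i.e.\ $Q_hF(r,z)=u(r,z)$. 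Since $u$ is a fixed deterministic function and $u(r,\cdot)\in C_b(\Rd)$ by \eqref{eq:S1e-emb}, this gives both Borel measurability of $Q_hF(r,\cdot)$ and \eqref{eq:Markov} for such $F$. The key point is that $u$ does not depend on the starting point; it is determined by the PDE alone.

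To pass to general $F\in C_b(\cS)$, only the function $x\mapsto F(r+h,x)=:\phi\in C_b(\Rd)$ matters. Approximate $\phi$ by $\phi_k\in C^\infty_c(\Rd)$ with $\sup_k\|\phi_k\|_\infty\le\|\phi\|_\infty$ and $\phi_k\to\phi$ locally uniformly. Dominated convergence for conditional expectations gives convergence of the left side of \eqref{eq:Markov} a.s. along a subsequence. For the right side, $Q_hF_k(r,z)=\bE^{\bP^{r,z}}\phi_k(w_{r+h})\to\bE^{\bP^{r,z}}\phi(w_{r+h})$ for each $z$ by bounded convergence, so measurability is preserved as a pointwise limit. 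Then $Q_hF_k(r,w_r)\to Q_hF(r,w_r)$ pointwise, which identifies the limits. Equivalently, one can use a monotone class argument over bounded Borel $\phi$, which even yields measurability for bounded Borel $F$.

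The main (mild) obstacle is the case $r=s$ and the degenerate case $h=0$. For $h=0$ the identity is trivial, since $Q_0F=F$ and $F(r,w_r)$ is $\cB_r$-measurable. For $r=s$, $w_s=y$ holds $\bP^{s,y}$-a.s., and \cref{lem:rep} with $r=s$ gives $\bE^{\bP^{s,y}}\phi(w_{s+h})=u(s,y)=Q_hF(s,y)$. One should also check that $\bP^{r,z}$ is defined for $r<T$; if $r=T$, then $h=0$. The semigroup identity $Q_{h}Q_{h'}=Q_{h+h'}$ then follows from \eqref{eq:Markov} by the tower property, which justifies the phrase ``time-homogeneous Markov process with transition operators $(Q_h)$''.

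For the weak-solution version \eqref{eq:Mark-weak}, let $X$ be a weak solution with the \ky property. By \cref{prop:w-m} and \cref{thm:unique}(ii), its law is $\bP^{x}=\bP^{0,x}$. The conditional expectation given $\sigma(X_v:v\le r)$ is the pullback under $X$ of the conditional expectation given $\cB_r$ under the law. Hence \eqref{eq:Markov} with $(s,y)=(0,x)$ transfers directly to \eqref{eq:Mark-weak}.
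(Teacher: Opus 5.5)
Your proposal is correct and follows essentially the same route as the paper. In both, \cref{lem:rep} applied to $\bP^{s,y}$ and to $\bP^{r,z}$ identifies both sides of \eqref{eq:Markov} with $u(r,\cdot)$ for $\phi\in C^\infty_c(\Rd)$. A bounded pointwise approximation then extends this to $\phi\in C_b(\Rd)$, with Borel measurability coming from $Q_hF(r,\cdot)$ being a pointwise limit of continuous functions. Finally, \eqref{eq:Mark-weak} follows because the law of any weak solution with the \ky property is $\bP^{0,x}$.

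One minor point: conditional dominated convergence already gives a.s.\ convergence of the left-hand side along the whole sequence, so no subsequence is needed.
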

\begin{proof}
For $h=0$ there is nothing to prove, so let $h>0$ and put $t_0:=r+h$ and $\phi:=F(t_0,\cdot)\in C_b(\Rd)$. First let $\phi\in C^\infty_c(\Rd)$, and let $u$ be the solution to \eqref{eq:PDE-0-rep} on $[0,t_0]$ with $u(t_0)=\phi$. By \cref{lem:rep} applied to $\bP^{s,y}$, the left-hand side of \eqref{eq:Markov} equals $u(r,w_r)$, and by \cref{lem:rep} applied to $\bP^{r,z}$ (with $r$ in place of $s$), $$Q_hF(r,z)=\bE^{\bP^{r,z}}\phi(w_{t_0})=u(r,z) \quad\forall z.$$  Hence \eqref{eq:Markov} holds, and $Q_hF(r,\cdot)=u(r,\cdot)$ is continuous. For general $\phi\in C_b(\Rd)$, choose $\phi_k\in C^\infty_c(\Rd)$ with $\sup_k\|\phi_k\|_{L_\infty}<\infty$ and $\phi_k\to\phi$ pointwise. By dominated convergence, $\bE^{\bP^{r,z}}\phi_k(w_{t_0})\to\bE^{\bP^{r,z}}\phi(w_{t_0})$ for every $z$, so $Q_hF(r,\cdot)$ is a pointwise limit of continuous functions, hence Borel; and the conditional dominated convergence theorem gives \eqref{eq:Markov}. Finally, \eqref{eq:Mark-weak} follows from \eqref{eq:Markov} with $(s,y)=(0,x)$, since the law of $X$ is $\bP^x$ by \cref{thm:unique}.
\end{proof}
\appendix
\section{Auxiliary proofs}\label{sec:app1}
Throughout this appendix $B^\nu=B^\nu_{p,\infty}$, $(\Delta_j)_{j\ge0}$ are the Littlewood--Paley blocks from \cref{sub:not}, $S_k:=\sum_{l<k}\Delta_l$, and $N$ denotes a constant depending only on the parameters of the respective statement. We use Bernstein's inequality: if $\hat g$ is supported in a ball of radius $C2^j$, then $\|\nabla^kg\|_{L_r}\le N2^{jk+jd(\frac1q-\frac1r)}\|g\|_{L_q}$ for $1\le q\le r\le\infty$ \cite[Lemma 2.1]{BCD}.

\begin{proof}[Proof of \cref{lem:semi-group-reg}]
Fix $z$ and write $P_{s,t}=P^{a(z)}_{s,t}$. Since $K^{-1}(t-s)I\le\int_s^ta(r,z)dr\le K(t-s)I$, the proof of \cite[Lemma 2.4]{BCD} gives $c,N$ depending only on $d,K$ such that, for $j\ge1$ and $g\in L_p$ with $\hat g$ supported in $\{2^{j-1}\le|\xi|\le2^{j+1}\}$,
\begin{align}\label{est:d-a}
    \|P_{s,t}g\|_{L_p}\le Ne^{-c4^j(t-s)}\|g\|_{L_p};
\end{align}
moreover $P_{s,t}$ is a contraction on $L_p$ and commutes with $\Delta_j$. Each estimate is proved for a single block $\Delta_jf$ uniformly in $z$; one then takes the supremum over $j$ (after the $L_p$-norm in $z$ in the barred case, by Minkowski's inequality). The block $j=0$ is bounded directly using $t-s\le T$.

(i) follows from \eqref{est:d-a} and $\sup_{x>0}x^{\mu/2}e^{-cx}<\infty$. (ii) follows from $$(P_{s,t}-I)\Delta_jf=\frac12\int_s^ta^{kl}(r,z)\partial^2_{kl}P_{s,r}\Delta_jf\,dr$$ and Bernstein's inequality, which give $\|(P_{s,t}-I)\Delta_jf\|_{L_p}\le N\min\{1,4^j(t-s)\}\|\Delta_jf\|_{L_p}\le N(t-s)^{\frac\mu2}2^{j\mu}\|\Delta_jf\|_{L_p}$. (iii) follows from (i), (ii) and $P_{\iota,t}-P_{\iota,s}=(P_{s,t}-I)P_{\iota,s}$. For (iv), by \eqref{est:d-a}, $2^{j(\nu+\mu)}\|\Delta_j\int_s^tP_{s,r}h_rdr\|_{L_p}\le N2^{j\mu}\min\{t-s,4^{-j}\}\sup_r\|h_r\|_{B^\nu}\le N(t-s)^{1-\frac\mu2}\sup_r\|h_r\|_{B^\nu}$. For (v), by \eqref{est:d-a} and H\"older's inequality with the conjugate exponent $q''$ of $q'$,
$$2^{j\mu}\Big\|\Delta_j\int_s^tP_{s,r}h_rdr\Big\|_{L_p}\le N2^{j\mu}\min\{(t-s)^{\frac1{q''}},4^{-\frac j{q''}}\}\|h\|_{L^{q'}([s,t];L_p)},$$
and $\min\{A,B\}\le A^{1-\vartheta}B^\vartheta$ with $\vartheta=\frac{\mu q''}2\in[0,1]$ (this is where $\mu\le2-\frac2{q'}$ is used) gives the claim.
\end{proof}

\begin{proof}[Proof of \cref{lem.prodS} (i), (ii) and (iv)]
We use Bony's decomposition $mF=T_mF+T_Fm+R(m,F)$, with $T_mF:=\sum_kS_{k-1}m\,\Delta_kF$ and $R(m,F):=\sum_{|k-l|\le1}\Delta_km\,\Delta_lF$; the Fourier support of the $k$-th term of $T_mF$ (and of $T_Fm$) is contained in an annulus of size $2^k$, and that of the $k$-th term of $R$ in a ball of radius $N2^k$ \cite[Section 2.8]{BCD}. Recall also that $\|m\|_{C^\varrho}\approx\sup_j2^{j\varrho}\|\Delta_jm\|_{L_\infty}$ for $\varrho\in(0,1)$, and $\le$ holds for $\varrho=1$.

(i) Fix $z$ and write $m=m^z$, $F=F^z$. For $T_mF$: $\|\Delta_jT_mF\|_{L_p}\le N\|m\|_{L_\infty}\sum_{|k-j|\le4}\|\Delta_kF\|_{L_p}$. For $T_Fm$: $\|\Delta_jT_Fm\|_{L_p}\le N\sum_{|k-j|\le4}\|S_{k-1}F\|_{L_p}\|\Delta_km\|_{L_\infty}$, with $\|S_{k-1}F\|_{L_p}\le\sum_{l<k}\|\Delta_lF\|_{L_p}$. For $R$: $\|\Delta_jR(m,F)\|_{L_p}\le N\sum_{k\ge j-3}\|\Delta_km\|_{L_\infty}\|\Delta_kF\|_{L_p}$, up to neighbouring indices. Each right-hand side is linear in the blocks of $F$, so after taking the $L_p$-norm in $z$ (Minkowski's inequality) we may replace $\|\Delta_kF\|_{L_p}$ by $(\int\|\Delta_kF^z\|^p_{L_p}dz)^{1/p}\le2^{-k\nu}\|F\|_{\bar B^\nu}$, and $\|\Delta_km\|_{L_\infty}$ by $2^{-k\varrho}\sup_z\|m^z\|_{C^\varrho}$. Multiplying by $2^{j\nu}$, the three terms are bounded by a constant times $\sup_z\|m^z\|_{C^\varrho}\|F\|_{\bar B^\nu}$, since
$\sum_{l<j}2^{(j-l)\nu}2^{-j\varrho}\le N(1+j)2^{-j(\varrho-\max\{\nu,0\})}$ is bounded because $\nu<\varrho$, and $\sum_{k\ge j-3}2^{(j-k)\nu-k\varrho}\le N2^{-j\varrho}$ because $\nu+\varrho>0$.

(ii) The same decomposition with $m=g$ (independent of $z$). The terms $T_gF$ and $R(g,F)$ are bounded as in (i), using $\|g\|_{L_\infty}\lesssim\|g\|_{L_{1,p}}$ and, for $R$, $\|\Delta_kg\|_{L_\infty}\le N2^{k(\frac dp-1)}\|g\|_{L_{1,p}}$ (Bernstein), which is summable against $2^{-k\nu}$ since $\nu+1-\frac dp>0$. For $T_Fg$ we use $\|S_{k-1}F^z\Delta_kg\|_{L_p}\le\|S_{k-1}F^z\|_{L_\infty}\|\Delta_kg\|_{L_p}$, $\|\Delta_kg\|_{L_p}\le N2^{-k}\|g\|_{L_{1,p}}$, and, by Bernstein and Minkowski,
\begin{align*}
    \Big(\int\|S_{k-1}F^z\|_{L_\infty}^pdz\Big)^{1/p}\le N\sum_{l<k}2^{l(\frac dp-\nu)}\|F\|_{\bar B^\nu}\le N\|F\|_{\bar B^\nu},
\end{align*}
since $\nu>\frac dp$. Thus $2^{j\nu}\|\Delta_jT_Fg\|\le N2^{j(\nu-1)}\|g\|_{L_{1,p}}\|F\|_{\bar B^\nu}$, which is bounded since $\nu\le1$.

(iv) Write $[a,\varphi_k]G:=a(\varphi_k\ast G)-\varphi_k\ast(aG)$, where $aG$ is defined by (i); then $[a,\varphi_k]G=\sum_l[a,\varphi_k]\Delta_lG$, the series converging in $L_p$ by the bounds below, and $[a,\varphi_k]g(x)=\int\varphi_k(x-y)(a(x)-a(y))g(y)dy$ for $g\in L_p$. By homogeneity we may assume $\|a\|_{C^\alpha}=\|G\|_{B^\nu}=1$, so that $\|\Delta_lG\|_{L_p}\le2^{-l\nu}$. Let $L\ge0$ be such that $2^L\le k<2^{L+1}$. We use two facts. First, since $\hat\varphi$ is a Schwartz function, if $\hat f$ is supported in $\{c2^l\le|\xi|\le C2^l\}$ with $l\ge1$, then
\begin{align}\label{est:phik-high}
    \|\varphi_k\ast f\|_{L_p}\le N\min\{1,k2^{-l}\}\|f\|_{L_p},
\end{align}
with $N$ depending on $c,C,\varphi$. Second, split $a=a_0+a_1$ with $a_0:=S_La$. Then $|a_0(x)-a_0(y)|\le N|x-y|^\alpha$; $\|\partial^\beta a_0\|_{L_\infty}\le N2^{L(|\beta|-\alpha)}\le Nk^{|\beta|-\alpha}$ for $1\le|\beta|\le2$ (by Bernstein's inequality, $\|\partial^\beta a_0\|_{L_\infty}\le N\sum_{l<L}2^{l(|\beta|-\alpha)}$, which gives the claim unless $|\beta|=\alpha=1$; in that case $\|\nabla a_0\|_{L_\infty}=\|S_L\nabla a\|_{L_\infty}\le N\|\nabla a\|_{L_\infty}\le N$, since the kernel of $S_L$ is bounded in $L_1$ uniformly in $L$); and $\|a_1\|_{L_\infty}\le\sum_{m\ge L}\|\Delta_ma\|_{L_\infty}\le N2^{-L\alpha}\le Nk^{-\alpha}$.

\textit{The part $a_0$.} For every $l$, $\|[a_0,\varphi_k]\Delta_lG\|_{L_p}\le N\int|\varphi_k(w)||w|^\alpha dw\,\|\Delta_lG\|_{L_p}\le Nk^{-\alpha}2^{-l\nu}$. For $l\ge1$ we can do better: $\Delta_lG=\Delta g_l$ with $g_l:=\Delta^{-1}\Delta_lG$, $\|g_l\|_{L_p}\le N4^{-l}2^{-l\nu}$, and integrating by parts,
\begin{align*}
    [a_0,\varphi_k]\Delta_lG(x)=\int\Delta_y\big[\varphi_k(x-y)(a_0(x)-a_0(y))\big]g_l(y)dy.
\end{align*}
The kernel is a sum of the terms $(\Delta\varphi_k)(x-y)(a_0(x)-a_0(y))$, $\nabla\varphi_k(x-y)\cdot\nabla a_0(y)$ and $\varphi_k(x-y)\Delta a_0(y)$ (up to constants), whose $L_1$-norms in $x$ and in $y$ are bounded by $Nk^{2-\alpha}$. By Schur's test, $\|[a_0,\varphi_k]\Delta_lG\|_{L_p}\le Nk^{2-\alpha}4^{-l}2^{-l\nu}$. Using the first bound for $l\le L$ and the second for $l>L$,
\begin{align*}
    \sum_l\|[a_0,\varphi_k]\Delta_lG\|_{L_p}\le Nk^{-\alpha}(1+L)2^{-L\nu}+Nk^{2-\alpha}\sum_{l>L}2^{-l(2+\nu)}\le Nk^{-(\alpha+\nu)}\log(1+k),
\end{align*}
since $\nu\le0$ and $2+\nu>0$.

\textit{The part $a_1$.} By \eqref{est:phik-high}, $\|a_1(\varphi_k\ast\Delta_lG)\|_{L_p}\le Nk^{-\alpha}\min\{1,k2^{-l}\}2^{-l\nu}$, and summing over $l$ gives at most $Nk^{-(\alpha+\nu)}\log(1+k)$, as for $a_0$ (now using $1+\nu>0$). Next, $a_1\Delta_lG=\sum_{m\ge L}\Delta_ma\,\Delta_lG$. There is an integer $N_0$ such that, for $m\le l-N_0$, the Fourier transform of $\Delta_ma\,\Delta_lG$ is supported in an annulus $\{c2^l\le|\xi|\le C2^l\}$; for these terms \eqref{est:phik-high} gives $\|\varphi_k\ast(\Delta_ma\,\Delta_lG)\|_{L_p}\le Nk2^{-l}2^{-m\alpha}2^{-l\nu}$, while for $m>l-N_0$ we simply use $\|\varphi_k\|_{L_1}\le N$. Hence
\begin{align*}
    \|\varphi_k\ast(a_1\Delta_lG)\|_{L_p}\le N\mathbf 1_{\{l\ge L+N_0\}}k^{1-\alpha}2^{-l(1+\nu)}+N2^{-\max\{L,l-N_0\}\alpha}2^{-l\nu}.
\end{align*}
Summing over $l$, the first term gives $Nk^{-(\alpha+\nu)}$ because $1+\nu>0$; the second gives $Nk^{-\alpha}(1+L)2^{-L\nu}$ for $l\le L+N_0$ and $N\sum_{l>L+N_0}2^{-l(\alpha+\nu)}\le Nk^{-(\alpha+\nu)}$ for $l>L+N_0$, because $\alpha+\nu>0$. Altogether, $\sum_l\|[a,\varphi_k]\Delta_lG\|_{L_p}\le Nk^{-(\alpha+\nu)}\log(1+k)$, which is \eqref{est:commutator}.
\end{proof}

\section{Auxiliary lemmas}
\begin{lemma}\label{lem.localization}
        Let $\nu\in\R$, $p\in(1,\infty)$, $r\in\{p,\infty\}$ and $\xi\in C^\infty_c(\Rd)$. Write $\bar B^\nu_{p,r}$ for the space $B^\nu_{p,r}(\R^d_x;L_p(\R^d_z))$, so that $\bar B^\nu_{p,\infty}=\bar B^\nu$. For a tempered distribution $v$ define $(\Xi v)^z:=\xi(\cdot-z)v$. Then
    \begin{align*}
        \|\Xi v\|_{\bar B^\nu_{p,r}}\le C\|v\|_{B^\nu_{p,r}},\qquad\|\Xi v\|_{\Lb_p}\le C\|v\|_{L_p},
    \end{align*}
    and if $\xi\not\equiv0$, also $\|v\|_{B^\nu_{p,r}}\le C\|\Xi v\|_{\bar B^\nu_{p,r}}$, where $C$ depends only on $d,\nu,p,\xi$. Consequently, $\|\Xi u\|_{\solb^\epsilon_{k,p}}\approx\|u\|_{\sol^\epsilon_{k,p}}$ for $k=1,2$ and $\|\Xi f\|_{\bar{\mathbb D}^\epsilon}\approx\|f\|_{\mathbb D^\epsilon}$.
\end{lemma}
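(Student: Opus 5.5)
The proof works block by block in the Littlewood--Paley decomposition, using a commutator identity for $\Delta_j$ and multiplication by the translated cutoff $\xi_z:=\xi(\cdot-z)$, followed by integration in $z$. For the upper bound we write
\[
\Delta_j(\xi_z v)=\sum_{l}\Delta_j\big(\xi_z\,\Delta_l v\big).
\]
Since $\hat\xi_z=e^{-iz\cdot}\hat\xi$ is Schwartz, uniformly in $z$ up to the phase, the term $\Delta_j(\xi_z\Delta_lv)$ is small when $|j-l|$ is large. The precise claim is that for every $M$
\[
\|\Delta_j(\xi_z\Delta_lv)\|_{L_p}\le C_M2^{-M|j-l|}\,\big\|\,|\Delta_lv|*\eta\,(\cdot)\,\mathbf 1\text{-localized near }z\big\|_{L_p},
\]
more concretely $\le C_M2^{-M|j-l|}\|\langle\cdot-z\rangle^{-M}\Delta_lv\|_{L_p}$. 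To prove it, write $\Delta_j(\xi_z f)$ as an integral of $f$ against the kernel $\int\check\varphi_j(x-y)\xi(y-z)\,\cdot$. When $|j-l|\ge3$ the frequency supports of $\Delta_l v$ and of the output only overlap through the Schwartz tail of $\hat\xi$, which gives the decay in $2^{\max(j,l)}$. The spatial weight $\langle\cdot-z\rangle^{-M}$ comes from the compact support of $\xi$ combined with the rapid decay of $\check\varphi_j$ away from the support.

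Next take the $L_p(dz)$ norm. By Fubini, $\int\|\langle\cdot-z\rangle^{-M}g\|_{L_p}^p\,dz=\|\langle\cdot\rangle^{-M}\|_{L_p}^p\|g\|_{L_p}^p$, so
\[
\Big(\int\|\Delta_j(\xi_zv)\|_{L_p}^pdz\Big)^{1/p}\le C\sum_l2^{-M|j-l|}\|\Delta_lv\|_{L_p}.
\]
Choosing $M>|\nu|$, Young's inequality for sequences in $\ell^r$ weighted by $2^{j\nu}$ gives $\|\Xi v\|_{\bar B^\nu_{p,r}}\lesssim\|v\|_{B^\nu_{p,r}}$ for both $r=p$ and $r=\infty$. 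The $L_p$ statement is immediate from Fubini, since $\int\!\!\int|\xi(x-z)|^p|v(x)|^p\,dx\,dz=\|\xi\|_{L_p}^p\|v\|_{L_p}^p$.

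For the reverse bound we use a partition-of-unity reconstruction. Because $\xi\not\equiv0$, the function $c:=\int|\xi(x-z)|^2\,dz=\|\xi\|_{L_2}^2$ is a positive constant, so
\[
v=c^{-1}\int\bar\xi(\cdot-z)\,(\Xi v)^z\,dz .
\]
It therefore suffices to show that the map $R:F\mapsto\int\bar\xi_z F^z\,dz$ is bounded from $\bar B^\nu_{p,r}$ to $B^\nu_{p,r}$. This follows from the same block estimate in dual form. We bound $\|\Delta_j(\bar\xi_zF^z)\|_{L_p}$ by $\sum_l2^{-M|j-l|}$ times a localized norm of $\Delta_lF^z$, integrate in $z$, and use Minkowski together with Hölder in $z$ against the weight $\langle x-z\rangle^{-M}$, which is integrable in $z$ uniformly in $x$. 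This produces $\|\Delta_jRF\|_{L_p}\lesssim\sum_l2^{-M|j-l|}(\int\|\Delta_lF^z\|_{L_p}^pdz)^{1/p}$, and summation proceeds as before. Measurability in $z$ of $\Delta_j(\xi_zv)$ is clear from continuity of translation.

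Finally, the statements for $\solb^\epsilon_{k,p}$ and $\bar{\mathbb D}^\epsilon$ follow by linearity. We apply the bounds at each fixed time and to the increments $u_t-u_s$, with $r=\infty$ for the Besov parts and $r=p$ for the measurability and integration conventions. For $\bar{\mathbb D}^\epsilon$ we apply $\Xi$ to each summand of a decomposition; for the reverse direction we apply $c^{-1}R$ to a decomposition of $\Xi f$, using that $R$ maps $\Lb_p\to L_p$ by Hölder in $z$. The main obstacle is the uniform-in-$z$ off-diagonal decay estimate with the localizing weight, in particular organizing the kernel bounds so that the weight $\langle x-z\rangle^{-M}$ appears explicitly. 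I expect the rest to be routine bookkeeping.
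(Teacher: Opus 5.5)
Your argument is correct and has the same skeleton as the paper's proof: Littlewood--Paley blocks, off-diagonal decay $2^{-M|j-l|}$ from the rapid decay of $\hat\xi$, Fubini in $z$, a Schur bound on $\ell^r$ with $M>|\nu|$, and the reconstruction $R\Xi v=\|\xi\|_{L_2}^2v$ with $R$ bounded via H\"older in $z$. Your treatment of the $\bar{\mathbb D}^\epsilon$ lower bound, applying $R$ to a decomposition of $\Xi f$ and using $R$ bounded from $L_p(\mathbb R^{2d})$ to $L_p$, is more explicit than the paper's.

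The difference is in how the off-diagonal estimate is organized. You prove a weighted bound $\|\Delta_j(\xi_z\Delta_lv)\|_{L_p}\lesssim2^{-M|j-l|}\|\langle\cdot-z\rangle^{-M}\Delta_lv\|_{L_p}$ and then integrate out the weight. The paper instead splits the cutoff itself, $\xi^z=\sum_m\Delta_m\xi^z$, and uses that $\Delta_j(\Delta_m\xi^z\,\Delta_lv)=0$ unless $(m,l)$ lies in a paraproduct-type index set $I_j$. Fubini then gives the exact identity $\int\|\Delta_m\xi^z\,\Delta_lv\|_{L_p}^pdz=\|\Delta_m\xi\|_{L_p}^p\|\Delta_lv\|_{L_p}^p$, and all the decay is in $\|\Delta_m\xi\|_{L_p}\lesssim2^{-mM}$. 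This is shorter, and it is also the cleanest way to prove your weighted bound, via $|\Delta_m\xi(x)|\lesssim_M2^{-mM}\langle x\rangle^{-M}$.

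The difference matters for $R$. Your estimate, stated as an $L_p(dx)$ bound on each $z$-slice, cannot just be integrated in $z$. Minkowski would leave $\int\|\langle\cdot-z\rangle^{-M}\Delta_lF^z\|_{L_p}dz$, which is not controlled by $(\int\|\Delta_lF^z\|_{L_p}^pdz)^{1/p}$; take $F^z=g(\cdot-z)\mathbf 1_{|z|\le R}$, which gives $R^d$ versus $R^{d/p}$. You therefore need a pointwise-in-$x$ version, with the weight $\langle x-z\rangle^{-M}$ on the output variable, before applying H\"older in $z$. That costs an extra Peetre-type step. In the paper's decomposition the $z$-dependence sits entirely in the explicit factor $\Delta_m\xi(x-z)$, so H\"older in $z$ at fixed $x$ gives the factor $\|\Delta_m\xi\|_{L_{p'}}$ at once.
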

\begin{proof}
The statement for $L_p$ is Fubini's theorem. Write $\xi^z:=\xi(\cdot-z)$; then $\Delta_m\xi^z=(\Delta_m\xi)(\cdot-z)$, and, since $\xi$ is a Schwartz function, $\|\Delta_m\xi\|_{L_p}+\|\Delta_m\xi\|_{L_{p'}}\le N_M2^{-mM}$ for every $M$. Fix $M>|\nu|$. There is an integer $N_0$ such that, for any functions $\eta,w$, $\Delta_j(\Delta_m\eta\,\Delta_lw)=0$ unless $j\le\max\{m,l\}+N_0$ and, in case $|m-l|>N_0$, also $|j-\max\{m,l\}|\le N_0$; let $I_j$ be the set of pairs $(m,l)$ allowed by these conditions. By Minkowski's inequality and Fubini's theorem,
\begin{align*}
    \Big(\int\|\Delta_j(\xi^zv)\|_{L_p}^pdz\Big)^{\frac1p}\le N\sum_{(m,l)\in I_j}\Big(\int\|\Delta_m\xi^z\,\Delta_lv\|_{L_p}^pdz\Big)^{\frac1p}=N\sum_{(m,l)\in I_j}\|\Delta_m\xi\|_{L_p}\|\Delta_lv\|_{L_p}.
\end{align*}
Hence $2^{j\nu}(\int\|\Delta_j(\xi^zv)\|_{L_p}^pdz)^{1/p}\le\sum_lK_{jl}2^{l\nu}\|\Delta_lv\|_{L_p}$ with $K_{jl}:=N2^{(j-l)\nu}\sum_{m:(m,l)\in I_j}2^{-mM}$. Checking the three cases ($l>m+N_0$, so that $|j-l|\le N_0$; $m>l+N_0$, so that $|j-m|\le N_0$; $|m-l|\le N_0$, so that $j\le l+2N_0$) one finds $K_{jl}\le N2^{-(M-|\nu|)|j-l|}$. Since $\sup_j\sum_lK_{jl}+\sup_l\sum_jK_{jl}<\infty$, the matrix $(K_{jl})$ is bounded on $\ell^r$, and $\|\Xi v\|_{\bar B^\nu_{p,r}}\le C\|v\|_{B^\nu_{p,r}}$. Conversely, let $R\bar F:=\int\xi^z\bar F^zdz$. By H\"older's inequality in $z$, $\|\int\Delta_m\xi^z\,\Delta_l\bar F^zdz\|_{L_p}\le\|\Delta_m\xi\|_{L_{p'}}(\int\|\Delta_l\bar F^z\|_{L_p}^pdz)^{1/p}$, and $\Delta_jR\bar F=\sum_{(m,l)\in I_j}\Delta_j\int\Delta_m\xi^z\,\Delta_l\bar F^zdz$; the same summation gives $\|R\bar F\|_{B^\nu_{p,r}}\le C\|\bar F\|_{\bar B^\nu_{p,r}}$. Since $R\Xi v=\|\xi\|_{L_2}^2v$, the lower bound follows when $\xi\not\equiv0$. The remaining equivalences follow by applying these estimates at each fixed time and to time increments.
\end{proof}
\section*{Acknowledgment}
KL has been funded by the Engineering \& Physical Sciences Research Council (EPSRC) Grant EP/Y016955/1.  CL is supported by Deutsche Forschungsgemeinschaft (DFG) - Projektnummer 563883019.
We are grateful to Peter Friz (TU Berlin) for fruitful discussions and suggestions.
The revision of this manuscript was carried out with the assistance of Claude, an AI model developed by Anthropic, which was used for proofreading, for checking arguments and literature references, and for drafting parts of several proofs and remarks. All such contributions were reviewed by the authors, who take full responsibility for the content of the paper.  The motivations, key ideas and
results  are the sole outcomes from  authors.
\bibliographystyle{alpha}
\bibliography{references}
\end{document}